\newcommand{\norm}[1]{\ensuremath{\left\| #1 \right\|}}
\newcommand{\bracket}[1]{\ensuremath{\left[ #1 \right]}}
\newcommand{\braces}[1]{\ensuremath{\left\{ #1 \right\}}}
\newcommand{\tr}[1]{\mathrm{tr}\ensuremath{\negthickspace\bracket{#1}}}
\newcommand{\trs}[1]{\mathrm{tr}\ensuremath{[#1]}}
\newcommand{\deriv}[2]{\ensuremath{\frac{\partial #1}{\partial #2}}}
\newcommand{\SO}{\ensuremath{\mathsf{SO(3)}}}
\newcommand{\T}{\ensuremath{\mathsf{T}}}
\newcommand{\so}{\ensuremath{\mathfrak{so}(3)}}
\renewcommand{\Re}{\ensuremath{\mathbb{R}}}
\newcommand{\Sph}{\ensuremath{\mathsf{S}}}
\newcommand{\Q}{\ensuremath{\mathsf{Q}}}
\newcommand{\dexp}{\mathrm{dexp}}
\newcommand{\E}{\mathrm{E}}
\newcommand{\ad}{\ensuremath{\mathrm{ad}}}
\DeclareMathOperator*{\argmax}{arg\,max}
\DeclareMathOperator*{\argmin}{arg\,min}
\newcommand{\fm}{\ensuremath{\mathcal{E}}}
\date{}
\newtheorem{definition}{Definition}[section]
\newtheorem{lem}{Lemma}[section]
\newtheorem{prop}{Proposition}[section]
\newtheorem{theorem}{Theorem}[section]
\title{On the Observability of Attitude with Single Direction Measurements}
\author{Weixin Wang, Kanishke Gamagedara, and Taeyoung Lee
\thanks{W. Wang, K. Gamagedara, and T. Lee are with the Department of Mechanical and Aerospace Engineering,  The George Washington University, Washington DC, USA.
{\tt\small  \{wwang442,kanishkegb,tylee\}@gwu.edu}}%
\thanks{This research has been supported in part by NSF under the grant CNS-1837382, and by AFOSR under the grant FA9550-18-1-0288.}}
\begin{document}
\allowdisplaybreaks

\maketitle

\begin{abstract}
The attitude of a rigid body evolves on the three-dimensional special orthogonal group,
and it is often estimated by measuring reference directions, such as gravity or magnetic field, using an onboard sensor.
As a single direction measurement provides a two-dimensional constraint, it has been widely accepted that at least two non-parallel reference directions should be measured, or the reference direction should change over time, to determine the attitude completely.
This paper uncovers an intriguing fact that the attitude can actually be estimated by using multiple measurements of a single, fixed reference direction, provided that the angular velocity and the direction measurements are resolved in appropriate frames, respectively. 
More specifically, after recognizing that the attitude uncertainties propagated over the left-trivialized stochastic kinematics are distinct from those over the right-trivialized one, stochastic attitude observability with single direction measurements is formulated by an information theoretic analysis. 
These are further illustrated by numerical simulations and experiments. 
\end{abstract}

\section{Introduction}

The attitude of a rigid body is the orientation of its body-fixed frame relevant to another reference frame. 
It is defined by the coordinates of each body-fixed axis resolved in the reference frame, leading to the three-by-three orthogonal matrix with determinant one, namely a rotation matrix in the three-dimensional special orthogonal group, $\SO$.
The attitude of a rigid body is often determined by measuring a set of directions that are known in the reference frame, which is referred to as the inertial frame throughout the remainder of this paper. 
For example, an accelerometer attached to an aerial vehicle measures the direction of gravity, and a star tracker on a satellite provides the direction to distant stars. 
These measurements are resolved in the body-fixed frame, which are related to the given coordinates of the reference directions in the inertial frame through the rotation matrix. 
As such, the attitude can be determined by comparing the measured coordinates in the body-fixed frame against the known coordinates in the inertial frame. 
In particular, it is formulated as an optimization problem to minimize the weighted sum of residual errors~\cite{WahSR65}.
This is referred to as Wahba's problem and it has been addressed by various methods~\cite{keat1977analysis,ShuOhJGCD81,MarJAS88}.

The attitude of a rigid body has three degrees of freedom, but a single vector measurement provides a two-dimensional constraint to the attitude.
Therefore, it is well known that at least two direction measurements are required to determine the attitude completely. 
More specifically, if only one reference direction is measured, then any rotation about this vector does not change the measurement, which means the one-dimensional rotation about the reference vector cannot be determined from the measurement. 

However, this does not consider the time evolution of the attitude described by the attitude kinematics. 
Therefore, it remains to check if the attitude can be completely estimated when the fixed reference direction is repeatedly measured together with the angular velocity, after extending the attitude estimation problem into a state estimation of a dynamical system. 
A wide variety of attitude estimators have been developed utilizing unit quaternions, and in particular, the multiplicative extended Kalman filter (MEKF) \cite{LefMarJGCD82} has been successfully applied to various space missions. 
In \cite{barrau2016invariant}, invariant extended Kalman filters are proposed for general Lie groups, and its stability is analyzed. 
Further, a Bayesian attitude estimator is proposed in \cite{LeeITAC18}, which is based on the matrix Fisher distribution defined directly on the special orthogonal group.
In the deterministic sense, a complementary attitude observer has been developed on the special orthogonal group~\cite{MahHamITAC08}.

However, even in these attitude estimators incorporating the attitude kinematics, it has been widely accepted that the rotation about the single reference vector still remains unobservable.
An alternative way to estimate the complete attitude with a single direction measurement is assuming that the reference direction is time-varying in the inertial frame~\cite{batista2012ges,grip2011attitude,LeeLeoPACC07}.
It is also proposed that the complete attitude can be estimated by utilizing a gyroscope that is accurate enough to capture the angular velocity of the Earth~\cite{reis2018nonlinear}.

This paper presents a new perspective to analyze the attitude observability, through which we have discovered two additional cases where the attitude can be estimated completely using a single fixed reference direction.
This is based on a careful analysis regarding how the attitude uncertainties are propagated through the stochastic differential equation on the special orthogonal group representing the attitude kinematics. 
In particular, it is shown that using the left-trivialization to transfer the tangent space of a Lie group to its Lie algebra is not equivalent to the right-trivialization in stochastic settings, in contrast to deterministic settings. 
More specifically for attitude uncertainty propagation, the uncertainty propagated by the angular velocity resolved in the body-fixed frame is distinct from the one propagated by the angular velocity resolved in the inertial frame. In the former, the direction of one-dimensional ambiguity caused by a single direction measurement remains fixed in the inertial frame, and for the latter, it is fixed in the body-fixed frame instead.

\begin{table}
	\caption{\label{table:observability} Attitude observability with a single vector measurement}
	\centering
	\begin{tabular}{l|cc}
		\diagbox[width=10em]{ref. vec.}{ang. vel.} & body-fixed frame & inertial frame \\ \hline
		body-fixed frame & fully observable & not fully observable \\
		inertial frame & not fully observable & fully observable
	\end{tabular}
\end{table}

This explains the fundamental reason why the attitude is not observable through a single inertial reference direction and the angular velocity measured by a gyroscope: the direction of ambiguity caused by the inertial reference direction measurement remains unchanged by the angular velocity resolved in the body-fixed frame.
This leads to two strategies to achieve the complete attitude observability with single direction measurements, namely utilizing the angular velocity resolved in the inertial frame such that the direction of ambiguity is rotated over propagation, or measuring a reference direction fixed to the body such that the next measurement can resolve the ambiguity.
These results are summarized in \Cref{table:observability}.

This observation is more formally studied by introducing the stochastic attitude observability. 
Because the observability of attitude critically depends on how uncertainties propagate, we study the attitude observability in stochastic estimators, instead of the convergence problem of deterministic observers.
Observability in stochastic systems is distinct from deterministic systems, since the state can never be exactly recovered from the measurement due to the presence of noise.
In stochastic estimators, the information carried by the measurement is incorporated into estimates by calculating the distribution of the state conditioned on the measurements.
Therefore one of the criteria for stochastic observability is whether the state is dependent of the measurement, which is quantified by their mutual information~\cite{mohler1988nonlinear,liu2011stochastic}.
Another similar notion of stochastic observability is given by the Fisher information of the state, whose inverse yields a lower bound for the variance of all unbiased estimators, known as the Cram\'{e}r--Rao bound.
The observability can be defined by the condition that the Fisher information matrix is positive-definite~\cite{mohler1988nonlinear}.

In this paper, we will use the positive definiteness of the Fisher information matrix to indicate whether the full attitude is observable.
However, the state considered here is the attitude of a rigid body, which resides in a non-Euclidean manifold.
Thus the usual definition of Fisher information for Euclidean random vectors cannot be directly applied.
The Fisher information of random elements on Lie groups or arbitrary manifolds, and its associated Cram\'{e}r--Rao inequality have been studied in \cite{smith2005covariance,Chi12,hendriks1991cramer} with varying levels of generality,
which have been adopted in a stochastic attitude filter with two vector measurements~\cite{bonnabel2015intrinsic}.
We will use the development in~\cite{smith2005covariance} to calculate the Fisher information for attitude in this paper, 

Moreover, for a given probability density function on $\SO$, the attitude that minimizes the mean squared error may not be unique~\cite{moakher2002means,pennec2006intrinsic}.
This gives an alternative characterization of the attitude observability, since if multiple attitudes can minimize the mean squared error, it indicates deficiency of information to distinguish these attitudes.
We will show that this criterion of observability is the same as the Fisher information criterion if the attitude is assumed to follow matrix Fisher distribution, and the unobserved degree of freedom agrees between the two criteria.

The remainder of this paper is organized as follows.
After reviewing mathematical preliminaries in Section II, we study how the attitude uncertainty evolves in Section III, followed by characterizing the posterior attitude distribution conditioned by a single direction measurement.
Next, we introduce a formal definition of the observability of attitude, and we present the results in \Cref{table:observability}, respectively in Section V and Section VI.
These are followed by numerical simulations, experimental results, and conclusions. 

\section{Mathematical Preliminary}

\subsection{Attitude Kinematics}

Consider the attitude of a rigid body. 
We define an inertial frame $\mathcal{I}=\{\mathbf{e}_1, \mathbf{e}_2, \mathbf{e}_3\}$, where $\mathbf{e}_i$ denotes the vector for the $i$-th axis of $\mathcal{I}$. 
Throughout this paper, we distinguish a vector from its coordinates resolved in a selected basis. 
For example, the coordinates of $\mathbf{e}_i$ in $\mathcal{I}$ is denoted by $e_i\in\Re^{3}$, e.g., $e_1=(1,0,0)$. 
Similarly, we define a body-fixed frame, $\mathcal{B}=\{\mathbf{b}_1, \mathbf{b}_2, \mathbf{b}_3\}$.
The attitude of the rigid body is the orientation of $\mathcal{B}$ relative to $\mathcal{I}$, and it can be defined by a rotation matrix $R\in\Re^{3\times 3}$ defined such that its $i,j$-th element is given by
\begin{align*}
    R_{ij} = \mathbf{e}_i \cdot \mathbf{b}_j.
\end{align*}
The above implies that the $j$-th column of $R$ corresponds to the coordinates of $\mathbf{b}_j$ resolved in $\mathcal{I}$, and the $i$-th row of $R$ corresponds to the coordinates of $\mathbf{e}_i$ resolved in $\mathcal{B}$.
Furthermore, $R$ is the linear transformation of the coordinates of a vector from $\mathcal{B}$ to $\mathcal{I}$. 
As both of $\mathcal{I}$ and $\mathcal{B}$ are right-handed, orthonormal frames, the rotation matrix evolves on the special orthogonal group,
\begin{align*}
    \SO = \{ R\in\Re^{3\times 3}\,|\, R^T R = I_{3\times 3},\; \mathrm{det}[R]=1\}.
\end{align*}

Next, let $\mathbf{w}$ be the angular velocity vector of the rigid body, or equivalently, $ \dot{\mathbf{b}}_i = \mathbf{w}\times \mathbf{b}_i$ for all $i\in\{1,2,3\}$. 
As the coordinates of $\mathbf{b}_i$ in $\mathcal{I}$ is $Re_i$, it implies $\dot R e_i =  \omega\times R e_i$, where $\omega\in\Re^3$ is the coordinates of $\mathbf{w}$ in $\mathcal{I}$.
Thus,
\begin{align}
    \dot R = \hat \omega R,\label{eqn:R_dot_w}
\end{align}
where the hat map $\wedge:\Re^3\rightarrow \so$ is defined such that $\hat x y  = x\times y$ for any $x,y\in\Re^3$, and $\so$ denotes the Lie algebra composed of $3\times 3$ skew-symmetric matrices $\so = \{S\in\Re^{3\times 3}\,|\, S^T = -S\}$. 
Alternatively, let $\Omega = R^T \omega\in\Re^3$ be the coordinates of $\mathbf{w}$ in $\mathcal{B}$.
As $\widehat{Rx} = R x R^T$ for any $R\in\SO$ and $x\in\Re^3$, \eqref{eqn:R_dot_w} can be rewritten as
\begin{align}
    \dot R = R\hat\Omega.\label{eqn:R_dot_W}
\end{align}

Both of \eqref{eqn:R_dot_w} and \eqref{eqn:R_dot_W} ensure that their solutions evolve on $\SO$, or $\dot R$ belongs to the tangent space of $\SO$ at $R$, namely $\T_R\SO$.
Right-multiplying \eqref{eqn:R_dot_w} with $R^T$, we obtain $\hat\omega = \dot R R^T$, which is referred to as \textit{right-trivialization} of the tangent space to $\so$. 
Similarly, $\hat\Omega = R^T \dot R$ is referred to as \textit{left-trivialization}.
Given $\Omega = R^T w$, \eqref{eqn:R_dot_w} is equivalent to \eqref{eqn:R_dot_W}.

\subsection{Matrix Fisher Distribution}\label{sec:MF}

The matrix Fisher distribution is an exponential density formulated for random matrices on Stiefel manifolds~\cite{DowB72,KhaMarJRSSS77}, and stochastic properties of the matrix Fisher distribution specifically on $\SO$ are presented in~\cite{LeeITAC18}.
In this subsection, we summarize selected properties of the matrix Fisher distribution, and we derive additional results required in this paper. 

\begin{definition}{(Matrix Fisher Distribution)}
    A random matrix $R\in\SO$ is distributed according to the matrix Fisher distribution with the matrix parameter $F\in\Re^{3\times 3}$,
    or $R\sim \mathcal{M}(F)$, if its probability density is given by
    \begin{equation}
        p(R) = \frac{1}{c(F)} \exp (\trs{F^T R}),\label{eqn:pMF}
    \end{equation}
    where $c(F) = \int_{R\in\SO} \exp(\trs{F^TR}) dR \in\Re$ denotes the normalizing constant. 
\end{definition}

There are nine free parameters in $F$ to characterize the distribution of three-dimensional attitude. 
The role of $F$ in specifying the shape and dispersion of the distribution can be described after decomposing it into the proper singular value decomposition~\cite{MarJAS88} as follows: 
\begin{align}
	F= U SV^T,\label{eqn:USVp}
\end{align}
where $U,V\in\SO$ and $S\in\Re^{3\times 3}$ is a diagonal matrix $S=\mathrm{diag}[s_1,s_2,s_3]$ with $s_1\geq s_2 \geq |s_3| \geq 0$. 
This is a variation of the common singular value decomposition, defined to ensure $U,V\in\SO$ while allowing the last singular value to be negative. 

First, we formulate the mean attitude. 
Let the first moment of $R\sim\mathcal{M}(F)$ be denoted by $\fm[F] = \E[R] \in\Re^{3\times 3}$.
According to~\cite{LeeITAC18}, we have
\begin{align}
    \fm[F] =   U \left( \frac{1}{c(S)} \mathrm{diag}\!\left [ \deriv{c(S)}{s_1}, \deriv{c(S)}{s_2}, \deriv{c(S)}{s_3} \right] \right) V^T.\label{eqn:M}
\end{align}
As shown above, the arithmetic mean $\mathrm{E}[R]$ does not necessarily belong to $\SO$. 
As such, the \textit{mean attitude} is formulated as the attitude that maximizes the density function or that minimizes the Frobenius mean squared error \cite{moakher2002means}, given by
\begin{align*}
    \mathrm{M}[F] = UV^T,
\end{align*}
which is further discussed in \Cref{lem:MMSE} later.

Next, we define the principal axes. 
Consider rotating the mean attitude by an angle $\theta_i\in[0,2\pi)$ as follows:
\begin{align*}
   	R(\theta_i) & = U \exp(\theta_i\hat e_i) V^T \\
                   &= UV^T \exp(\theta_i \widehat{Ve_i}) = \exp(\theta_i \widehat{Ue_i}) UV^T.
\end{align*}
All of the above expressions are equivalent: they correspond to the rotation of the mean attitude $UV^T$ by the angle $\theta_i$ about the axis, which is $Ue_i$ when resolved in the inertial frame, or equivalently $Ve_i= (UV^T)^T Ue_i$  when resolved in the body-fixed frame of the mean attitude. 
Then we have
\begin{align} \label{eqn:pa_density}
	p(R(\theta_i)) = \frac{ e^{s_i} }{c(S)} \exp((s_j+s_k)\cos\theta_i),
\end{align}
where $(i,j,k)\in\{(1,2,3),(2,3,1),(3,1,2)\}$. 
As such, the rotation about the $i$-th axis is more concentrated as $s_j+s_k$ increases, and
this axis is referred to as the $i$-th principal axis of the distribution. 

In summary, the interpretation of the matrix parameter $F=USV^T$ in specifying the shape and dispersion of the matrix Fisher distribution is as follows: 
\begin{itemize}
    \item $UV^T\in\SO$: mean attitude
    \item $U\in\SO$: linear transformation of the coordinates of a vector from the principal axes frame to the inertial frame
    \item $V\in\SO$: linear transformation of the coordinates of a vector from the principal axes frame to the body-fixed frame of the mean attitude
    \item $s_1,s_2,s_3\in\Re^3$: degree of concentration; the rotation about the $i$-th principal axis is more concentrated as $s_j+s_k$ is increased.
\end{itemize} 

Next, we present the maximum likelihood estimation (MLE) for the matrix Fisher distribution.
For a given set of samples $\{R_i\}_{i=1}^N$, let $\bar R$ be the arithmetic mean or the first moment. 
The corresponding MLE of the parameter is given by $F = \fm^{-1}[\bar R]$, where the inverse of $\fm$ is defined as follows. 
Let $UDV^T$ be the proper singular value decomposition of $\bar{R}$, where $D=\mathrm{diag}[d_1,d_2,d_3]\in\Re^{3\times 3}$.
The corresponding diagonal matrix $S=\mathrm{diag}[s_1,s_2,s_3]$ is constructed by solving
\[
    \frac{1}{c(S)}\frac{\partial c(S)}{\partial s_i} = d_i,
\]
for $(s_1,s_2,s_3)$. 
Then $F = \fm^{-1}[\bar{R}] = USV^T$.
In other words, we can construct a matrix Fisher distribution from the first moment of a random rotation matrix using the MLE.

Finally, we present a relationship between the concentration parameter $S$, and the matrix $D$ associated with the first moment.

\begin{lem} \label{lemma:SD}
    Suppose a random rotation matrix $Q\in\SO$ is distributed according to $Q\sim\mathcal{M}(S)$ for
    $S = \mathrm{diag}[s_1,s_2,s_3]$.
    Based on \eqref{eqn:M}, $\E[Q]$ is diagonal as $S$ is. 
    Let $\E[Q] = D = \mathrm{diag}[d_1,d_2,d_3]$.
    Then, the following properties hold:
    \begin{enumerate}
        \item $s_i+s_j=0$ if and only if $d_i+d_j=0$; 
        \item $s_i=s_j=0$ if and only if $d_i=d_j=0$;
        \item $d_i+d_j$ is monotonically increasing with $s_i+s_j$, 
    \end{enumerate}
    for any $(i,j,k)\in\{(1,2,3),(2,3,1),(3,1,2)\}$.
\end{lem}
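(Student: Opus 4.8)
The plan is to pass to the unit-quaternion double cover of \SO\ and reduce everything to an exponentially tilted Dirichlet distribution on the $3$-simplex, where all three claims become transparent. Write $R=R(q)$ for a unit quaternion $q=(q_0,q_1,q_2,q_3)\in S^3$, so that $R_{ii}=1-2\sum_{l\neq 0,i}q_l^2$ and integration against the Haar measure on \SO\ lifts to integration against the uniform measure on $S^3$. A direct substitution gives
\[
  \trs{SR(q)} = (s_1+s_2+s_3) - 2\sum_{l=1}^{3}\sigma_l\, q_l^2,\qquad \sigma_l := s_m+s_n,
\]
where $(l,m,n)$ is a cyclic permutation of $(1,2,3)$; note $\sigma_l$ equals $s_i+s_j$ exactly when $l=k$. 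Setting $a_l:=\E[q_l^2]$, the same substitution yields the bookkeeping identities $R_{ii}+R_{jj}=2(q_0^2-q_k^2)$ and $R_{ii}=2(q_0^2+q_i^2)-1$, hence
\[
  d_i+d_j = 2(a_0-a_k),\qquad d_i = 2(a_0+a_i)-1.
\]

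Under the density $\propto\exp(\trs{SR})$, the squared components $x_l:=q_l^2$ (with $x_0+x_1+x_2+x_3=1$) follow a symmetric $\mathrm{Dirichlet}(1/2,\dots,1/2)$ law tilted by $\exp(-2\sum_l\sigma_l x_l)$, where I set $\sigma_0:=0$. The base law is invariant under every permutation of the four coordinates, and the tilt couples only to $(\sigma_1,\sigma_2,\sigma_3)$, which form an invertible linear image of $(s_1,s_2,s_3)$, so these may be treated as free parameters. The ``if'' directions of parts (1) and (2) then follow from pure symmetry. When $s_i+s_j=\sigma_k=0$ the tilt involves neither $x_0$ nor $x_k$, so the swap $x_0\leftrightarrow x_k$ preserves the measure and forces $a_0=a_k$, i.e. $d_i+d_j=0$; this proves the forward part of (1). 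When $s_i=s_j=0$ one has in addition $\sigma_i=\sigma_j$, so the swaps $x_0\leftrightarrow x_k$ and $x_i\leftrightarrow x_j$ both preserve the measure, giving $a_0=a_k$ and $a_i=a_j$; with $\sum_l a_l=1$ this yields $a_0+a_i=1/2$, i.e. $d_i=d_j=0$, proving the forward part of (2).

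The converse directions and the monotonicity in part (3) reduce to a single strict-monotonicity statement. Differentiating an expectation in a tilt parameter produces a covariance, e.g.
\[
  \deriv{}{\sigma_k}\,\E[x_0-x_k] = -2\,\mathrm{Cov}(x_0-x_k,\,x_k) = 2\big(\mathrm{Var}(x_k)-\mathrm{Cov}(x_0,x_k)\big),
\]
and likewise $\partial_{\sigma_i}\E[x_i-x_j] = -2(\mathrm{Var}(x_i)-\mathrm{Cov}(x_i,x_j))$. Thus everything hinges on the sign of $\mathrm{Cov}(x_a,x_b)$ for distinct coordinates. I will show that distinct coordinates are \emph{negatively correlated} under the tilted law, which makes each bracket strictly positive (the variances are strictly positive by non-degeneracy). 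Granting this, $\E[x_0-x_k]$ is strictly increasing in $\sigma_k=s_i+s_j$ with $\sigma_i,\sigma_j$ held fixed, which is exactly part (3); and since it vanishes precisely at $\sigma_k=0$ by the symmetry above, $d_i+d_j=0\Rightarrow s_i+s_j=0$, completing (1). For (2), $d_i=d_j=0$ gives $d_i+d_j=0$ and $d_i-d_j=2(a_i-a_j)=0$; the former forces $\sigma_k=0$ by (1), while the strict monotonicity of $\E[x_i-x_j]$ in $\sigma_i-\sigma_j$ forces $\sigma_i=\sigma_j$, and $\sigma_k=0$ together with $\sigma_i=\sigma_j$ is equivalent to $s_i=s_j=0$.

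The main obstacle is the negative-correlation claim, since the tilted Dirichlet density $\propto\prod_l x_l^{-1/2}\exp(-2\sigma_l x_l)$ is log-convex rather than log-concave, so standard log-concavity arguments do not apply. The cleanest route I see is to represent the law as independent $\mathrm{Gamma}(1/2)$ variables $X_l$ with rates $2(\sigma_l+C)$, for a constant $C$ large enough to make all rates positive (on the simplex the common shift $C$ cancels), conditioned on $\sum_l X_l=1$. Independent variables conditioned on their sum are negatively associated, so any two coordinates satisfy $\mathrm{Cov}(X_a,X_b\mid\sum_l X_l=1)\le 0$, which is all that is needed. I would either invoke this negative-association fact directly or, since only pairwise covariances are required, verify the inequality by a short coupling argument.
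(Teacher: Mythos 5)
Your reduction to the quaternion double cover is correct as far as it goes: the identities $\trs{SR(q)}=(s_1+s_2+s_3)-2\sum_l\sigma_l q_l^2$, $d_i+d_j=2(a_0-a_k)$ and $d_i=2(a_0+a_i)-1$ all check out, the squared quaternion components are indeed a tilted $\mathrm{Dirichlet}(\tfrac12,\dots,\tfrac12)$, and the exchangeability argument for the forward implications of (1) and (2) is clean and complete. The gap is exactly the step you flag as the main obstacle, and your proposed escape does not close it. The theorem that independent variables conditioned on their sum are negatively associated is not unconditional: it requires the individual densities to be log-concave ($\mathrm{PF}_2$), which is the hypothesis behind the Efron/Joag-Dev--Proschan machinery. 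Your $\mathrm{Gamma}(1/2)$ factors have density $\propto x^{-1/2}e^{-\lambda x}$, which is log-convex --- the very obstruction you identified for the Dirichlet density, merely relocated. Worse, the pairwise inequality you need appears to actually fail for some parameters: take $\lambda_i=\lambda_j=\Lambda$ large and $\lambda_0\neq\lambda_k$; integrating out $x_0,x_k$ leaves a marginal for $(x_i,x_j)$ proportional to $x_i^{-1/2}x_j^{-1/2}e^{-\Lambda(x_i+x_j)}h(x_i+x_j)$ with $h(s)\propto e^{-\lambda_k(1-s)}\,\E[e^{-(1-s)(\lambda_0-\lambda_k)B}]$ for $B\sim\mathrm{Beta}(\tfrac12,\tfrac12)$, and $h$ is strictly log-convex in $s$ whenever $\lambda_0\neq\lambda_k$ (it is a moment generating function of a non-degenerate variable). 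On the small box carrying essentially all the mass this makes the pair $\mathrm{MTP}_2$, hence positively correlated. In the original variables this occurs for $S=\mathrm{diag}[a,a,M]$ with $a\neq 0$ and $M$ large, which gives $\mathrm{Cov}(x_1,x_2)>0$ --- one of the pairs your argument requires to be nonpositive. Without the covariance sign, your strict monotonicity, and with it part (3) and the converse halves of (1) and (2), is unproven.

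The repair is to differentiate along a different direction, and it is instructive to compare with the paper's proof, which establishes (1) and (2) from explicit Bessel-function integral representations of $\partial c/\partial s_i+\partial c/\partial s_j$ (Theorem 2.1 of \cite{LeeITAC18}, cf.\ \eqref{eqn:dcdsi+j}) and proves (3) by setting $\alpha=s_i+s_j$ with $s_i-s_j$ and $s_k$ held fixed, so that $\partial_\alpha(d_i+d_j)=\tfrac12\mathrm{Var}(Q_{ii}+Q_{jj})$ --- a pure variance, with no covariance-sign input. In your coordinates this is the direction $d\sigma_k=2\,d\sigma_i=2\,d\sigma_j$, along which $\partial\,\E[x_0-x_k]=-2\,\mathrm{Cov}\bigl(x_0-x_k,\;x_k+\tfrac12(x_i+x_j)\bigr)=\mathrm{Var}(x_0-x_k)>0$ because $x_k+\tfrac12(x_i+x_j)=\tfrac12+\tfrac12(x_k-x_0)$ on the simplex; the analogous direction for (2) gives $(\partial_{\sigma_j}-\partial_{\sigma_i})\,\E[x_i-x_j]=2\,\mathrm{Var}(x_i-x_j)>0$. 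Combined with your (correct) symmetry facts that $\E[x_0-x_k]$ vanishes when $\sigma_k=0$ and $\E[x_i-x_j]$ vanishes when $\sigma_i=\sigma_j$, this recovers the converses and part (3) entirely within your framework --- though the monotonicity so obtained is the paper's version (holding $s_k$ and $s_i-s_j$ fixed) rather than yours (holding $\sigma_i,\sigma_j$ fixed), and the latter remains unsubstantiated.
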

\begin{proof}
	From Theorem 2.1 in \cite{LeeITAC18}, we have 
	\begin{align}
		&\frac{\partial c(S)}{\partial s_i} + \frac{\partial c(S)}{\partial s_j} = \int_{-1}^{1} \frac{1}{2}(1+u)I_0\left[\frac{1}{2}(s_i-s_j)(1-u)\right] \nonumber \\
		&\qquad \times I_1\left[\frac{1}{2}(s_i+s_j)(1+u)\right] \exp(s_ku)\, du \geq 0, \label{eqn:dcdsi+j}
	\end{align}
	where $I_\nu(x) = \int_{0}^{2\pi} \cos(\nu\theta)\exp(x\cos\theta) d\theta$ is the modified Bessel function of the first kind of order $\nu$.
    In particular, $I_0(x) \geq 1$ for any $x$, and $I_1(x)=0$ if and only if $x=0$.
    Also $I_1(x) >0$ if $x>0$.
    
    First, if $s_i+s_j=0$, then $I_1\left[\frac{1}{2}(s_i+s_j)(1+u)\right] = 0$, and therefore $d_i+d_j = \frac{1}{c(S)}\left(\frac{\partial c(S)}{\partial s_i} + \frac{\partial c(S)}{\partial s_j}\right) = 0$.
    On the other hand, if $s_i+s_j>0$, the integrand of \eqref{eqn:dcdsi+j} is strictly positive for $u\in(-1,1]$.
	Therefore, $d_i+d_j>0$.
    As both of $d_i+d_j$ and $s_i+s_j$ are non-negative, it follows that $d_i+d_j =0$ implies $s_i+s_j=0$. 
    Therefore, $s_i+s_j = 0$ if and only if $d_i+d_j=0$.
	
	Next, if $s_i=s_j=0$, then by (15a) and (15b) in \cite{LeeITAC18}, $d_i=d_j=0$.
	On the other hand, suppose at least one of $s_i$ or $s_j$ is nonzero. There are two sub-cases: (i) if $s_i+s_j\neq 0$, then $d_i+d_j\neq 0$ so $d_i$, $d_j$ cannot both be zeros;
	(ii) if $s_i = -s_j \neq 0$, then
	\begin{align*}
		\frac{\partial c(S)}{\partial s_i} = \int_{-1}^1 \frac{1}{4}(1-u) I_1[s_i(1-u)]\exp(s_ku)du
	\end{align*}
    where the integrand has the same sign as $s_i$ over $u\in[-1,1)$.
	So $d_i\neq 0$.
	
    Finally to prove the monotonicity, let $\alpha=s_i+s_j$ and $\beta = s_i-s_j$, which yield $s_i = \frac{1}{2}(\alpha+\beta)$ and $s_j = \frac{1}{2}(\alpha-\beta)$.
	From the chain rule,
	\begin{align}
        \frac{\partial (d_i+d_j)}{\partial \alpha} &= \frac{1}{2} \braces{\deriv{(d_i+d_j)}{s_i} + \deriv{(d_i+d_j)}{s_j}}.\label{eqn:dij_alpha}
    \end{align}
    Since
    \begin{align*}
        d_i + d_j  = \frac{1}{c(S)}\int_{Q\in\SO}(Q_{ii}+Q_{jj}) \exp(\trs{SQ}) dQ,
    \end{align*}
    the above reduces to
    \begin{align*}
        \frac{\partial (d_i+d_j)}{\partial \alpha} & = \frac{1}{2} (\E[(Q_{ii}+Q_{jj})^2] - E[Q_{ii}+Q_{jj}]^2 )
	\end{align*}
	which is positive by the Cauchy-Schwarz inequality.
    Therefore, the monotonicity follows.
\end{proof}

\section{Attitude Uncertainty Propagation}\label{sec:UP}

In this section, we consider stochastic versions of the attitude kinematics equations \eqref{eqn:R_dot_w} and \eqref{eqn:R_dot_W}, where the rotation matrix $R$ is considered as a $\SO$-valued random process, and we study how the uncertainty distribution of $R$ evolves over time. 

\subsection{Stochastic Attitude Kinematics}

The stochastic attitude kinematics equation corresponding to \eqref{eqn:R_dot_w} is given by
\begin{align}
    dR = (\omega(t) dt + H(t) dW)^\wedge R,\label{eqn:dR_w}
\end{align}
where the angular velocity $\omega(t)$ resolved in $\mathcal{I}$ is assumed to be given deterministically as a function of time with an additive noise $H(t)d W$,
which is a Wiener process $W\in\Re^3$ scaled by a matrix $H(t)\in\Re^{3\times 3}$. 
The above stochastic differential equation is defined according to the Stratonovich sense so that the random matrix $R$ evolves on $\SO$ \cite{barrau2018stochastic}. 

Next, the stochastic differential equation corresponding to \eqref{eqn:R_dot_W} is
\begin{align}
    dR = R (\Omega(t) dt + H(t) dW)^\wedge ,\label{eqn:dR_W}
\end{align}
with the angular velocity $\Omega(t)$ resolved in $\mathcal{B}$. 
We call \eqref{eqn:dR_w} and \eqref{eqn:dR_W} right-trivialized and left-trivialized, respectively. 
In the deterministic case, the right-trivialized \eqref{eqn:R_dot_w} and the left-trivialized \eqref{eqn:R_dot_W} are equivalent.
One of the fundamental questions of this paper is whether such equivalence holds in the stochastic case. 
Unfortunately, there is no explicit, analytical solution to either \eqref{eqn:dR_w} or \eqref{eqn:dR_W}. 
Instead, we study how the first moment of $R$ evolves, and interpret its uncertainty utilizing the matrix Fisher distribution.

\subsection{Propagation of First Moments}

First, consider the right-trivialized kinematics \eqref{eqn:dR_w}.
A given first moment $\E[R(t)]$ at $t$ is propagated to $\tau>t$ as follows, with the angular velocity trajectory $\omega(\cdot)$ between $t$ and $\tau$.

\begin{theorem}\label{thm:ER_R}
    Suppose $R$ follows the right-trivialized stochastic differential equation \eqref{eqn:dR_w}.
    For a given $\E[R(t)]$ at $t$, the first moment $\E[R(\tau)]$ at $\tau$ is written as
    \begin{align}
        \E[R(\tau)] & = \Phi_R(\tau, t) \E[R(t)]  + \mathcal{O}((\tau-t)^{2}),\label{eqn:ER_tau_R}
    \end{align}
    where $\Phi_R(\tau,t)\in\Re^{3\times 3}$ is
    \begin{align}
        \Phi_R(\tau,t) & = \exp(\hat\phi_R(\tau,t)) \nonumber\\
        & \quad \times \{I_{3\times 3} + \frac{1}{2}(G_R(\tau,t) - \trs{G_R(\tau,t)}I_{3\times 3})\} .\label{eqn:Phi_R}
    \end{align} 
    And $\hat\phi_R(\tau,t)$ and $G_R(\tau, t)\in\Re^{3\times 3}$ are
    \begin{align}
        \hat\phi_R(\tau, t) & = \int_{t}^\tau \dexp^{-1}_{\hat\phi_R(\sigma, t)} \hat\omega(\sigma) d\sigma, \label{eqn:phi_R}\\
        G_R(\tau, t) & =\int_{t}^\tau \exp(-\hat\phi_R(\sigma,t)) H(\sigma) H^T(\sigma) \exp(\hat\phi_R(\sigma,t))  |d\sigma|.\label{eqn:G}
    \end{align}
\end{theorem}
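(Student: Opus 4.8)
The plan is to reduce the statement to computing the mean of the stochastic flow associated with \eqref{eqn:dR_w}. Because \eqref{eqn:dR_w} acts on $R$ by left multiplication, its solution factors as $R(\tau)=\Xi(\tau,t)\,R(t)$, where $\Xi(\tau,t)\in\SO$ is the flow solving $d\Xi=(\omega\,dt+H\,dW)^\wedge\Xi$ with $\Xi(t,t)=I_{3\times 3}$. The flow $\Xi(\tau,t)$ depends only on the angular velocity and on the Wiener increments over $[t,\tau]$; by the independence of Wiener increments it is independent of $R(t)$, so $\E[R(\tau)]=\E[\Xi(\tau,t)]\,\E[R(t)]$ holds exactly. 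Thus $\Phi_R(\tau,t)$ is nothing but $\E[\Xi(\tau,t)]$, and since $\norm{\E[R(t)]}$ is bounded, it suffices to compute $\E[\Xi(\tau,t)]$ up to $\mathcal{O}((\tau-t)^2)$.

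Next I would peel off the deterministic part. Let $\Xi_{\mathrm{det}}(\tau,t)=\exp(\hat\phi_R(\tau,t))$ denote the flow of the noise-free kinematics $\dot\Xi_{\mathrm{det}}=\hat\omega\,\Xi_{\mathrm{det}}$. Writing this flow in exponential coordinates, the right-trivialized derivative of the matrix exponential gives $\hat\omega=\dexp_{\hat\phi_R}(\dot{\hat\phi}_R)$, hence $\dot{\hat\phi}_R=\dexp^{-1}_{\hat\phi_R}(\hat\omega)$, which integrates to \eqref{eqn:phi_R}. I would then apply variation of parameters, setting $\Xi=\Xi_{\mathrm{det}}\,\Theta$. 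Using the Stratonovich product rule (which obeys the ordinary Leibniz rule) together with the identity $R^T\hat y R=\widehat{R^T y}$ for $R\in\SO$, the deterministic drift cancels and $\Theta$ is found to satisfy the driftless, purely noise-driven equation $d\Theta=(\Xi_{\mathrm{det}}^T H\,dW)^\wedge\,\Theta$ with $\Theta(t)=I_{3\times 3}$.

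The mean of $\Theta$ is then computed directly from this equation. Converting the Stratonovich equation to Itô form adds the quadratic-variation drift $\tfrac12\sum_k B_k^2\,\Theta$, where $B_k=(\Xi_{\mathrm{det}}^T H e_k)^\wedge$; taking expectations kills the martingale term and leaves the linear matrix ODE $\frac{d}{d\tau}\E[\Theta]=\tfrac12\sum_k B_k^2\,\E[\Theta]$ with $\E[\Theta(t)]=I_{3\times 3}$. Invoking $\hat a\hat a=aa^T-\norm{a}^2 I_{3\times 3}$ and $\sum_k e_k e_k^T=I_{3\times 3}$ collapses $\sum_k B_k^2$ to $\Xi_{\mathrm{det}}^T H H^T\Xi_{\mathrm{det}}-\trs{HH^T}I_{3\times 3}$, whose time integral is exactly $G_R-\trs{G_R}I_{3\times 3}$ by \eqref{eqn:G}. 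Solving the ODE to first order yields $\E[\Theta(\tau)]=I_{3\times 3}+\tfrac12(G_R-\trs{G_R}I_{3\times 3})+\mathcal{O}((\tau-t)^2)$, and multiplying on the left by $\Xi_{\mathrm{det}}=\exp(\hat\phi_R)$ reproduces \eqref{eqn:Phi_R}.

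The main obstacle is the stochastic order bookkeeping rather than any single computation. The Wiener increments scale like $\mathcal{O}(\sqrt{\tau-t})$, so the noise influences the mean only at order $\mathcal{O}(\tau-t)$ and only through its quadratic variation; I must verify that the Itô correction captures this contribution in full while all neglected pieces---the higher terms of the exponential-coordinate expansion of $\Theta$, the higher moments of $\Theta$, and the $\mathcal{O}((\tau-t)^2)$ remainder of the ODE---are genuinely second order. A key point making this work is that the odd moments of the centered Wiener increments vanish, so no spurious $\mathcal{O}((\tau-t)^{3/2})$ term survives in $\E[\Theta]$ and the leading correction to the mean is precisely the claimed $\tfrac12(G_R-\trs{G_R}I_{3\times 3})$. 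Some additional care is needed to justify the exponential-coordinate representation of the random flow and the independence factorization within the Stratonovich setting.
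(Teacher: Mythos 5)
Your proposal is correct, and it shares the paper's first key step: both peel off the deterministic rotation $\exp(\hat\phi_R(\tau,t))$ (your $\Xi_{\mathrm{det}}$, via the paper's change of variables $X(\tau)=\exp(-\hat\phi_R(\tau,t))R(\tau)$), reducing the problem to the driftless Stratonovich equation $dX=\{\exp(-\hat\phi_R)H\,dW\}^\wedge X$, which is exactly your equation for $\Theta$ right-multiplied by $R(t)$. Where you genuinely diverge is in computing the mean of this driftless part. The paper stays pathwise: it applies the Magnus expansion to the driftless SDE, writes $X(\tau)=\exp(\hat q_1+\hat q_2+\mathcal{O}((\tau-t)^{1.5}))X(t)$ with iterated stochastic integrals $q_1,q_2$, expands the exponential, and evaluates $\E[q_1]=\E[q_2]=0$ and $\E[\hat q_1^2]=G_R-\trs{G_R}I_{3\times 3}$ term by term. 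You instead convert to It\^o form and pass immediately to the moment equation $\tfrac{d}{d\tau}\E[\Theta]=\tfrac12\sum_k B_k^2\,\E[\Theta]$, which is exact (the martingale term has zero mean since $\Theta$ is bounded on $\SO$ and $B_k$ is deterministic), so the only approximation left is the first-order solution of a deterministic linear ODE; your algebra $\sum_k B_k^2=\exp(-\hat\phi_R)HH^T\exp(\hat\phi_R)-\trs{HH^T}I_{3\times 3}$ checks out, and its time integral is indeed $G_R-\trs{G_R}I_{3\times 3}$. Your route buys a cleaner error analysis --- the half-integer-order bookkeeping you worry about at the end is automatic, since the expectation of the It\^o integral vanishes identically rather than by a parity argument on Wiener moments --- and it makes explicit the independence factorization $\E[R(\tau)]=\E[\Xi(\tau,t)]\,\E[R(t)]$ that the paper uses only implicitly when it multiplies its expansion by $R(t)$ and takes means. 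What the paper's Magnus route buys is a pathwise approximate solution \eqref{eqn:R_tau_R} of the SDE itself, not merely of its first moment, kept within the same exponential-coordinate machinery already set up for $\phi_R$.
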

\begin{proof}
    See Appendix \ref{app:ER}.
\end{proof}

In \eqref{eqn:ER_tau_R}, propagation of the first moment from $t$ to $\tau$ corresponds to left-multiplying $\Phi_R(\tau,t)$ with accuracy up to the first order of $|\tau -t|$, resembling the state transition matrix in linear system dynamics.
Looking at \eqref{eqn:Phi_R}, $\Phi_R$ is composed of two parts: the first exponential term corresponds to the advection, or the rotation of the distribution, due to the angular velocity $\omega$; and the second part in the braces represents the diffusion due to the Wiener process. 

Alternatively, the first moment $\E[R(t)]$ can be propagated over the left-trivialized kinematics \eqref{eqn:dR_W} as follows. 
\begin{theorem}\label{thm:ER_L}
    Suppose $R$ follows the left-trivialized stochastic differential equation \eqref{eqn:dR_W}.
    For a given $\E[R(t)]$ at $t$, the first moment $\E[R(\tau)]$ at $\tau$ is written as
    \begin{align}
        \E[R(\tau)] & = \E[R(t)] \Phi_L(\tau, t) + \mathcal{O}((\tau-t)^{2}),\label{eqn:ER_tau_L}
    \end{align}
    where $\Phi_L(\tau,t)\in\Re^{3\times 3}$ is
    \begin{align}
        \Phi_L(\tau,t) & = \{I_{3\times 3} + \frac{1}{2}(G_L(\tau,t) - \trs{G_L(\tau,t)}I_{3\times 3})\}\nonumber\\
                       & \quad \times \exp(\hat\phi_L(\tau,t)).\label{eqn:Phi_L}
    \end{align} 
    And $\hat\phi_L(\tau,t)$ and $G_L(\tau, t)\in\Re^{3\times 3}$ are
    \begin{align}
        \hat\phi_L (\tau, t) & = \int_{t}^\tau \dexp^{-1}_{-\hat\phi_L(\sigma, t)} \hat\Omega(\sigma) d\sigma, \label{eqn:phi_L}\\
        G_L(\tau, t) & =\int_{t}^\tau \exp(\hat\phi_L(\sigma,t)) H(\sigma) H^T(\sigma) \exp(-\hat\phi_L(\sigma,t))  |d\sigma|.\label{eqn:G_L}
    \end{align}
\end{theorem}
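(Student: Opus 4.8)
The plan is to avoid re-deriving the first-moment expansion from scratch and instead exploit a transpose duality that reduces the left-trivialized \Cref{thm:ER_L} to the already-established right-trivialized \Cref{thm:ER_R}. The starting observation is that if $R$ solves the left-trivialized SDE \eqref{eqn:dR_W}, then $P := R^T$ solves a right-trivialized one. Transposing \eqref{eqn:dR_W} and using $(\hat x)^T = -\hat x$,
\begin{align*}
  dP = (dR)^T = \big((\Omega\,dt + H\,dW)^\wedge\big)^T R^T = (-\Omega\,dt - H\,dW)^\wedge P,
\end{align*}
which is exactly \eqref{eqn:dR_w} for $P$ with angular velocity $-\Omega$ and noise increment $-H\,dW$. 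Since the diffusion enters only through the symmetric product $HH^T$ and $(-H)(-H)^T = HH^T$, the diffusion data are unchanged. This transposition is legitimate at the level of the Stratonovich calculus precisely because Stratonovich integrals obey the ordinary product rule.

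First I would apply \Cref{thm:ER_R} to $P$. Writing $\phi_R^P$ and $G_R^P$ for the quantities in \eqref{eqn:phi_R}--\eqref{eqn:G} evaluated with $\omega \mapsto -\Omega$ and $HH^T$ unchanged, it yields $\E[P(\tau)] = \Phi_R^P(\tau,t)\,\E[P(t)] + \mathcal{O}((\tau-t)^2)$. Because expectation is linear, $\E[P] = \E[R]^T$, so transposing this identity gives $\E[R(\tau)] = \E[R(t)]\,(\Phi_R^P(\tau,t))^T + \mathcal{O}((\tau-t)^2)$, with the remainder still $\mathcal{O}((\tau-t)^2)$ since transposition is a fixed bounded linear map. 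It then remains only to verify that $(\Phi_R^P)^T$ coincides with the claimed $\Phi_L$ of \eqref{eqn:Phi_L}.

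The next step is to identify the transformed integrals. Inserting the ansatz $\hat\phi_R^P = -\hat\phi_L$ into the defining equation \eqref{eqn:phi_R} for $P$, namely $\hat\phi_R^P(\tau,t) = \int_t^\tau \dexp^{-1}_{\hat\phi_R^P(\sigma,t)}(-\hat\Omega(\sigma))\,d\sigma$, reproduces \eqref{eqn:phi_L} verbatim; by uniqueness of the solution of this Volterra-type integral equation (the integrand is smooth in $\hat\phi$, so Picard--Gr\"onwall applies), we conclude $\hat\phi_R^P = -\hat\phi_L$. Feeding $\hat\phi_R^P = -\hat\phi_L$ into \eqref{eqn:G} immediately gives $G_R^P = G_L$ as in \eqref{eqn:G_L}. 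I would then note that $G_L$ is symmetric, since its integrand $\exp(\hat\phi_L)\,HH^T\,\exp(-\hat\phi_L)$ is a rotation conjugate of the symmetric matrix $HH^T$; hence the bracketed diffusion factor $I + \tfrac12(G_L - \trs{G_L}I)$ is symmetric. Using $\exp(-\hat\phi_L)^T = \exp(\hat\phi_L)$,
\begin{align*}
  (\Phi_R^P)^T = \Big(\exp(-\hat\phi_L)\{I + \tfrac12(G_L - \trs{G_L}I)\}\Big)^T = \{I + \tfrac12(G_L - \trs{G_L}I)\}\exp(\hat\phi_L),
\end{align*}
which is exactly \eqref{eqn:Phi_L}, completing the reduction.

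I expect the only genuine obstacle to be the rigorous justification of the transpose step for the Stratonovich SDE and the confirmation that transposition preserves the order of the truncation error; both are handled by the chain-rule property of Stratonovich integrals and the linearity of the map $R \mapsto R^T$. A secondary, purely bookkeeping point is sign tracking through $\dexp^{-1}$, but the key structural feature is that the reversal $\omega \mapsto -\Omega$ is absorbed \emph{inside} the argument of $\dexp^{-1}$, so the verification needs no identity relating $\dexp^{-1}_{X}$ to $\dexp^{-1}_{-X}$ and instead follows from matching integral equations by the uniqueness argument above. Granting \Cref{thm:ER_R}, no new analytic estimate is required.
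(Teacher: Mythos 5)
Your proposal is correct, and it takes a genuinely different route from the paper. The paper omits the proof of \Cref{thm:ER_L} entirely, stating only that it is ``similar to \Cref{thm:ER_R}''; the intended argument is to repeat the Magnus-expansion machinery of Appendix~\ref{app:ER} with the roles of left and right multiplication exchanged (define a right-acting drift-removed process $X(\tau)=R(\tau)\exp(-\hat\phi_L(\tau,t))$, expand, and take expectations). You instead reduce \Cref{thm:ER_L} to \Cref{thm:ER_R} by the transpose duality $P=R^T$, which converts \eqref{eqn:dR_W} into \eqref{eqn:dR_w} with $\omega\mapsto-\Omega$ and unchanged diffusion data $HH^T$. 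All the steps check out: Stratonovich calculus (indeed, mere linearity of transposition) justifies $d(R^T)=(dR)^T$; linearity of $A\mapsto\dexp^{-1}_{C}A$ plus uniqueness of the Volterra integral equation gives $\hat\phi_R^P=-\hat\phi_L$, which correctly reproduces the sign placement in \eqref{eqn:phi_L}; substituting into \eqref{eqn:G} yields exactly \eqref{eqn:G_L}; and the symmetry of $G_L$ together with $\exp(-\hat\phi_L)^T=\exp(\hat\phi_L)$ turns $(\Phi_R^P)^T$ into the claimed \eqref{eqn:Phi_L}, with the $\mathcal{O}((\tau-t)^2)$ remainder preserved under transposition. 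What your approach buys is economy and transparency: no Magnus computation is repeated, and the structural relationship between the two theorems --- in particular why the diffusion factor sits on the left of the exponential in \eqref{eqn:Phi_L} and why the subscript of $\dexp^{-1}$ carries a minus sign in \eqref{eqn:phi_L} --- is made explicit rather than emerging from a parallel calculation. What the paper's (implied) direct approach buys is independence from \Cref{thm:ER_R} and a template that generalizes to situations where the left- and right-trivialized equations are not transposes of one another (e.g., on Lie groups where $g\mapsto g^{-1}$ does not act by a linear map on the matrix entries); on $\SO$ your duality is available precisely because $R^{-1}=R^T$ is linear, so the expectation passes through it.
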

\begin{proof}
    The proof is similar to \Cref{thm:ER_R}, and is omitted for brevity.
\end{proof}
Equation \eqref{eqn:ER_tau_L} is analogous to \eqref{eqn:ER_tau_R}, except that $\Phi_L(\tau,t)$ is acting on the right. 
The interpretation of $\Phi_L(\tau,t)$ in \eqref{eqn:Phi_L} is also similar with \eqref{eqn:Phi_R}: it represents an advection and a diffusion.

\subsection{Differences Between Right- and Left-Trivialization} \label{sec:RL}

\Cref{thm:ER_R} and \Cref{thm:ER_L} describe how the first moment evolves through the right-trivialized \eqref{eqn:dR_w}, and the left-trivialized \eqref{eqn:dR_W}, respectively. 
To understand the difference between them more intuitively, we consider a simpler case when the angular velocity is discretized by a zero-order hold.
Let the time be discretized by a sequence $\{t_k\}_{k=0}^N $ with a fixed time step $h=t_{k+1}-t_k$, and let $\omega(t) = \omega_k$ for any $t\in[t_k,t_{k+1})$. 
Under these assumptions, we can show
\begin{align*}
    \phi_R (t_{k+1}, t_k ) & = h\omega_k,\\
    G_R(t_{k+1}, t_k ) & = h H_k H_k^T,
\end{align*}
where the higher-order terms of $h$ are omitted (see Appendix \ref{app:ER_sim}). 
Further assume $H_k = \gamma I_{3\times 3}$ for $\gamma \in\Re$.
Then, \eqref{eqn:ER_tau_R} is rewritten as
\begin{align}
    \E[R_{k+1}] = (1-h\gamma^2) \exp(h\hat\omega_k) \E[R_{k}],\label{eqn:ER_kp_R}
\end{align}
where the first moment is reduced by the factor $1-h\gamma^2$ and it is rotated about the axis $h\omega_k$.

This is interpreted using the matrix Fisher distribution as follows. 
Suppose $R_k \sim \mathcal{M}(F_k)$ with $F_k = U_k S_k V_k^T\in\Re^{3\times 3}$.
From \eqref{eqn:M}, its first moment is given by $\E[R_k] = U_k D_k V_k^T$, where $D_k = \mathrm{diag}[d_{1},d_{2}, d_{3}]$ with $d_{i} = \frac{1}{c(S_k)}\deriv{c(S_k)}{s_{i}}$ for $i=1,2,3$.
Therefore, \eqref{eqn:ER_kp_R} is rewritten as 
\begin{align*}
    \E[R_{k+1} ] =\exp(h\hat\omega_k) U_k \times (1-h\gamma^2) D_k \times V_k^T,
\end{align*}
which  is already given in a form of the proper singular value decomposition.
Thus, by taking $\fm^{-1}$ of the above, we have $R_{k+1}\sim\mathcal{M}(F_{R_{k+1}})$ with the matrix parameter $F_{R_{k+1}}\in\Re^{3\times 3}$ given by
\begin{align} \label{eqn:F_R_kp}
    F_{R_{k+1}} = U_{R_{k+1}} S_{R_{k+1}} V_{R_{k+1}}^T,
\end{align}
where $U_{R_{k+1}}, V_{R_{k+1}}\in\SO$ are 
\begin{gather}
    U_{R_{k+1}} = \exp(h\hat\omega_k) U_k,\; V_{R_{k+1}} = V_k,\label{eqn:UV_R_kp}
\end{gather}
and the diagonal $S_{R_{k+1}}\in\Re^{3\times 3}$ is defined such that its diagonal elements $s_{R_i}$ satisfy
\begin{align}
    \frac{1}{c(S_{R_{k+1}})} \deriv{ c(S_{R_{k+1}})}{ s_{R_i} } = (1-h\gamma^2) \frac{1}{c(S_k)} \deriv{ c(S_k) }{ s_i},\label{eqn:S_R_kp}
\end{align}
for $i\in\{1,2,3\}$.
Note that here we are matching the true distribution of $R_{k+1}$ to a matrix Fisher distribution using the MLE.

Now, let us compare  $R_k\sim\mathcal{M}(F_k)$ and $R_{k+1}\sim\mathcal{M}(F_{R_{k+1}})$ to understand how the attitude uncertainties are propagated over the right-trivialized \eqref{eqn:dR_w}.
In particular, we focus on the three aspects of the mean attitude, the degree of dispersion, and the principal axes. 
First, the mean attitude is rotated from $\mathrm{M}[F_k] = U_k V_k^T$ into
\begin{align}
    \mathrm{M}[F_{R_{k+1}} ] = U_{R_{k+1}} V_{R_{k+1}}^T = \exp(h\hat\omega_k ) U_k V_k^T ,\label{eqn:M_kp_R}
\end{align}
by the rotation vector $h\omega_k$ resolved in the inertial frame, which is expected.
Next, the uncertainty becomes more dispersed, as $S_{R_{k+1}}$ is reduced from $S_k$ according to \eqref{eqn:S_R_kp} and \Cref{lemma:SD}.
In case there is no noise or diffusion, i.e., $\gamma = 0$, the degree of dispersion remains unchanged as $S_{R_{k+1}} = S_k$.
Finally, the effect of advection on the principal axes are described by \eqref{eqn:UV_R_kp}. 
Since $U_{R_{k+1}} = \exp(h\hat\omega_k) U_k$, the principal axes are rotated by the rotation vector $h\omega_k$ when perceived in the inertial frame.
However, as $V_{R_{k+1}} = V_k$, the principal axes remain unchanged when observed from the body-fixed frame. 
In other words, the shape of uncertainties remains unchanged relative to the body-fixed frame. 
But, as the body-fixed frame is rotated by $h\omega_k$, the distribution is also rotated accordingly in the inertial frame. 

Then, what happens if the left-trivialized \eqref{eqn:dR_W} is used instead?
When $R_k\sim\mathcal{M}(F_k)$ is propagated through \eqref{eqn:dR_W}, we can similarly show that $R_{k+1} \sim \mathcal{M}(F_{L_{k+1}})$ with
\begin{align}
    F_{L_{k+1}} = U_{L_{k+1}} S_{L_{k+1}} V_{L_{k+1}}^T, \label{eqn:F_L_kp}
\end{align}
where $U_{L_{k+1}}, V_{L_{k+1}}\in\SO$ are given by
\begin{gather}
    U_{L_{k+1}} = U_k,\; V_{L_{k+1}} = \exp(-h\hat\Omega_k) V_k,\label{eqn:UV_L_kp}
\end{gather}
and the diagonal $S_{L_{k+1}}\in\Re^{3\times 3}$ is defined the same as \eqref{eqn:S_R_kp}.
First, the mean attitude is
\begin{align}
    \mathrm{M}[F_{L_{k+1}} ] = U_{L_{k+1}} V_{L_{k+1}}^T =  U_k V_k^T \exp(h\hat\Omega_k ),\label{eqn:M_kp_L}
\end{align}
which is obtained by rotating the prior mean $\mathrm{M}[F_k]= U_kV_k^T$ by the vector $h\Omega_k$ resolved in the body-fixed frame. 
In fact, this is equivalent to \eqref{eqn:M_kp_R}, when $\Omega_k = (U_kV_k^T)^T \omega_k$, i.e., they are transformed to each other by the mean attitude.
Next, the effect of diffusion is the same as the right-trivialized case: the uncertainties become more dispersed when $\gamma>0$, and the dispersion is unchanged when $\gamma=0$. 
Finally, while the mean attitude and the degree of dispersion are propagated in the same manner, the principal axes are transformed in the opposite manner.
As $U_{L_{k+1}} = U_k$, the principal axes remain unchanged when observed from the inertial frame. 
But, since $V_{L_{k+1}} = \exp(-h\hat\Omega_k) V_k$, the principal axes are rotated by the rotation vector $-h\Omega_k$ when perceived in the body-fixed frame. 
In other words, the shape of uncertainties represented by the most uncertain rotation or least uncertain rotation remains unchanged relative to the inertial frame.
But, as the body-fixed frame is rotated by $h\Omega_k$, the principal axes are rotated in the opposite way in the body-fixed frame, to keep them unchanged in the inertial frame. 

In short, in the deterministic attitude kinematics, the order of trivialization does not matter, as \eqref{eqn:R_dot_w} is completely equivalent to \eqref{eqn:R_dot_W}. 
However, the equivalence between right and left does not hold in the stochastic attitude kinematics: while the mean attitude and the degree of dispersion are changed in the same way for both of \eqref{eqn:dR_w} and \eqref{eqn:dR_W}, the right-trivialized \eqref{eqn:dR_w} rotates the principal axes in the inertial frame, and the left-trivialized \eqref{eqn:dR_W} rotates them in the body-fixed frame. 
For \eqref{eqn:dR_w}, the principal axes remain unchanged when observed from the body-fixed frame; 
and for \eqref{eqn:dR_W}, they remain unchanged relative to the inertial frame. 
This distinction is entirely attributable to the advection of the prior distribution for $R_k$, and has nothing to do with the diffusion when the noise is isotropic.

\subsection{Numerical Example}

To illustrate these more clearly, we present a numerical example. 
Initially, $R_0 \sim \mathcal{M}(F_0)$ with $F_0 = \mathrm{diag}[150, 10, 0]$, which implies
\begin{align*}
    U_0 = V_0 = I_{3\times 3}, \quad S_0 = \mathrm{diag}[150, 10, 0].
\end{align*}
The initial mean attitude is $\mathrm{M}[F_0]=I_{3\times 3}$, and the first principal axis is $U_0 e_1 = e_1$ when resolved in the inertial frame.
Thus, it is the first base vector $\mathbf{e}_1$ of the inertial frame.
As discussed in \Cref{sec:MF}, since $s_2+ s_3 \leq s_3+s_2 \leq s_1+s_2$, the distribution is most uncertain for the rotation about the first principal axis. 
The corresponding initial distribution is illustrated in \Cref{fig:prop_R_1}, where the three body-fixed axes of the mean attitude are shown by the red (first), green (second), and blue (third) arrows, respectively. 
We also compute the marginal distribution of each body-fixed axis, and visualize it on the unit-sphere via color shading~\cite{LeeITAC18}.
As the first principal axis is $\mathbf{e}_1$, in \Cref{fig:prop_R_1}, the distributions for the second body-fixed axis (green) and the third body-fixed axis (blue) are elongated along the great circle normal to $\mathbf{e}_1$. 
The distributions along the other principal axes $\mathbf{e}_2,\mathbf{e}_3$ are relatively narrower.

We propagate the initial distribution $\mathcal{M}(F_0)$ with a fixed angular velocity $\omega = \frac{\pi}{2}e_3\, \si{\radian\per\second} $ until $t=\SI{1}{\second}$, without any noise.
The four sub-figures in the left column of \Cref{fig:prop} show the distributions propagated by the right-trivialized \eqref{eqn:dR_w}.
First, as time increases, the mean attitude rotates about $\mathbf{e}_3$ according to \eqref{eqn:M_kp_R}, by up to \SI{90}{\degree} at $t=\SI{1}{\second}$.
Next, as discussed above, the principal axes also rotate in the same manner.
For example, the first principal axis rotates from $\mathbf{e}_1$ at $t=\SI{0}{\second}$ to $\mathbf{e}_2$ at $t=\SI{1}{\second}$.
As the principal axes are fixed in the body-fixed frame and the degree of dispersion remains unchanged, it is as if the sphere at \Cref{fig:prop_R_1} literally rotates to \Cref{fig:prop_R_4}, without altering the distribution shown by color marks. 

Next, we propagate the initial distribution with the left-trivialized \eqref{eqn:dR_W}. 
The angular velocity is obtained by transforming the above $\omega$ to the body-fixed frame using the initial mean attitude. 
As the mean attitude is $I_{3\times 3}$, this trivially yields $\Omega= \omega$. 
The four sub-figures in the right column of \Cref{fig:prop} show the corresponding propagated distributions.
The mean attitude rotates in the same way as the right-trivialized case, and the degree of dispersion is unchanged. 
But, the key difference is that the principal axes do not rotate in the inertial frame. 
For instance, the first principal axis is $\mathbf{e}_1$ always.  
Therefore, the distribution of each axis of the mean attitude is always elongated about the corresponding great circle normal to $\mathbf{e}_1$. 
As a result, at the terminal time given in \Cref{fig:prop_L_4}, the second body-fixed axis (green) becomes most concentrated as it is parallel to $\mathbf{e}_1$,
and the distributions of the first body-fixed axis (red) and the third (blue) are elongated along the great circle in the $\mathbf{e}_2$--$\mathbf{e}_3$ plane. 
In summary, the difference between \eqref{eqn:dR_w} and \eqref{eqn:dR_W} is well-illustrated by how the uncertainties are distributed about the mean attitude in \Cref{fig:prop_R_4} and \Cref{fig:prop_L_4}. 

\begin{figure}
	\centerline{
		\subfigure[right-triv. \eqref{eqn:dR_w} at $t=\SI{0}{\second}$]{
			\includegraphics[width=0.45\columnwidth, trim=150 80 130 60, clip]{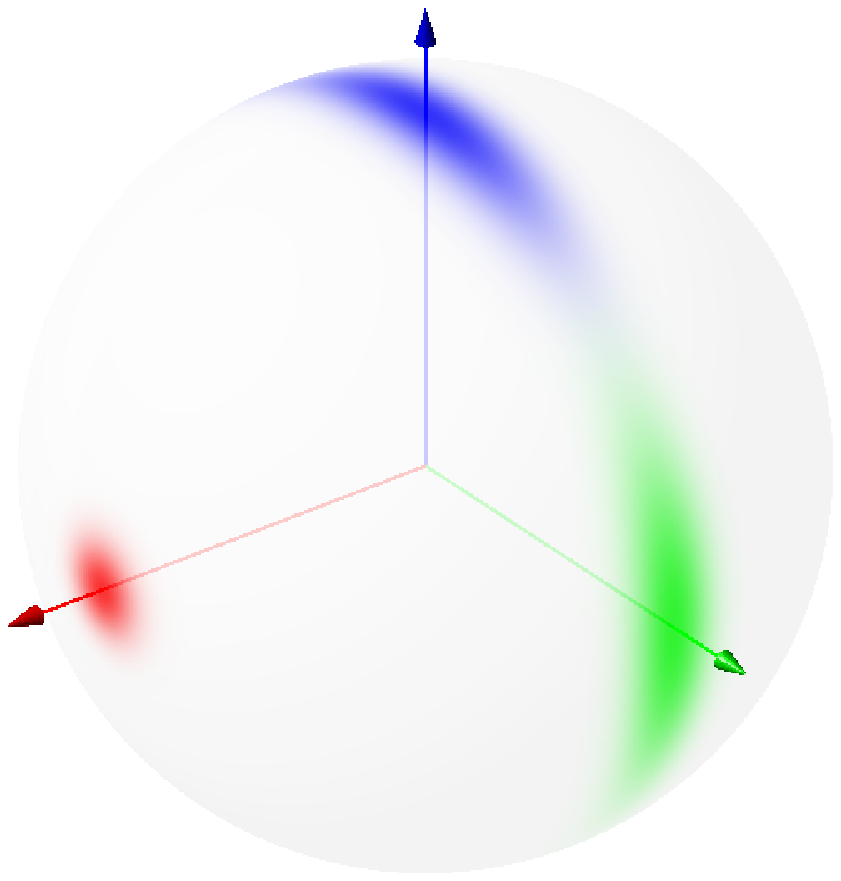}
			\begin{tikzpicture}[overlay]
			\draw[arrows={-Triangle[angle=30:4pt]}] (-2.9,0.25) -- ++(90:0.5);
			\draw[arrows={-Triangle[angle=30:4pt]}] (-2.9,0.25) -- ++(-30:0.5);
			\draw[arrows={-Triangle[angle=30:4pt]}] (-2.9,0.25) -- ++(210:0.5);
			\node at (-3.3,0.2) {\scriptsize $\mathbf{e}_1$};
			\node at (-2.5,0.2) {\scriptsize $\mathbf{e}_2$};
			\node at (-2.9,0.85) {\scriptsize $\mathbf{e}_3$};
			\end{tikzpicture}
			\label{fig:prop_R_1}
		}
		\subfigure[left-triv. \eqref{eqn:dR_W} at $t=\SI{0}{\second}$]{
			\includegraphics[width=0.45\columnwidth, trim=150 80 130 60, clip]{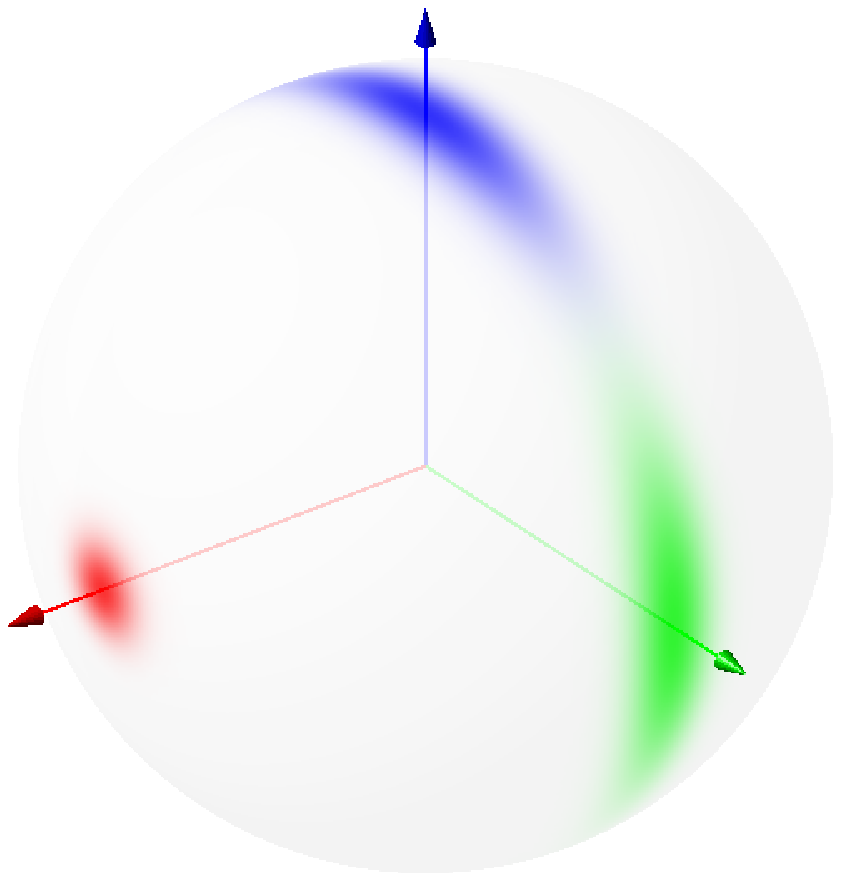}
		}
	}
	\centerline{
		\subfigure[right-triv. \eqref{eqn:dR_w} at $t=\SI{0.33}{\second}$]{
			\includegraphics[width=0.45\columnwidth, trim=150 80 130 60, clip]{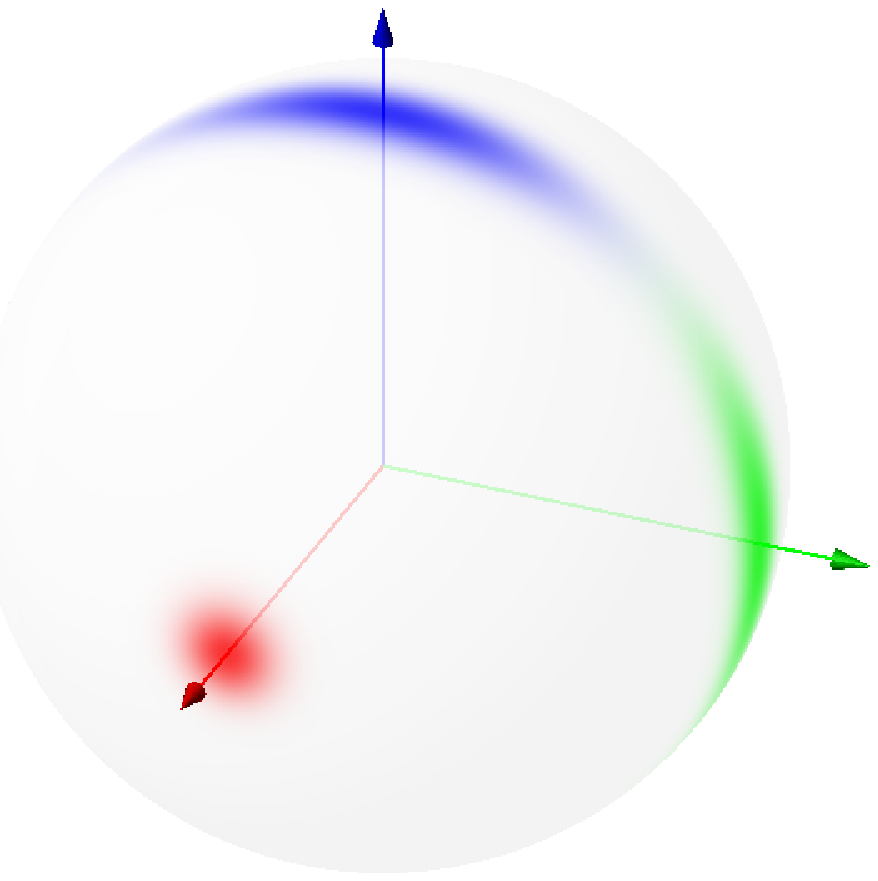}
		}
		\subfigure[left-triv. \eqref{eqn:dR_W} at $t=\SI{0.33}{\second}$]{
			\includegraphics[width=0.45\columnwidth, trim=150 80 130 60, clip]{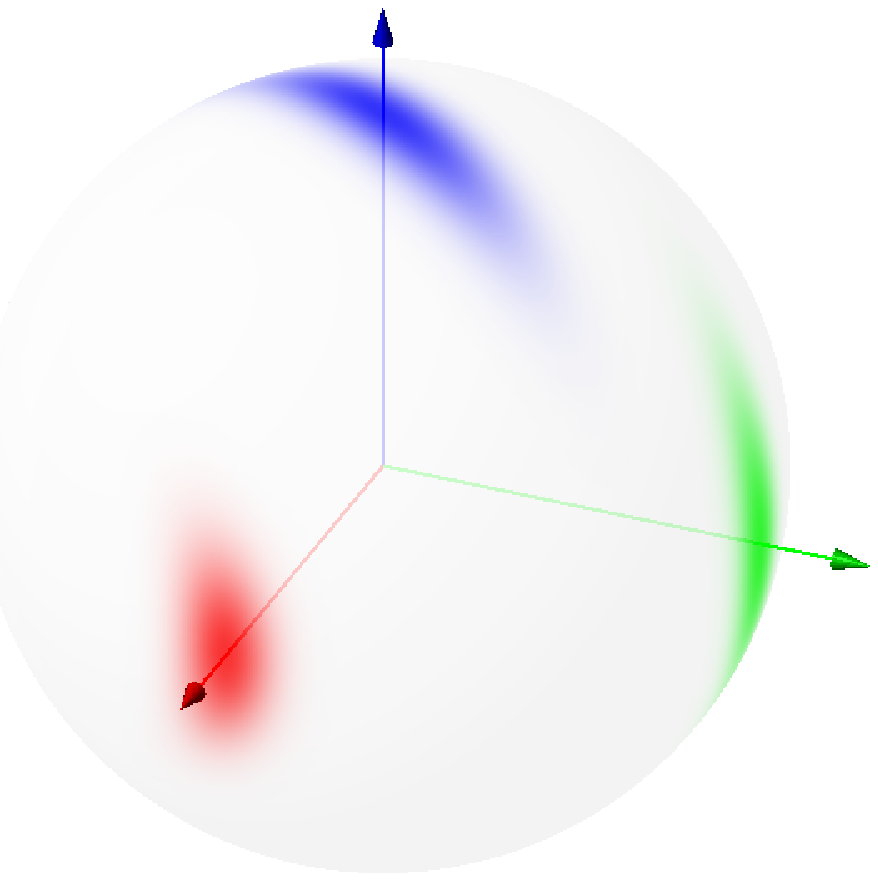}
		}
	}
	\centerline{
		\subfigure[right-triv. \eqref{eqn:dR_w} at $t=\SI{0.67}{\second}$]{
			\includegraphics[width=0.45\columnwidth, trim=150 80 130 60, clip]{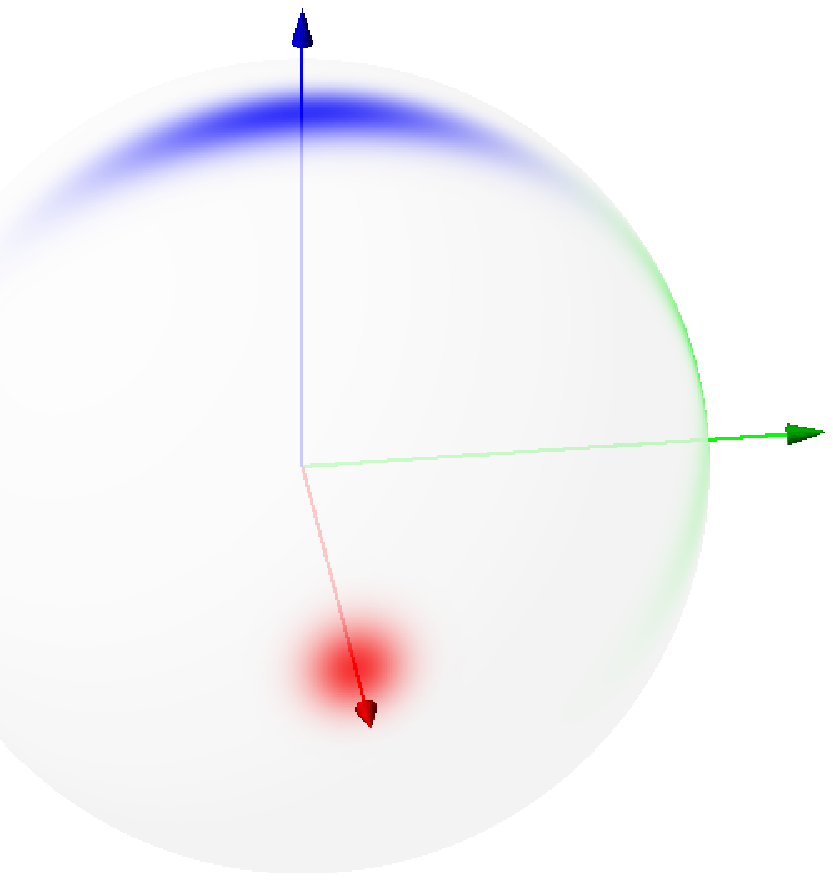}
		}
		\subfigure[left-triv. \eqref{eqn:dR_W} at $t=\SI{0.67}{\second}$]{
			\includegraphics[width=0.45\columnwidth, trim=150 80 130 60, clip]{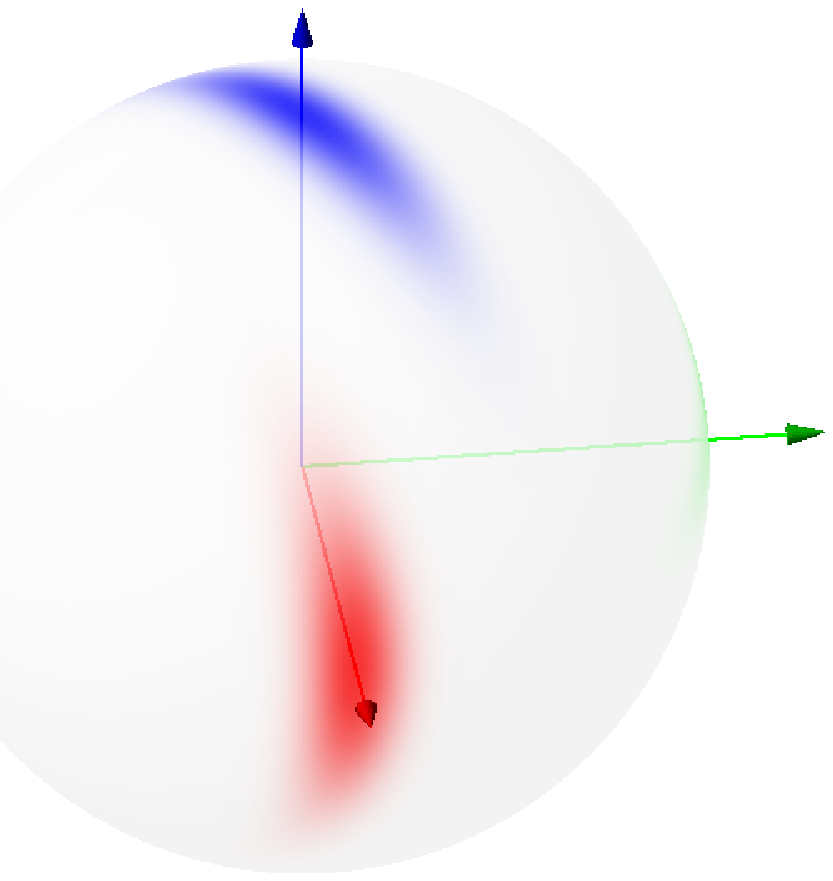}
		}
	}
	\centerline{
		\subfigure[right-triv. \eqref{eqn:dR_w} at $t=\SI{1}{\second}$]{
			\includegraphics[width=0.45\columnwidth, trim=150 80 130 60, clip]{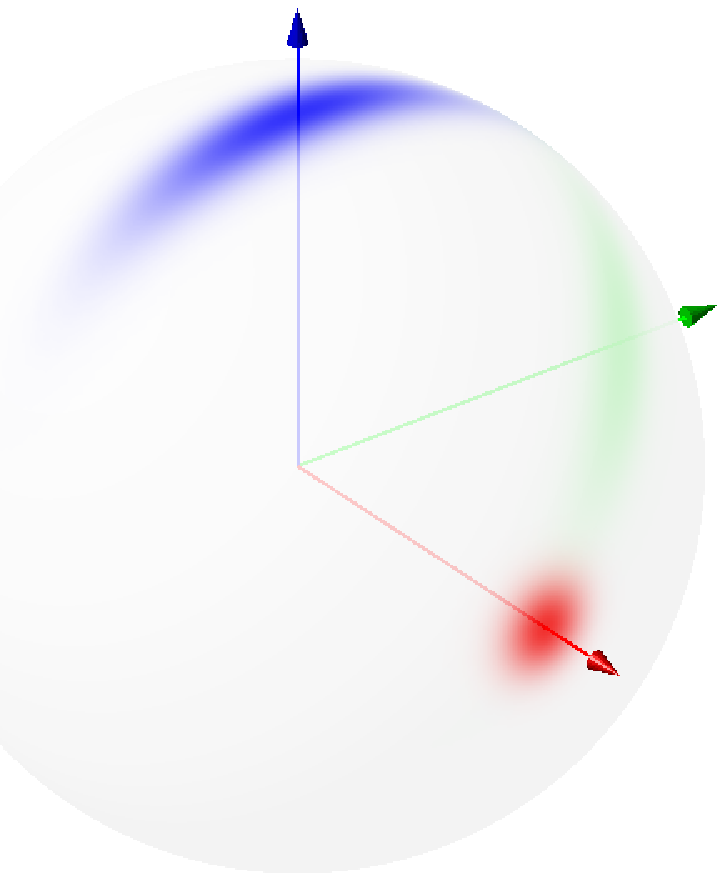}
			\label{fig:prop_R_4}
		}
		\subfigure[left-triv. \eqref{eqn:dR_W} at $t=\SI{1}{\second}$]{
			\includegraphics[width=0.45\columnwidth, trim=150 80 130 60, clip]{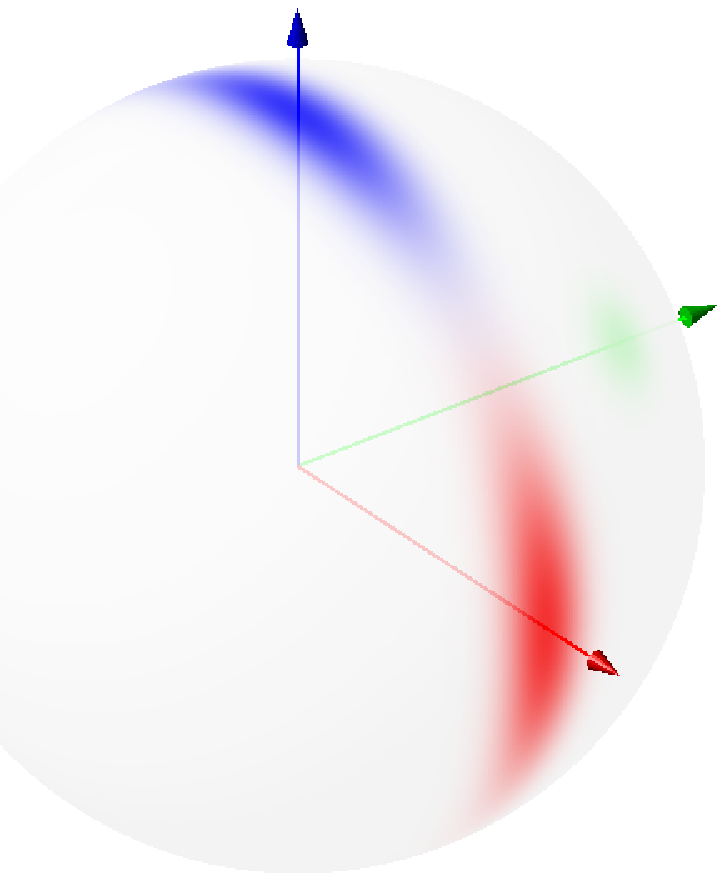}
			\label{fig:prop_L_4}
		}
	}
	\caption{Illustration of propagated uncertainties: (a,c,e,g) by the right-trivialized \eqref{eqn:dR_w}; (b,d,f,h) by the left-trivialized \eqref{eqn:dR_W}.}\label{fig:prop}
\end{figure}

\subsection{Intuition}

So far, we have presented a theoretical analysis and a numerical example illustrating how uncertainties are propagated through \eqref{eqn:dR_w} and \eqref{eqn:dR_W} differently. 
This section is concluded by providing an intuition behind the discrepancy. 
In the deterministic case, the key relation to associate \eqref{eqn:R_dot_w} with \eqref{eqn:R_dot_W} is $\Omega = R^T \omega$. 
Whereas in the stochastic case, such transformation between $\Omega$ and $\omega$ does not exist. 
Suppose that the initial distribution is represented by a large number of sample attitudes. 
For the right-trivialized \eqref{eqn:dR_w}, at any instance, all of sample attitudes are subject to the same angular velocity, whose coordinates in the inertial frame are given by $\omega(t)$. 
This explains why the distribution \textit{purely rotates} in the left column of \Cref{fig:prop}, when observed from the inertial frame. 
On the other hand, the angular velocity $\Omega(t)$ is prescribed in the body-fixed frame at \eqref{eqn:dR_W}. 
While all of the sample attitudes share the same $\Omega(t)$, they are rotated by non-identical angular velocity \textit{vectors} as they have different body-fixed axes in the inertial frame. 
This is the fundamental reason why \eqref{eqn:dR_W} is distinct from \eqref{eqn:dR_w}: there is no $\Omega(t)$ resulting in the same rotation for all sample attitudes when observed from the inertial frame. 

\section{Single Direction Measurements}\label{sec:SDM}

In this section, we present two types of direction measurements, and show how these measurements differ in characterizing attitude uncertainties. 

\subsection{Measurement Update}\label{sec:MU}

Depending on the choice of a reference direction, there are two types of direction measurements. 
In the first type, the reference direction is known in the inertial frame and the measurement reading is resolved in the body-fixed frame.
For example, this corresponds to a magnetometer that measures the direction of magnetic field, or an accelerometer that measures the direction of gravity.
Both commonly appear in the Inertial Measurement Units (IMU) utilized in aircraft or robotic systems. 
Another example would be a sun sensor or a star tracker in spacecraft that determine the direction towards the sun or a selected set of distant stars. 

Alternatively, in the less common second type, the reference direction is known in the body-fixed frame, and the measurement reading is resolved in the inertial frame. 
For instance, differential GPS has been utilized in attitude determination of an aircraft, where two GPS antennas are attached to, say, the left wing-tip and the right-wing tip, and GPS measurements provide the unit-vector from the left wing to the right wing in the inertial frame.

These are referred to as \textit{inertial direction measurement} and \textit{body-fixed direction measurement}, respectively, throughout this paper. 
Analogous to the fact that uncertainties propagated through the right-trivialized \eqref{eqn:dR_w} are distinct from the left-trivialized \eqref{eqn:dR_W}, the information available from an inertial direction measurement is different from a body-fixed direction measurement, as discussed below.

First, consider inertial direction measurements. 
Let $\mathbf{a}$ be a reference vector fixed in the inertial frame, and $a\in\Sph^2 =\{q\in\Re^3\,|\, \|q\|=1\}$ be the corresponding coordinates of the vector $\mathbf{a}$ resolved in the inertial frame. 
Thus, $a$ is known and fixed.
The reference direction is measured with a sensor fixed to the body, which provides the measurement $x\in\Sph^2$ resolved in the body-fixed frame. 
In the absence of noise, the measurement should be $x = R^T a$.
Instead, it is assumed that the sensor measurement for a given attitude  is distributed according to the von-Mises Fisher distribution on $\Sph^2$~\cite{MarJup99} as follows:
\begin{align} \label{eqn:x|R}
    p(x|R) = \frac{\kappa}{4\pi\sinh \kappa} \exp( \kappa a^T Rx),
\end{align}
for a parameter $\kappa>0$ that determines the degree of dispersion. 
This distribution is centered at $R^T a$ and it is more concentrated as $\kappa$ is increased, implying a more accurate sensor. 

The additional information gathered by $x$ is summarized by the following formulation of measurement update. 
\begin{theorem} \label{thm:FI_post}
    Let $R\sim\mathcal{M}(F^-)$ be a prior distribution for a given $F^-\in\Re^{3\times 3}$, and let $a\in\Sph^2$ be the coordinates of a reference vector $\mathbf{a}$ in the inertial frame.
    The direction measurement of $\mathbf{a}$ resolved in the body-fixed frame is given by $x\in\Sph^2$. 
    Then, the posterior distribution of the attitude conditioned by $x$ is also matrix Fisher with $R|x \sim \mathcal{M}(F_I^+)$, where the posterior matrix parameter $F_I^+\in\Re^{3\times 3}$ is given by
\begin{align}
    F_I^+ = F^- + \kappa a x^T.\label{eqn:F_I}
\end{align}
\end{theorem}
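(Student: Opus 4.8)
The plan is to apply Bayes' rule and exploit the fact that the matrix Fisher family is conjugate to the von Mises--Fisher likelihood. First I would write the posterior density as $p(R|x) \propto p(x|R)\, p(R)$ and substitute the prior \eqref{eqn:pMF} with parameter $F^-$ together with the likelihood \eqref{eqn:x|R}. Discarding every factor independent of $R$ (the prior normalizer $c(F^-)$ and the von Mises--Fisher constant $\tfrac{\kappa}{4\pi\sinh\kappa}$), the posterior is proportional to $\exp(\trs{(F^-)^T R} + \kappa a^T R x)$. The whole task then reduces to recasting the scalar $\kappa a^T R x$ as a trace of the form $\trs{(\cdot)^T R}$ so that it merges with the matrix Fisher exponent.

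The key step is a rank-one trace identity. Since $a^T R x$ is a scalar, it equals its own trace, and the cyclic property of the trace gives
\[
a^T R x = \trs{a^T R x} = \trs{x a^T R} = \trs{(a x^T)^T R}.
\]
Hence $\kappa a^T R x = \trs{(\kappa a x^T)^T R}$, and the prior and likelihood exponents combine into the single trace $\trs{(F^- + \kappa a x^T)^T R}$.

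It then follows that $p(R|x) \propto \exp(\trs{(F_I^+)^T R})$ with $F_I^+ = F^- + \kappa a x^T$, which is precisely the unnormalized matrix Fisher density with matrix parameter $F_I^+$. Because $p(R|x)$ is a genuine probability density on $\SO$ and therefore integrates to one, the proportionality constant is forced to be exactly the normalizing constant $c(F_I^+)$ of the Definition. This yields $R|x \sim \mathcal{M}(F_I^+)$ and completes the argument.

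I do not anticipate a genuine obstacle: the only nonroutine manipulation is the rank-one trace identity, and the conclusion rests on the structural observation that both the prior log-density and the log-likelihood are \emph{linear} in $R$, so their sum stays within the matrix Fisher family. The one point requiring care is the transpose/orientation convention, which must be checked so that the additive term is $\kappa a x^T$ rather than $\kappa x a^T$; this is pinned down entirely by the direction of the trace identity above.
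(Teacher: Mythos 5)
Your proposal is correct and follows the same route the paper intends: the paper's proof of this theorem is simply a citation to the reference \cite{LeeITAC18}, and for the companion body-fixed result it explicitly states that the claim follows from Bayes' rule in exactly the conjugacy fashion you describe. Your rank-one trace identity $\kappa a^T R x = \trs{(\kappa a x^T)^T R}$ is the correct and only nontrivial step, and your attention to the transpose convention is exactly where the care is needed.
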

\begin{proof}
    See \cite{LeeITAC18}.
\end{proof}

Similarly, the body-fixed direction measurement is considered as follows. 
Let $\mathbf{b}$ be a vector fixed to the body-fixed frame, and $b\in\Sph^2$ be its coordinates resolved in the body-fixed frame. 
Similar to the above, $b$ is known and fixed. 
The vector $\mathbf{b}$ is measured by a sensor generating a measurement $y\in\Sph^2$ resolved in the inertial frame. 
In the absence of noise, we have $y = R b$, and $y|R$ is assumed to be distributed by
\begin{align} \label{eqn:y|R}
    p(y|R) = \frac{\kappa}{4\pi\sinh \kappa} \exp( \kappa b^T R^T y),
\end{align}
for a parameter $\kappa>0$.

\begin{theorem} \label{thm:FB_post}
    Let $R\sim\mathcal{M}(F^-)$ be a prior distribution for a given $F^-\in\Re^{3\times 3}$, and let $b\in\Sph^2$ be the coordinates of a reference vector $\mathbf{b}$ in the body-fixed frame.
    The direction measurement of $\mathbf{b}$ resolved in the inertial frame is given by \\ $y\in\Sph^2$. 
    Then, the posterior distribution of the attitude conditioned by $y$ is also matrix Fisher with $R|y \sim \mathcal{M}(F_B^+)$, where the posterior matrix parameter $F_B^+\in\Re^{3\times 3}$ is given by
\begin{align}
    F_B^+ = F^- + \kappa y b^T.\label{eqn:F_B}
\end{align}
\end{theorem}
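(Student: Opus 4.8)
The plan is to mirror the argument behind \Cref{thm:FI_post}, exploiting the conjugacy between the matrix Fisher prior and the von Mises--Fisher likelihood \eqref{eqn:y|R}. First I would apply Bayes' rule, writing the posterior density up to an $R$-independent factor as
\[
    p(R\mid y) \;\propto\; p(y\mid R)\, p(R) \;\propto\; \exp(\kappa b^T R^T y)\,\exp(\trs{(F^-)^T R}),
\]
where the normalizing constants of both the prior and the likelihood do not depend on $R$ and are absorbed into the overall proportionality. The goal is then to show that the product of the two exponentials is again the kernel of a matrix Fisher density, so that the posterior stays within the same family and only the matrix parameter is updated.

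The key step is to recast the likelihood exponent $\kappa b^T R^T y$ into the canonical form $\trs{(\cdot)^T R}$. Since $b^T R^T y$ is a scalar, it equals its own trace, and the cyclic property of the trace gives $b^T R^T y = \trs{R^T y b^T}$. Applying the identity $\trs{R^T M} = \trs{M^T R}$ with $M = y b^T$ converts this into $\trs{(y b^T)^T R}$, so that $\kappa b^T R^T y = \trs{(\kappa y b^T)^T R}$. This is the only place where care is required: the difference from the inertial case \eqref{eqn:F_I} is merely the order in which this manipulation is carried out, owing to the factor $R^T$ appearing in \eqref{eqn:y|R} rather than the $R$ of \eqref{eqn:x|R}, and the bookkeeping must ensure that the fixed vector $b$ and the measurement $y$ enter as the rank-one term $\kappa y b^T$ rather than its transpose $\kappa b y^T$.

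Combining the two exponents then yields
\[
    p(R\mid y) \;\propto\; \exp(\trs{(F^- + \kappa y b^T)^T R}),
\]
which is precisely the kernel of a matrix Fisher distribution with parameter $F_B^+ = F^- + \kappa y b^T$. Because the matrix Fisher family is thus closed under this update and the posterior density must integrate to one over $\SO$, the normalizing constant is automatically $c(F_B^+)$, completing the identification $R\mid y \sim \mathcal{M}(F_B^+)$. I do not anticipate a genuine obstacle here, as the statement reduces to a routine conjugate-prior computation; the substantive content lies entirely in the trace manipulation of the preceding paragraph, which translates the bilinear coupling $b^T R^T y$ between the body-fixed reference and the inertial-frame measurement into the additive rank-one correction to the matrix parameter.
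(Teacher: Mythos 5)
Your proposal is correct and follows exactly the route the paper intends: the paper's proof is a one-line appeal to Bayes' rule and the conjugacy computation in \cite{LeeITAC18}, which is precisely the calculation you carry out, and your trace identity $\kappa b^T R^T y = \trs{(\kappa y b^T)^T R}$ is the right bookkeeping to land on $F_B^+ = F^- + \kappa y b^T$ rather than its transpose. Nothing further is needed.
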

\begin{proof}
    This can be easily shown by the Bayes' rule, and the detailed procedure is similar with \cite{LeeITAC18}.
\end{proof}

\subsection{Difference Between Inertial and Body-Fixed Measurement}

Now, we study the implication of \eqref{eqn:F_I} and \eqref{eqn:F_B}.
Suppose that the attitude is completely unknown before the measurement, i.e., $F^-=0_{3\times 3}$. 
For the inertial direction measurement, the matrix parameter \eqref{eqn:F_I} for the posterior distribution is decomposed into
\begin{align} \label{eqn:SVD_FI+}
    F_I^+  = \begin{bmatrix} a & a' & a'' \end{bmatrix} 
    \mathrm{diag}[\kappa, 0,0] \begin{bmatrix} x & x' & x'' \end{bmatrix}^T,
\end{align}
where $a',a''\in\Sph^2$ are chosen such that $a' \cdot a =0$ and $a'' = a\times a'$, which ensure that the matrix $[a,a',a'']\in\SO$.
The other $x',x''\in\Sph^2$ are defined similarly. 
Therefore, the above is written in the form of proper singular value decomposition with $S=\mathrm{diag}[\kappa,0,0]$. 
The first principal axis is $a$ when resolved in the inertial frame, or $x$ when resolved in the body-fixed frame. 
Further, the rotation about the first principal axis is completely unknown as $s_2+s_3 = 0$.

In other words, the marginal distribution of each body-fixed axis will make a circle normal to the fixed $\mathbf{a}$.
These are illustrated in the left column of \Cref{fig:mea}, where three different measurements $x$ are considered for the reference direction  chosen as $a=e_1$ with the parameter $\kappa = 500$. 
These show that a single inertial direction measurement determines the attitude up to the rotation about $\mathbf{a}$.
Since $\mathbf{a}$ is fixed in the inertial frame, the direction of this ambiguity does not change. 

Next, the matrix parameter \eqref{eqn:F_B} for the posterior distribution of a body-fixed direction measurement is decomposed into
\begin{align} \label{eqn:SVD_FB+}
    F_B^+  = \begin{bmatrix} y & y' & y'' \end{bmatrix} 
    \mathrm{diag}[\kappa, 0,0] \begin{bmatrix} b & b' & b'' \end{bmatrix}^T,
\end{align}
where $y',y''\in\Sph^2$ and $b',b''\in\Sph^2$ are chosen such that the corresponding enclosing matrices belong to $\SO$, respectively. 
The resulting first principal axis is $b$ when resolved in the body-fixed frame. 

In other words, the marginal distribution of each body-fixed axis will make a circle normal to $\mathbf{b}$ fixed in the body-fixed frame.
These are illustrated in the right column of \Cref{fig:mea}, where three different measurements $y$ are considered for the reference direction chosen as $b=e_1$ with the parameter $\kappa = 500$. 
Here, the particular choice of $b=e_1$ makes the marginal distribution of the second body-fixed axis overlap with the third-body fixed axis. 
If $b$ is not aligned with any body-fixed axis of the mean attitude, it would yield three rings as in the left column of the figure. 
The important distinction from the inertial direction measurement is that these circles for the marginal distribution rotate as the body rotates. 

\begin{figure}
    \centerline{
        \subfigure[$x=e_1$]{
            \includegraphics[width=0.45\columnwidth, trim=130 80 130 60, clip]{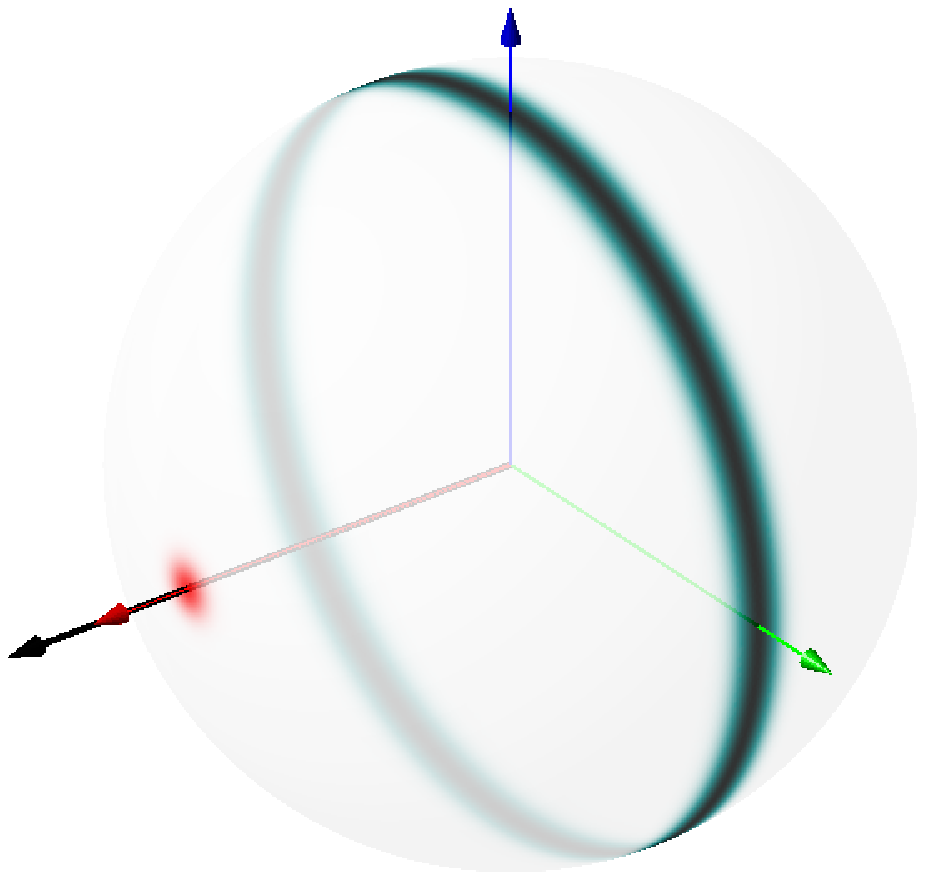}
            \begin{tikzpicture}[overlay]
                \draw[arrows={-Triangle[angle=30:4pt]}] (-2.9,0.25) -- ++(90:0.5);
                \draw[arrows={-Triangle[angle=30:4pt]}] (-2.9,0.25) -- ++(-30:0.5);
                \draw[arrows={-Triangle[angle=30:4pt]}] (-2.9,0.25) -- ++(210:0.5);
                \node at (-3.3,0.2) {\scriptsize $\mathbf{e}_1$};
                \node at (-2.5,0.2) {\scriptsize $\mathbf{e}_2$};
                \node at (-2.9,0.85) {\scriptsize $\mathbf{e}_3$};
            \end{tikzpicture}
            \label{fig:mea_I_1}
        }
        \subfigure[$y=e_1$]{
            \includegraphics[width=0.45\columnwidth, trim=150 80 120 60, clip]{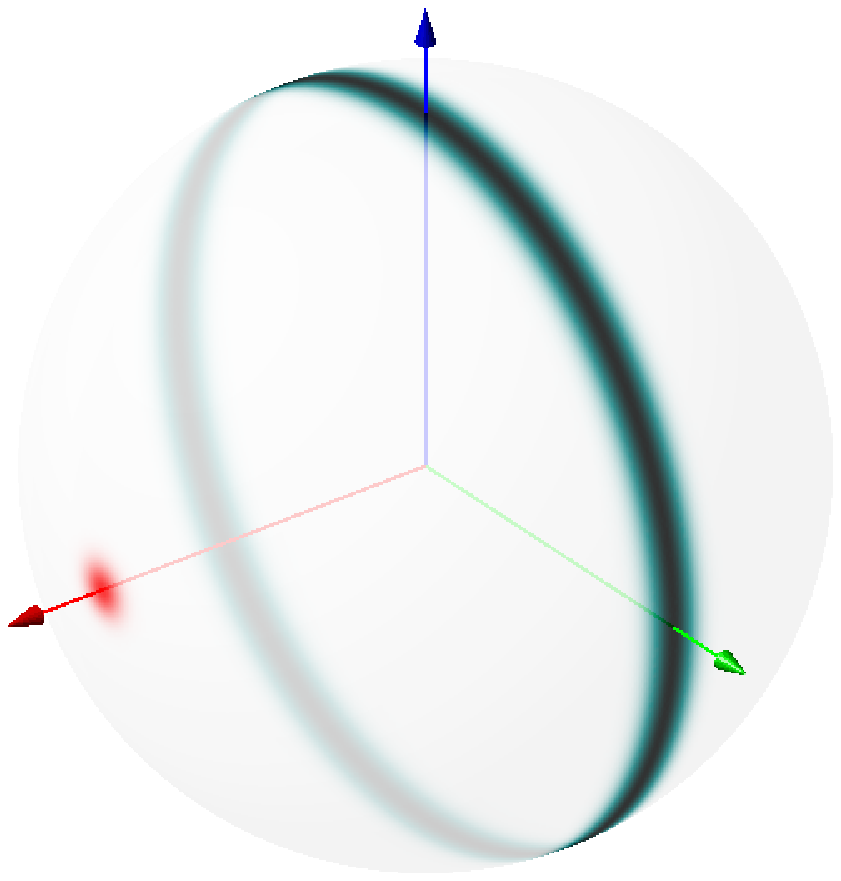}
        }
    }
    \centerline{
        \subfigure[$x=\cos\frac{\pi}{6}e_1 + \sin\frac{\pi}{6} e_2$]{
            \includegraphics[width=0.45\columnwidth, trim=130 80 130 60, clip]{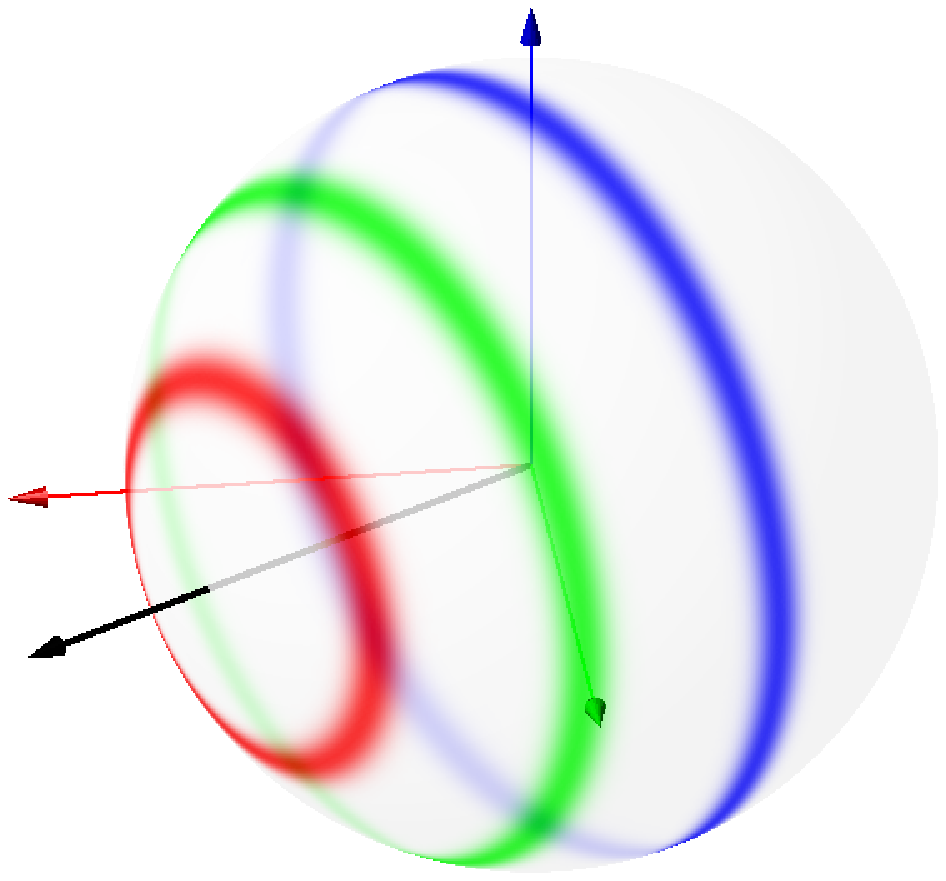}
            \label{fig:mea_I_2}
        }
        \subfigure[$y=\cos\frac{\pi}{6}e_1 + \sin\frac{\pi}{6} e_2$]{
            \includegraphics[width=0.45\columnwidth, trim=150 80 120 60, clip]{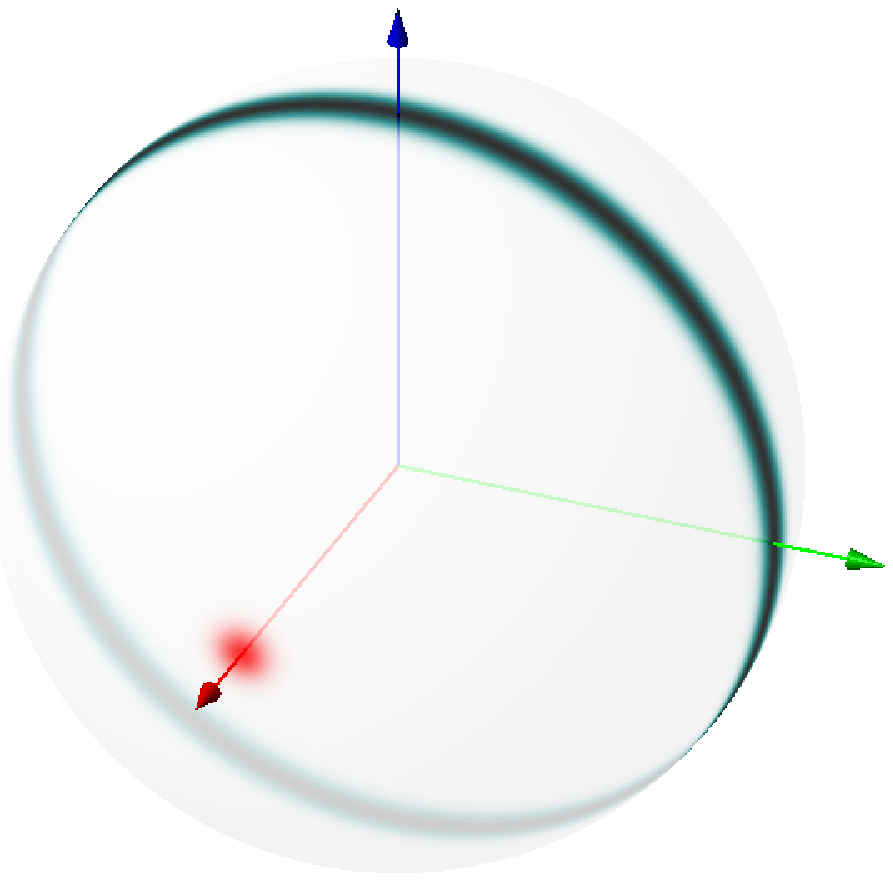}
        }
    }
    \centerline{
        \subfigure[$x=\cos\frac{2\pi}{3}e_1 + \sin\frac{2\pi}{3} e_2$]{
            \includegraphics[width=0.45\columnwidth, trim=140 80 120 60, clip]{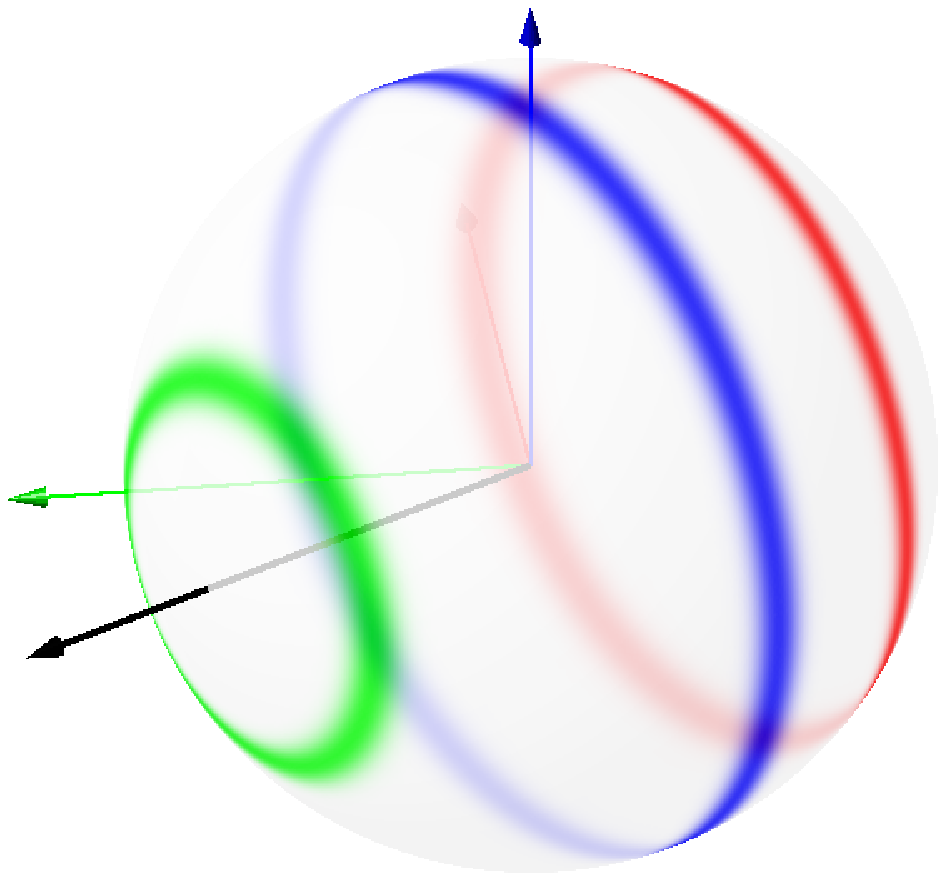}
            \label{fig:mea_I_3}
        }
        \subfigure[$y=\cos\frac{2\pi}{3}e_1 + \sin\frac{2\pi}{3} e_2$]{
            \includegraphics[width=0.45\columnwidth, trim=140 80 130 60, clip]{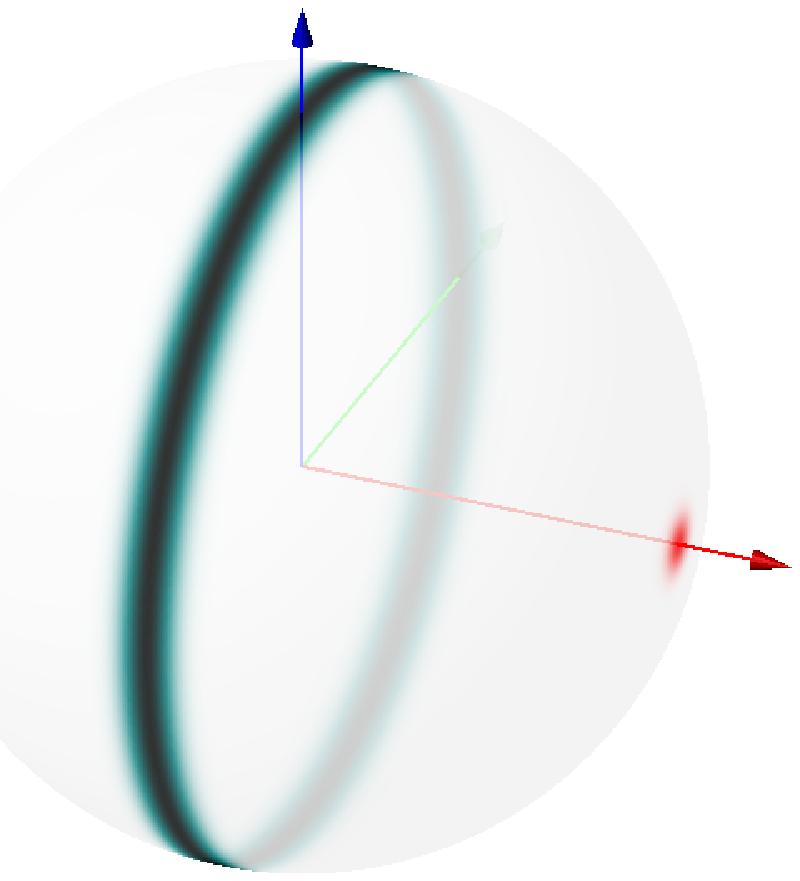}
        }
    }
    \caption{Posterior distribution with several single direction measurements: (a,c,e) inertial direction measurements with $a=e_1$; (b,d,f) body-fixed direction measurements with $b=e_1$.}\label{fig:mea}
\end{figure}

\section{Stochastic Attitude Observability} \label{sec:SAO}

Before proceeding to show the result in \Cref{table:observability}, we first define formally what the observability of attitude is in the stochastic sense.
In estimation of a stochastic dynamical system, the posterior distribution of the state conditioned by measurements carries the complete information we know about the state.
Therefore, the question is to what extent we may infer the state from its posterior distribution.
In this section, we address this problem through two aspects. 
First, given the posterior distribution on $\SO$, we want to know if the attitude that minimizes the mean squared error is unique, which indicates whether a single ``best'' estimate can be deduced from the distribution.
Second, we want to study the positive definiteness for the Fisher information of the mean attitude for the posterior distribution, which implicates whether the information contained in the distribution is enough to decide the mean attitude.
These two aspects are studied in details in the following two subsections.

\subsection{Uniqueness of Attitude Estimate}

A popular method to infer an attitude from a density function or some weighted random samples on $\SO$ is to solve an optimization problem that minimizes the mean squared Frobenius norm~\cite{oshman2004estimating,MarPMDSAS05}, defined as follows.
\begin{definition}
    Let $p(R)$ be the probability density function for a random $R\in\SO$. 
    Its minimum mean square estimate (MMSE) is defined as
    \begin{align}
        \mathrm{M}_{\mathrm{MMSE}} [R] = \argmin_{Q\in\SO} \{\E[\| R - Q\|_F^2 ]\},
    \end{align}
    where $\|\cdot \|_F$ denotes the Frobenius norm. 
    Or, equivalently, 
    \begin{align}
        \mathrm{M}_{\mathrm{MMSE}} [R] = \argmax_{R\in\SO} \{\trs{R^T \E[R]}\}. \label{eqn:MMSE_max}
    \end{align}
\end{definition}

Also, \eqref{eqn:MMSE_max} is closely related to Wahba's problem~\cite{WahSR65}, whose solution is well established in terms of the proper singular value decomposition~\cite{MarJAS88}.
Here, we present the solution of the above MMSE with an additional careful analysis on its uniqueness.

\begin{lem}\label{lem:MMSE}
    Suppose $R\in\SO$ is a random rotation matrix.
    Let the proper singular value decomposition of the first moment be $\E[R]=UDV^T$ where $U,V\in\SO$ and $D=\mathrm{diag}[d_1,d_2,d_3]\in\Re^{3\times 3}$ for $d_1\geq d_2\geq |d_3|\geq 0$. 
    Depending on $D$, the MMSE of $R$ is given by
    \begin{enumerate}
        \item $d_2+d_3 >0$: $UV^T$ (unique),
        \item $d_1\neq d_2$ and $d_2+d_3=0$: $\{U\exp(\theta\hat e_1) V^T\,|\,\theta\in[-\pi,\pi)\}$ (1D),
        \item $d_1=d_2 = -d_3 >0$: $\{U\exp(\theta\hat a) V^T|\, a\in\Sph^2,\; a_3 =0, \theta\in[-\pi,\pi)\}$ (2D),
        \item $d_1=d_2=d_3=0$:  $\SO$ (3D),
    \end{enumerate}
    where the number in the parentheses indicates the dimension of the set corresponding to the solution of MMSE.
\end{lem}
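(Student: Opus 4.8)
The plan is to turn the constrained maximization into a diagonal one and then establish a single sharp inequality whose equality set I can analyze case by case. Writing the proper SVD as $\E[R]=UDV^T$ and substituting $P=U^TQV$, which ranges over $\SO$ as $Q$ does (because $U,V\in\SO$), the objective in \eqref{eqn:MMSE_max} becomes
\begin{align*}
\trs{Q^T\E[R]} = \trs{DP} = \sum_i d_i P_{ii},
\end{align*}
using the cyclic invariance of the trace and the diagonality of $D$. Thus it suffices to find the maximizers of $g(P)=\sum_i d_iP_{ii}$ over $P\in\SO$ and map them back via $Q=UPV^T$; the natural candidate is $P=I_{3\times 3}$, for which $g(I_{3\times 3})=d_1+d_2+d_3=\trs{D}$.

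The key tool I would develop is the elementary inequality, valid for every $P\in\SO$ and every cyclic $(i,j,k)$,
\begin{align*}
1 + P_{ii} - P_{jj} - P_{kk} \geq 0,
\end{align*}
proved by writing $P=\exp(\phi\hat n)$ in axis--angle form and computing the left-hand side to be $2(1-\cos\phi)\,n_i^2$; the same computation records that equality holds \emph{iff} $\phi=0$ (so $P=I_{3\times 3}$) or $n_i=0$ (axis perpendicular to $e_i$). Taking the nonnegative combination of the three such inequalities with multipliers $\lambda_1=\tfrac12(d_2+d_3)$, $\lambda_2=\tfrac12(d_1+d_3)$, $\lambda_3=\tfrac12(d_1+d_2)$ then collapses to the identity
\begin{align*}
\trs{D} - g(P) = \sum_i \lambda_i\,(1 + P_{ii} - P_{jj} - P_{kk}) \geq 0.
\end{align*}
Each $\lambda_i$ is nonnegative because the ordering $d_1\geq d_2\geq|d_3|\geq 0$ forces every pairwise sum $d_j+d_k$ to be nonnegative, and the coefficient bookkeeping (constant term $\lambda_1+\lambda_2+\lambda_3=\trs{D}$, coefficient of $P_{ii}$ equal to $-d_i$) verifies the identity. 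Hence $g(P)\leq\trs{D}$, and by complementary slackness equality holds \emph{iff} every inequality with $\lambda_i>0$ is tight.

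The case analysis is then read off from which $\lambda_i$ vanish. If $d_2+d_3>0$, all three $\lambda_i$ are positive, so all three inequalities are tight; summing the tightness conditions $1+2P_{ii}-\trs{P}=0$ gives $\trs{P}=3$, forcing $P=I_{3\times 3}$ and the unique MMSE $UV^T$. If $d_2+d_3=0$ but $d_1\neq d_2$, only $\lambda_2,\lambda_3>0$; tightness of those two forces $P_{11}=1$, hence $P=\exp(\theta\hat e_1)$, the one-parameter family. If $d_1=d_2=-d_3>0$, only $\lambda_3>0$; its tightness is $2(1-\cos\phi)n_3^2=0$, i.e. $\phi=0$ or $n_3=0$, which are exactly the rotations about axes in the $e_1$--$e_2$ plane, a two-parameter family $\{\exp(\theta\hat a): a\in\Sph^2,\,a_3=0\}$. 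Finally $d_1=d_2=d_3=0$ means $\E[R]=0$, so $g\equiv 0$ and all of $\SO$ is optimal. In each case I would also substitute back to confirm directly that $g$ is \emph{constant} equal to $\trs{D}$ along the claimed set (e.g. $d_1+(d_2+d_3)\cos\theta=d_1$ on the circle in case 2), so the characterization is exact rather than a mere inclusion.

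The main obstacle I anticipate is not the bound but the equality analysis: I must argue that complementary slackness pins down \emph{precisely} the stated sets, in particular that the two tightness conditions in case 2 genuinely force $P_{11}=1$ and therefore a rotation fixing $e_1$, and that the single condition in case 3 is exactly $n_3=0$. Alongside this I would check that the four cases are mutually exclusive and exhaustive under $d_1\geq d_2\geq|d_3|\geq 0$, and confirm that $g$ remains constant on the one- and two-dimensional families; these verifications are what make the dimension counts $0,1,2,3$ rigorous.
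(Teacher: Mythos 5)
Your proposal is correct and follows essentially the same route as the paper: after reducing to the diagonal objective $\trs{DP}$, your weighted combination of the three inequalities $1+P_{ii}-P_{jj}-P_{kk}\geq 0$ in axis--angle form reproduces exactly the paper's identity $\trs{R^T\E[R]}=\trs{D}-(1-\cos\theta)\sum_i(d_i+d_j)a_k^2$, and the case analysis according to which coefficients $d_j+d_k$ vanish is the same. The only substantive difference is that the paper additionally verifies that the non-uniqueness of $U,V$ under repeated singular values does not enlarge the claimed sets, a point your two-sided (iff) characterization of the argmax set renders automatic.
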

\begin{proof}
    Let $\theta\in[-\pi,\pi)$ and $a\in\Sph^2$ be defined such that $U^T R V = \exp(\theta\hat a)$. 
    Then, we have
    \begin{align}
        \trs{R^T \E[R]} 
                         & = \trs{D} - (1-\cos\theta) \sum_{i=1}^3 (d_i+d_j) a_k^2, \label{eqn:trsRER}
    \end{align}
    where $(i,j,k)\in\{(1,2,3),(2,3,1),(3,1,2)\}$.
    Since $d_1+d_2 \geq d_3+d_1 \geq d_2 +d_3 \geq 0$, the above is maximized when $\theta=0$, or equivalently $R= UV^T$. 

    The uniqueness is contributed by two aspects: the uniqueness of $UV^T$ from the proper singular value decomposition and the uniqueness of the maximum at \eqref{eqn:trsRER}.
    First note that the non-uniqueness of proper singular vectors caused by any simultaneous sign change of the corresponding columns of $U$ and $V$ does not affect the uniqueness of $UV^T$, therefore we will only consider the non-uniqueness of $U,V$ caused by repeated singular values.
    \begin{enumerate}
        \item $d_2+d_3 >0$:
            If $d_3\geq 0$, the polar decomposition of $\E[R]$ is written as $\E[R] = (UV^T)(VDV^T)$, where $VDV^T = (\mathrm{E}[R]^T\mathrm{E}[R])^{1/2}$ is uniquely determined.
            The rank of $VDV^T$ is at least two, so $UV^T$ rotates two independent columns of $VDV^T$ to the corresponding columns of $\E[R]$, and therefore it is unique.

            Next, when $d_3<0$, consider two sub-cases: (i) if $d_1\neq d_2$, then both $U$ and $V$ are unique; (ii) if $d_1=d_2$, $(U,V)$ can be replaced by $(U\exp(\phi\hat{e}_3),V\exp(\phi\hat{e}_3))$ for any $\phi\in[-\pi,\pi)$, but $UV^T$ is still unique.
        \item[2a)] $d_1 \neq d_2 =  d_3 =  0$:  We have
            \begin{align*}
                \trs{R^T \E[R]} & = \trs{D} - (1-\cos\theta) d_1 (1-a_1^2),
            \end{align*}
            which is maximized for any $R$ in the given set of $a=e_1$. 
            The matrices $U$ and $V$ can be replaced with 
            $U\exp(\phi_1\hat e_1)$ and $V\exp(\phi_2\hat e_1)$, respectively, for any $\phi_1,\phi_2\in[-\pi,\pi)$. 
            However, the ambiguity of $U,V$ does not enlarge the set of $R$ maximizing \eqref{eqn:trsRER}.
        \item[2b)] $d_1 \neq d_2 = -d_3 >0$: Similarly, \eqref{eqn:trsRER} is maximized for any $R$ in the given set.
        The matrices $(U,V)$ can be replaced with 
        $(U\exp(\phi\hat e_1), V\exp(-\phi\hat e_1))$ for any $\phi\in[-\pi,\pi)$.
        But, it does not alter the given set. 
        \item[3)] $d_1 = d_2 = -d_3 >0$: We have
        	\begin{align*}
        		\trs{R^T \E[R]} = \trs{D} - (1-\cos\theta)(d_1+d_2)a_3^2,
        	\end{align*}
        	which is maximized when $a_3=0$. 
            The ambiguity of $U$ and $V$ is written as $U\exp(\phi_1\hat e_1+\phi_2 \hat e_2 +\phi_3 \hat e_3)$ and $V\exp(-\phi_1\hat e_1 - \phi_2\hat e_2 +\phi_3\hat e_3)$, respectively for any $\phi_1,\phi_2,\phi_3\in[-\pi,\pi)$.
            Same as above, this does not alter the set of $R$ maximizing \eqref{eqn:trsRER}.
        \item[4)] $d_1 = d_2=d_3=0$: This is a trivial case when $\trs{R^T \E[R]}=0$ for any $R\in\SO$.
    \end{enumerate}
    These complete the proof. 
\end{proof}

For all cases, the MMSE contains $UV^T$.
Nevertheless, only when $d_2+d_3>0$, the MMSE is unique;
otherwise it can only be determined up to a rotation, where the dimension of the set representing the solution of MMSE is equivalent to 3 minus the rank of $\tr{D}I_{3\times 3}-D = \mathrm{diag}[d_2+d_3, d_1+d_3, d_1+d_2]$.
Therefore, we claim that the attitude is completely observable given a density function on $\SO$ if $\tr{D}I_{3\times 3}-D$ is positive-definite, i.e. when the MMSE is unique.

Note that the MMSE and the observability are determined solely by $\E[R]$ without assuming any particular form of $p(R)$. 
However, if the attitude is assumed to follow a matrix Fisher distribution, since $s_i+s_j$ shares the same sign with $d_i+d_j$ as indicated in \Cref{lemma:SD}, the above condition is equivalent to that $\tr{S}I_{3\times 3}-S$ is positive-definite.

\subsection{Information Theoretic Observability Analysis}

Next, we derive another attitude observability criterion from an information theoretic perspective.
Suppose $R$ follows a matrix Fisher distribution $\mathcal{M}(F)$ with the proper singular value decomposition of $F$ given by $F=USV^T$.
Then the MMSE of $R$ is the mean attitude $R^*=UV^T$.
In this subsection, we calculate the Fisher information $\mathcal{I}(R^*)$ of the mean attitude $R^*$ to study the observability of $R$.

We first study a more general problem of estimating $U$, $S$, and $V$ of the matrix Fisher distribution. 
Now, we have a family of probability density functions $p(R|U,S,V)$ parameterized by $(U,S,V)\in\SO\times\Re^3\times \SO$, which we wish to estimate from $R$. 
The resulting log likelihood is
\begin{align}
    l(R|U,S,V) = \trs{F^T R} - \log c(S).\label{eqn:l}
\end{align}
The Fisher information captures the variability of the derivatives of the log likelihood with respect to the parameters, or roughly speaking, the sharpness of the log likelihood~\cite{Chi12v1}.
It is considered as a measure of information that the random variable carries about the unknown parameters, as a smaller Fisher information implies that the likelihood does not vary much with respect to the parameters, thereby making it hard to estimate them. 
The inverse of the Fisher information also serves as a lower bound of the variance of the estimated parameter, according to the Cram\'{e}r--Rao inequality.

Here the conventional Fisher information matrix cannot be directly applied as the unknown parameters reside in a non-Euclidean manifold. 
Instead, the method in \cite{smith2005covariance} which generalizes the Fisher information on Riemannian manifolds is used.
The key idea is considering the Fisher information as a metric, or more explicitly, $\E[dl\otimes dl]$, which is identical to $\E[-\nabla^2 l]$~\cite[Theorem 1]{smith2005covariance}.~\footnote{More rigorous definition of a metric, a tensor product $\otimes$, or a covariant derivative $\nabla$ is beyond the scope of this paper, and it is relegated to other monographs in differential geometry, such as~\cite{MarRat99}.}
The corresponding Fisher information matrix of \eqref{eqn:l} is constructed as follows. 

\begin{lem}\label{lem:FIM}
    The Fisher information matrix of \eqref{eqn:l}, namely $\mathcal{I}(U,S,V):\Re^9\times\Re^9\rightarrow \Re$ is constructed as 
    \begin{align}
    &\mathcal{I}(U,S,V) = -\E[ \nabla^2 l (R|U,S,V) ]\nonumber \\
    &=\begin{bmatrix}
        \trs{DS}I_{3\times 3}- DS  & 0 & \sum_{i=1}^3 e_i^T s \hat e_i D\hat e_i \\
        0 & \frac{\partial^2 \log c(S)}{\partial s^2} & 0 \\
        \sum_{i=1}^3 e_i^T s \hat e_i D\hat e_i& 0 & \trs{DS}I_{3\times 3} -DS
    \end{bmatrix},\label{eqn:FIM}
    \end{align}
    where $D\in\Re^{3\times 3}$ is the diagonal matrix composed of the proper singular values of $\E[R|U,S,V]$.
\end{lem}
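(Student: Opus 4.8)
The plan is to compute the Fisher information as the metric $\mathcal{I} = \E[dl\otimes dl] = -\E[\nabla^2 l]$ provided by \cite{smith2005covariance}, working in trivialized coordinates for the tangent space $\T_{(U,S,V)}(\SO\times\Re^3\times\SO)\cong\Re^9$. I would parameterize perturbations of the three parameters by $\xi,\zeta,\delta\in\Re^3$ through $U\mapsto U\exp(\hat\xi)$, $V\mapsto V\exp(\hat\zeta)$, and $S\mapsto S+\mathrm{diag}[\delta]$, so that the ordered block structure $(\xi,\delta,\zeta)$ matches \eqref{eqn:FIM}. The first simplification is to set $Q = U^T R V$, so that $\trs{F^T R} = \trs{SQ}$; since $R\sim\mathcal{M}(USV^T)$ implies $Q\sim\mathcal{M}(S)$, the expectation $\E[Q]=D$ is exactly the diagonal first moment from \eqref{eqn:M}, agreeing with the $D$ in the statement via $\E[R]=UDV^T$. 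After perturbation the likelihood reads $l = \trs{(S+\mathrm{diag}[\delta])\exp(-\hat\xi)\,Q\,\exp(\hat\zeta)} - \log c(S+\mathrm{diag}[\delta])$, which I would expand to second order in $(\xi,\delta,\zeta)$, collect the six second-order blocks, and then take the expectation, replacing every surviving $Q$ by $D$.

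For the diagonal blocks I would use the identity $\hat\xi^2 = \xi\xi^T - \norm{\xi}^2 I_{3\times3}$. The pure second-order term in $\xi$ is $\tfrac12\trs{S\hat\xi^2 Q}= \tfrac12\xi^T(QS - \trs{SQ}I_{3\times3})\xi$, whose expectation gives the Hessian $DS - \trs{DS}I_{3\times3}$, hence the $(1,1)$ block $\trs{DS}I_{3\times3}-DS$ after negation; the symmetric computation from $\exp(\hat\zeta)$ produces the identical $(3,3)$ block. The $S$--$S$ block comes solely from $-\log c(S+\mathrm{diag}[\delta])$, whose second derivative is $-\partial^2\log c(S)/\partial s^2$ and is independent of $R$, yielding the $(2,2)$ block $\partial^2\log c(S)/\partial s^2$ directly.

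The cross blocks hinge on two observations. The $U$--$S$ and $V$--$S$ mixed terms are $-\trs{\mathrm{diag}[\delta]\,\hat\xi Q}$ and $\trs{\mathrm{diag}[\delta]\,Q\hat\zeta}$; after $Q\mapsto D$ each reduces to $\sum_a \delta_a(\,\cdot\,)_{aa}$ where the diagonal entries pick out $\hat\xi_{aa}$ or $\hat\zeta_{aa}$, which vanish since skew-symmetric matrices have zero diagonal. This gives the off-diagonal zeros. The only nontrivial coupling is the $U$--$V$ block, coming from $-\trs{S\hat\xi Q\hat\zeta}$. Taking the expectation and carrying out the index contraction with $\hat\xi_{ij}=-\epsilon_{ijk}\xi_k$, I would establish $\trs{S\hat\xi D\hat\zeta} = \xi^T\!\left(\sum_{i=1}^3 e_i^T s\,\hat e_i D\hat e_i\right)\!\zeta$, noting each $\hat e_i D\hat e_i$ is diagonal; negating the bilinear term then gives precisely the $(1,3)$ and $(3,1)$ blocks $\sum_{i} e_i^T s\,\hat e_i D\hat e_i$.

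The main subtlety I would address explicitly is the passage from the coordinate second derivative I actually compute to the covariant Hessian $\nabla^2 l$ in the statement. Because the trivialized coordinates are normal coordinates for the product of bi-invariant metrics on $\SO\times\SO$ with the Euclidean metric on $\Re^3$, the Christoffel symbols vanish at the base point, so the connection correction $\E[\Gamma\cdot\nabla l]$ drops out once the expected score is shown to vanish. I would verify the latter directly: $\E[\trs{S\hat\xi D}]=0$ by the zero-diagonal argument, and $\E[\trs{\mathrm{diag}[\delta]Q}]=\sum_i\delta_i\,\partial\log c(S)/\partial s_i$ cancels against the $\delta$-derivative of $-\log c$, using $d_i = c(S)^{-1}\partial c(S)/\partial s_i$. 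The remaining steps are routine bookkeeping; I expect the index contraction for the $U$--$V$ block to be the most error-prone step, and the verification of the vanishing expected score to be the conceptually essential one.
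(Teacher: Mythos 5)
Your proposal is correct and follows essentially the same route as the paper's proof: left-trivializing the tangent space of $\SO\times\Re^3\times\SO$ via the hat map, reducing to $Q=U^TRV$ with $\E[Q]=D$, showing the expected score vanishes so the connection term in $\nabla^2 l$ drops out under expectation, and computing the blocks from second-order perturbations. The only difference is cosmetic (a single second-order Taylor expansion versus the paper's iterated directional derivatives $\xi_2(dl(\xi_1))$), and you in fact supply the cross-block and zero-block computations that the paper dismisses as ``obtained similarly.''
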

\begin{proof}
    Let $\Q = \SO\times\Re^3\times\SO $, and $q=(U,S,V)\in\Q$.
    The tangent space $\T_q\Q$ is identified with $\T_q\Q\simeq \Re^9$ through the hat map, and the cotangent space is also identified with $\Re^9$ using the dot product.
    More specifically, for $\xi=(u,\varsigma,v)\in\Re^9$, the corresponding tangent vector is given by $(U\hat u, \varsigma, V\hat v)\in\T_q \Q$. 

    Since $l$ is a real-valued function on $\Q$, its covariant derivative $\nabla_\xi l$ along $\xi$ is equivalent to the differential $dl(\xi)$ given by
    \begin{align*}
        \nabla_\xi l & = \frac{d}{d\epsilon}\bigg|_{\epsilon =0} l( R | U\exp(\epsilon\hat u), S+\epsilon\mathrm{diag}[\varsigma], V\exp(\epsilon\hat v))\\
                     & = \trs{(U\hat uSV^T + U\mathrm{diag}[\varsigma]V^T - US\hat v V^T)^TR}\\
                     & \quad - \deriv{\log c(S)}{s}\cdot \varsigma \\
                     & = 
                     \begin{bmatrix}
                         (QS-SQ^T)^\vee \\ 
                         \mathrm{diag}[Q]-\frac{1}{c(S)}\deriv{c(S)}{s} \\
                         (Q^TS - SQ)^\vee
                     \end{bmatrix}\cdot \xi,
    \end{align*}
    where $Q=U^TRV$.
    Because $\E[Q] = D$ is diagonal, it is straightforward to show $\E[dl(\xi)]=0$ for any $\xi$.
    
    The second order covariant derivative of $l$ along $\xi_1$ and $\xi_2$ is given by $\nabla^2_{\xi_1,\xi_2} l = \xi_2(\xi_1 l) - (\nabla_{\xi_2}\xi_1) l = \xi_2 ( dl(\xi_1)) - dl(\nabla_{\xi_2} \xi_1)$,
    where the second term vanishes after taking expectation.
    The first term is bi-linear in $\xi_1$ and $\xi_2$, thus it can be written as a matrix as in \eqref{eqn:FIM}.
    Suppose $\xi_1 = (u_1,0,0)$ and $\xi_2 =(u_2,0,0)$.
    We have
    \begin{align*}
        \xi_2 (dl(\xi_1))  & = (-\hat u_2 QS -SQ^T \hat u_2)^\vee \cdot u_1\\
                           &= u_1^T \braces{\frac{1}{2}(QS+SQ^T) - \tr{QS}I_{3\times 3}} u_2.
    \end{align*}
    Taking the expectation of the expression in the braces with $\E[Q]=D$ and multiplying it with $-1$ yield the upper-left 3-by-3 block of \eqref{eqn:FIM}.
    The remaining blocks can be obtained similarly.
\end{proof}

According to~\cite[Theorem 2]{smith2005covariance}, the inverse of \eqref{eqn:FIM} serves as the lower bound of the covariance of the estimated parameters for any unbiased estimator, after neglecting the curvature terms at small errors and biases. 
Recall that the objective is to estimate the attitude $R^*=UV^T$, whose perturbation is 
\begin{align*}
    \delta R^* = U\hat u V^T - U\hat vV^T.
\end{align*}
Let $\eta = u-v\in\Re^3$ so that $\delta R^* = U\hat\eta V^T$. 
Similar with \eqref{eqn:dij_alpha}, the Fisher information matrix to estimate $R^*$ is obtained by left-multiplying \eqref{eqn:FIM} with $\frac{1}{2} [I_{3\times 3}; 0_{3\times 3}; -I_{3\times 3}]$ and by right-multiplying with its transpose to obtain
\begin{align}
    \mathcal{I}(UV^T) = \frac{1}{2} \mathrm{diag} 
    \begin{bmatrix} (d_2+d_3)(s_2+s_3) \\ (d_3+d_1)(s_3+s_1) \\ (d_1+d_2)(s_1+s_2) \end{bmatrix}.\label{eqn:FIM_eta}
\end{align}
Therefore, for any unbiased estimator of $R^*$, its error covariance will not be smaller than the inverse of the above, up to additional curvature terms, and we can take the determinant or the minimum eigenvalue of \eqref{eqn:FIM_eta} as a measure of observability. 

In summary, we have presented two approaches for stochastic attitude observability: \Cref{lem:MMSE} is based on the uniqueness of MMSE for an arbitrary density, and \Cref{lem:FIM} relies on the particular matrix Fisher distribution for an arbitrary unbiased estimator. 
By \Cref{lemma:SD}, $s_i+s_j$ and $d_i+d_j$ share the same sign, hence both approaches provide consistent results.

Based on these we formulate stochastic attitude observability for an arbitrary density as follows.
\begin{definition} \label{def:observability}
    A random rotation matrix $R\sim p(R)$ is stochastically observable if $d_2+ d_3 >0$, or equivalently
    \begin{align}
        \mathcal{O} = \trs{D}I_{3\times 3} - D \succ 0,\label{eqn:OC}
    \end{align}
    where $D=\mathrm{diag}[d_1,d_2,d_3]$ is the proper singular value of $\E[R]$.  
    The corresponding measure of observability is
    \begin{align}
        \rho(R) = \det[\mathcal{O}] = (d_1+d_2)(d_3+d_1)(d_2+d_3).
    \end{align}
\end{definition}
It is straightforward to show that \eqref{eqn:OC} is equivalent to $\trs{S}I_{3\times 3} - S\succ 0$ when $R\sim\mathcal{M}(USV^T)$.

\section{Attitude Observability with Single Direction Measurements}

We have presented techniques to propagate attitude uncertainties in \Cref{sec:UP}, and to update attitude uncertainties with single direction measurements in \Cref{sec:SDM}.
Bayesian attitude estimation is to construct the probability distribution of the attitude conditioned by the history of measurements for a given initial distribution, 
and it is readily addressed by composing the uncertainty propagation scheme in \Cref{sec:UP} with the measurement update in \Cref{sec:SDM}:
the initial distribution is propagated until the first measurement becomes available to correct the distribution, which is propagated until the next measurement, and these are repeated. 
The posterior distribution can then be tested for observability using the criterion presented in Section \ref{sec:SAO}.

As there are two cases for each of uncertainty propagation and measurement update, we have four possible combinations to estimate attitude, as summarized in \Cref{table:observability}.
This section presents two combinations that yield unobservability, and two other cases resulting in attitude observability with single direction measurements. 
The same results can also be derived in deterministic sense as presented in Appendix \ref{app:deterministic}.

\subsection{Combinations with Unobservability}

We first present why the common IMU cannot estimate the full attitude with single direction measurements.
In a typical IMU, the angular velocity is measured in the body-fixed frame using a gyroscope, and the reference direction in the inertial frame, such as the direction of gravity or magnetic field, is measured in the body-fixed frame. 
As such it is a combination of the left-trivialized \eqref{eqn:dR_W} and the inertial direction measurement \eqref{eqn:F_I}.
Looking at the right column of \Cref{fig:prop} and the left column of \Cref{fig:mea}, it is clear why the attitude cannot be determined in this case: the direction about which the rotation is most uncertain, namely the first principal axis, remains unchanged in the inertial frame for both \eqref{eqn:dR_W} and \eqref{eqn:F_I}.

The above intuition is formulated formally in the next theorem.
\begin{theorem} \label{thm:non-estimable}
	Consider the two Bayesian attitude filters composed of
    \begin{itemize}
        \item right-trivialized angular velocity in the inertial frame  \eqref{eqn:F_R_kp} and body-fixed direction measurement \eqref{eqn:F_B}  
        \item left-trivialized angular velocity in the body-fixed frame \eqref{eqn:F_L_kp}, and inertial direction measurement \eqref{eqn:F_I} 
    \end{itemize}
    with the initial distribution $F_0=0_{3\times 3}$.
    For both cases, the attitude is not observable.
\end{theorem}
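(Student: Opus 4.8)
The plan is to reduce both claims to a single invariant: the posterior matrix parameter $F$ stays rank one throughout the entire filtering process, so that two of its proper singular values vanish. By \Cref{def:observability} the attitude fails to be observable precisely when $s_2+s_3=0$, and since a rank-one $F=USV^T$ has singular values $s_1,0,0$, establishing rank-one-ness immediately yields $s_2=s_3=0$ and hence unobservability. I would therefore prove, by induction over the alternating propagation and update steps, that starting from $F_0=0_{3\times 3}$ the matrix parameter always has the form $F=a\,w^T$ for the second filter, or $F=w'\,b^T$ for the first filter, where $a$ and $b$ are the fixed reference directions. The key structural inputs are that left-trivialized propagation leaves $U$ fixed while right-trivialized propagation leaves $V$ fixed, together with the fact that each direction update contributes a rank-one term along the corresponding fixed direction.

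For the left-trivialized / inertial filter, the base case follows from \eqref{eqn:SVD_FI+}: with $F^-=0$, a single inertial update \eqref{eqn:F_I} gives $F=\kappa a x^T$, whose unique nonzero left singular vector is $a$. For the inductive step, the left-trivialized propagation \eqref{eqn:UV_L_kp} leaves $U$ unchanged, so the first left singular vector is still $a$; meanwhile the concentration update \eqref{eqn:S_R_kp} together with \Cref{lemma:SD} sends $s_2=s_3=0$ to $0$, so the propagated prior retains the form $F^-=s_1\,a\,v^T$ with column space $\mathrm{span}\{a\}$. Because the inertial reference $a$ is fixed, the next update adds $\kappa a x^T$, a rank-one term in the same column space, giving $F^+=a(s_1 v+\kappa x)^T$, again rank one with left singular vector $a$. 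The induction closes, so every posterior has $s_2=s_3=0$ and is therefore not observable.

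The right-trivialized / body-fixed filter is handled by the dual argument, interchanging the roles of $U$ and $V$. Here \eqref{eqn:SVD_FB+} supplies the rank-one base case $F=\kappa y b^T$ with right singular vector $b$; the right-trivialized propagation \eqref{eqn:UV_R_kp} leaves $V$ unchanged, so the first right singular vector stays $b$, and \Cref{lemma:SD} again keeps $s_2=s_3=0$. Since the body-fixed reference $b$ is fixed, each update \eqref{eqn:F_B} adds $\kappa y b^T$, a rank-one term whose row space is $\mathrm{span}\{b\}$, preserving the form $F=w'\,b^T$. Thus the posterior is perpetually rank one with $s_2+s_3=0$, and the attitude is not observable.

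The step requiring the most care is verifying that propagation simultaneously preserves both the rank and the alignment of the surviving singular direction with the fixed measurement direction. Concretely, after each update one must re-read the proper singular value decomposition of $F=a\,w^T$ and confirm that $a$ is indeed its first left singular vector; this is immediate because a rank-one matrix $a\,w^T$ with $\|a\|=1$ has $a$ as its only left singular direction. The other subtlety is that the concentration map \eqref{eqn:S_R_kp} must send vanishing singular values to vanishing ones, which is exactly the first item of \Cref{lemma:SD}. Once these two facts are in place the induction is routine, and the unobservability of both filters follows.
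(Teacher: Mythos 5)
Your proposal is correct and follows essentially the same route as the paper's proof: an induction showing that the matrix parameter stays rank one with its surviving singular direction pinned to the fixed reference vector ($V_k e_1 = b$ for the right-trivialized/body-fixed filter, $U_k e_1 = a$ for the left-trivialized/inertial filter), using the fact that propagation preserves the relevant singular vector and the zero singular values via \Cref{lemma:SD}, while each update only adds a rank-one term in the same row (resp.\ column) space. Nothing further is needed.
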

\begin{proof}
	First consider the filter given by \eqref{eqn:F_R_kp} and \eqref{eqn:F_B}.
	The propagated uncertainty before the first measurement is $F_1^- = 0_{3\times 3}$, thus $F_1 = \kappa y_1b^T$ after conditioning the first measurement.
	As shown in \eqref{eqn:SVD_FB+}, $(s_2)_1=(s_3)_1=0$, and $V_1e_1 = b$.
	We proceed with induction.
	Suppose $(s_2)_k=(s_3)_k=0$, and $V_ke_1 = b$.
	Then by \eqref{eqn:UV_R_kp}, \eqref{eqn:S_R_kp} and \Cref{lemma:SD}, the propagated parameters before the next measurement still satisfy $(s_2)_{k+1}^-=(s_3)_{k+1}^- = 0$ and $V_{k+1}^-e_1=b$.
	Next consider the update $F_{k+1} = F_{k+1}^-+\kappa y_{k+1}b^T$, which can be written as
	\begin{align*}
		F_{k+1} &= (s_1)_{k+1}^- U_{k+1}^-e_1b^T + \kappa y_{k+1}b^T \\
		&= \left((s_1)_{k+1}^-U_{k+1}^-e_1 + \kappa y_{k+1}\right)b^T \triangleq ub^T,
	\end{align*}
	where $u\in\Re^3$.
	Let $U_{k+1} = \begin{bmatrix}\frac{u}{\norm{u}}&u'&u''\end{bmatrix}$, where $u',u''\in\Sph^2$ are arbitrarily chosen such that $U_{k+1}\in\SO$.
	Also let $S_{k+1}=\mathrm{diag}[\norm{u},0,0]$, and $V_{k+1} = \begin{bmatrix}b&b'&b''\end{bmatrix}$ as in \eqref{eqn:SVD_FB+}.
    Then $F_{k+1} = U_{k+1}S_{k+1}V_{k+1}^T = ub^T $ is the proper singular value decomposition of $F_{k+1}$, and we have shown that $(s_2)_{k+1}=(s_3)_{k+1}=0$ and $V_{k+1}e_1=b$.
	Therefore, $(s_2)_k=(s_3)_k=0$ for all $k\in\mathbb{N}$, and by \Cref{lemma:SD}, the attitude is not observable.
	The proof for the second case is similar.
\end{proof}

\subsection{Combinations with Observability}

Next, consider the combination of the left-trivialized \eqref{eqn:dR_W} and the body-fixed direction measurement \eqref{eqn:F_B}.
We consider a specific case where the true attitude evolves according to
\begin{align}
    R(t) = R_0 \exp(h\hat\Omega t) = \exp(\hat\omega t) R_0,\label{eqn:R_est}
\end{align}
with $R_0=I_{3\times 3}$ and $\Omega = \omega = -\frac{\pi}{2\sqrt{3}}[1, 1, 1]\in\Re^3$, which represents a rotation about a fixed axis. 
Initially, it is assumed that the attitude is completely unknown, i.e., $F_0 = 0_{3\times 3}$, as illustrated in \Cref{fig:est_LB_F1_prior}.
At $t=0$, a body-fixed direction measurement for a reference direction $b=e_1$, i.e., the first body-fixed axis, becomes available, and $F_0$ is updated by \eqref{eqn:F_B}. 
The resulting posterior distribution at $t=0$ is shown in \Cref{fig:est_LB_F1}, where the marginal distribution of each axis forms a circle normal to the first body-fixed axis that is along $\mathbf{e}_1$.
This is propagated through $t=1$ as presented in \Cref{fig:est_LB_F3,fig:est_LB_F5}.
While the true attitude rotates, the first principal axis of the attitude estimate remains at $\mathbf{e}_1$ according to \Cref{thm:ER_L}, similar to the right column of \Cref{fig:prop}.
Next, a new body-fixed direction measurement becomes available at $t=1$. 
As the reference direction $b$ (the red axis corresponding to the first body-fixed axis) has been rotated from its initial direction $\mathbf{e}_1$, it can resolve the one-dimensional ambiguity in the attitude estimate. 
The additional information provided by the new measurement, namely $\kappa y b^T$, is overlapped with the propagated prior distribution in \Cref{fig:est_LB_FN_mea}.
The key observation is that the circles constructed by the measurement are normal to the first body-fixed axis, and they are not parallel to the propagated distribution. 
Finally, the resulting posterior distribution integrating the propagated distribution with the measurement is presented in \Cref{fig:est_LB_FN_post}, where the ambiguity is removed to provide an estimate of the complete attitude. 
In short, this illustrates that the complete attitude can be estimated by two body-fixed single direction measurements over the attitude uncertainty distribution propagated by the left-trivialized \eqref{eqn:dR_W}.
This requires that the attitude is rotated such that the reference direction $b$ is rotated between two measurements. 
For example, if the angular velocity is $\Omega=e_1$, the second measurement cannot resolve the ambiguity. 

Next, we consider the last combination of the right-trivialized \eqref{eqn:dR_w} and the inertial direction measurement \eqref{eqn:F_I}. 
Same as above, the rotation motion is chosen as \eqref{eqn:R_est} and the attitude is completely unknown initially (\Cref{fig:est_RI_F1_prior}). 
The reference direction is chosen as $a=e_1$, i.e., the first inertial axis, and the resulting posterior distribution is presented in \Cref{fig:est_RI_F1},
which is propagated according to \Cref{thm:ER_R} through $t=1$.
As shown in \Cref{fig:est_RI_F3,fig:est_RI_F5}, the propagation over \eqref{eqn:dR_w} purely rotates the distribution without altering the shape. 
At $t=1$, another measurement becomes available, and the resulting information provided by the new measurement, namely  $\kappa ax^T$ is overlapped with the propagated distribution in \Cref{fig:est_RI_FN_mea}.
\Cref{fig:est_RI_FN_mea} looks to be a combination of \Cref{fig:est_LB_F5} and \Cref{fig:est_RI_F5}, and it is further identical to \Cref{fig:est_LB_FN_mea}.
However, they are contributed from the opposite sources: in \Cref{fig:est_RI_FN_mea}, the distribution normal to the first (red) body-fixed axis are the propagated one, and the other is from the measurement; but these are reversed in \Cref{fig:est_LB_FN_mea}.
The distribution from the new measurement forms circles normal to the inertial axis $\mathbf{e}_1$, and therefore, it resolves the ambiguity of the propagated distribution corresponding to the rotation about the first body-fixed axis. 
As such, the complete attitude can be estimated by integrating those as shown in \Cref{fig:est_RI_FN_post}.

\begin{figure}
    \centerline{
        \subfigure[Prior dist. at $t=0$]{
            \includegraphics[width=0.45\columnwidth, trim=150 80 120 60, clip]{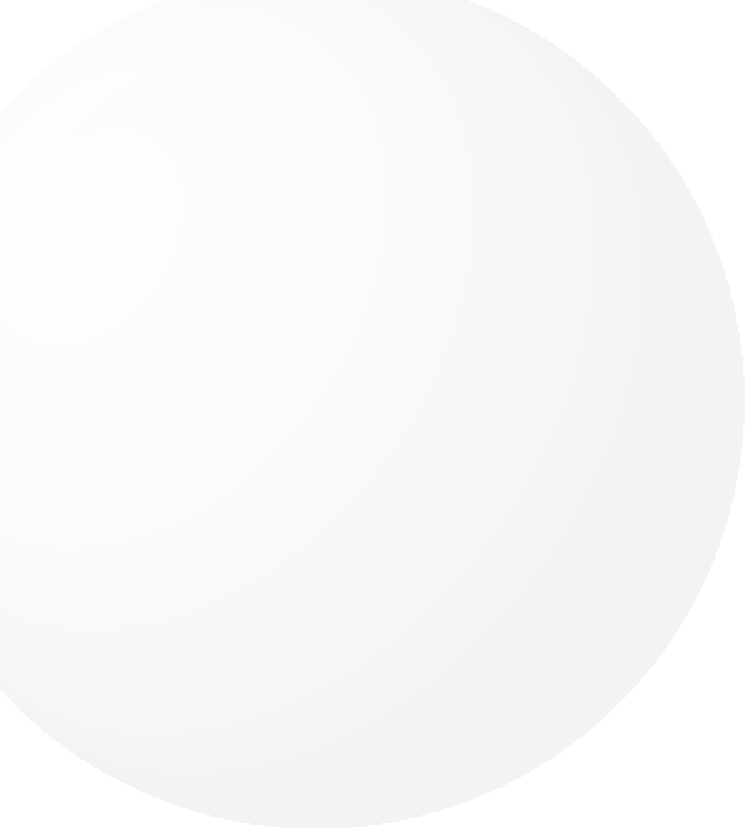}
            \begin{tikzpicture}[overlay]
                \draw[arrows={-Triangle[angle=30:4pt]}] (-2.9,0.25) -- ++(90:0.5);
                \draw[arrows={-Triangle[angle=30:4pt]}] (-2.9,0.25) -- ++(-30:0.5);
                \draw[arrows={-Triangle[angle=30:4pt]}] (-2.9,0.25) -- ++(210:0.5);
                \node at (-3.3,0.2) {\scriptsize $\mathbf{e}_1$};
                \node at (-2.5,0.2) {\scriptsize $\mathbf{e}_2$};
                \node at (-2.9,0.85) {\scriptsize $\mathbf{e}_3$};
            \end{tikzpicture}
            \label{fig:est_LB_F1_prior}
        }
        \subfigure[Posterior dist. at $t=0$]{
            \includegraphics[width=0.45\columnwidth, trim=150 80 120 60, clip]{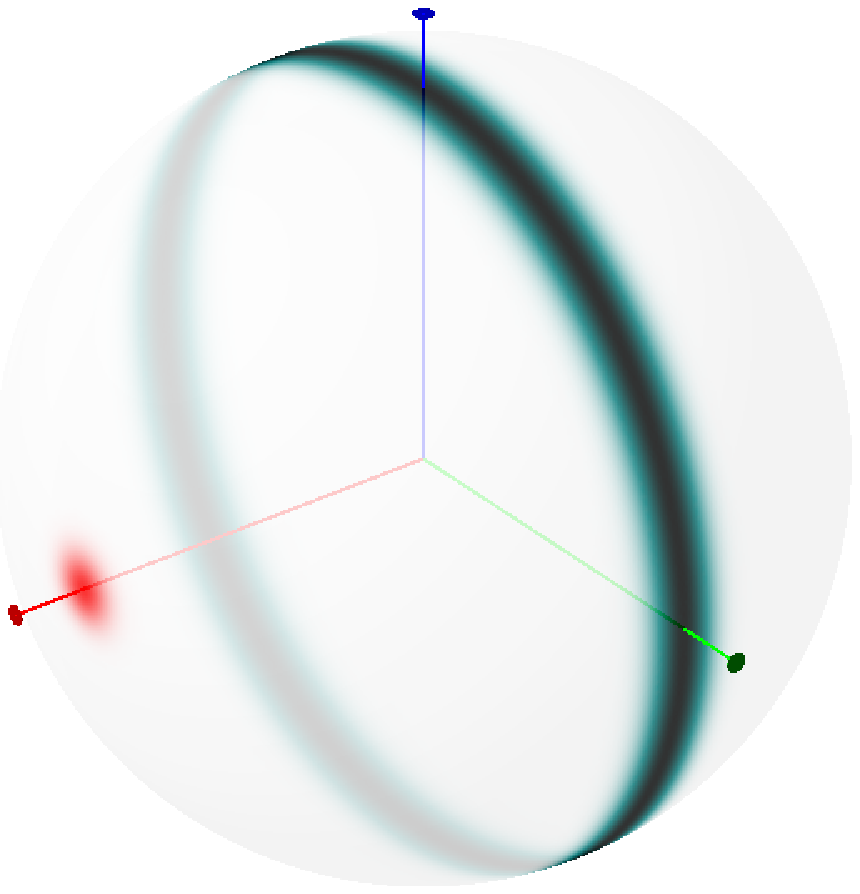}
            \label{fig:est_LB_F1}
        }
    }
    \centerline{
        \subfigure[Propagated dist. at $t=0.5$]{
            \includegraphics[width=0.45\columnwidth, trim=150 80 120 60, clip]{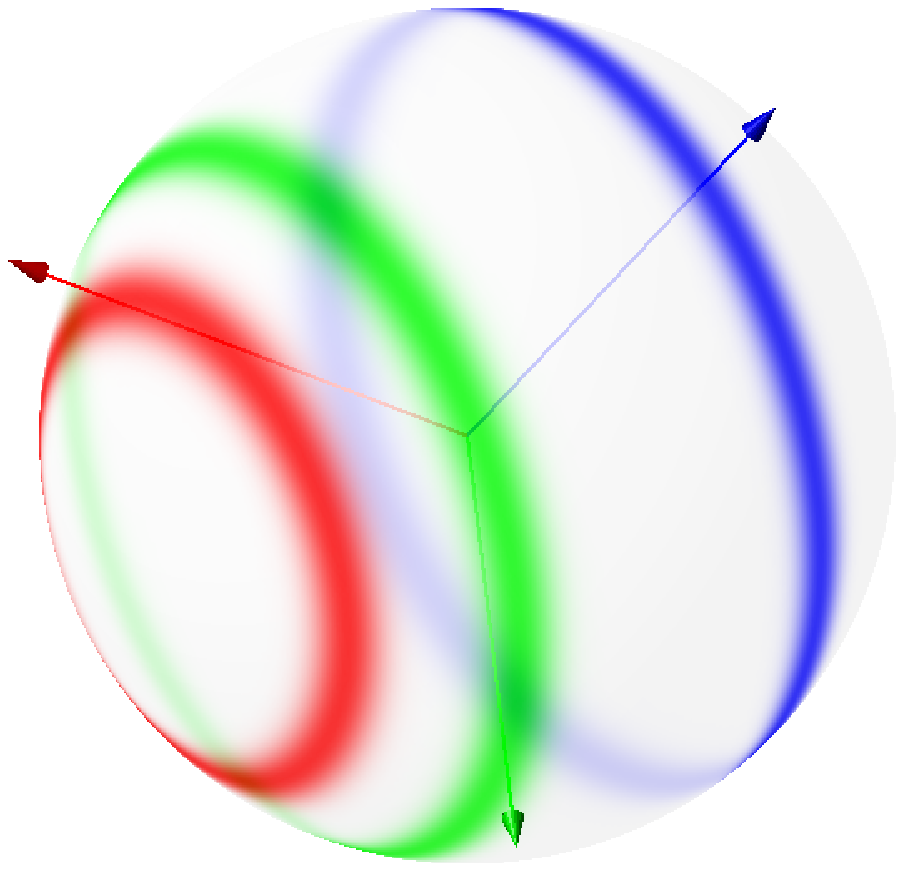}
            \label{fig:est_LB_F3}
        }
        \subfigure[Propagated dist. at $t=1$]{
            \includegraphics[width=0.45\columnwidth, trim=150 80 120 60, clip]{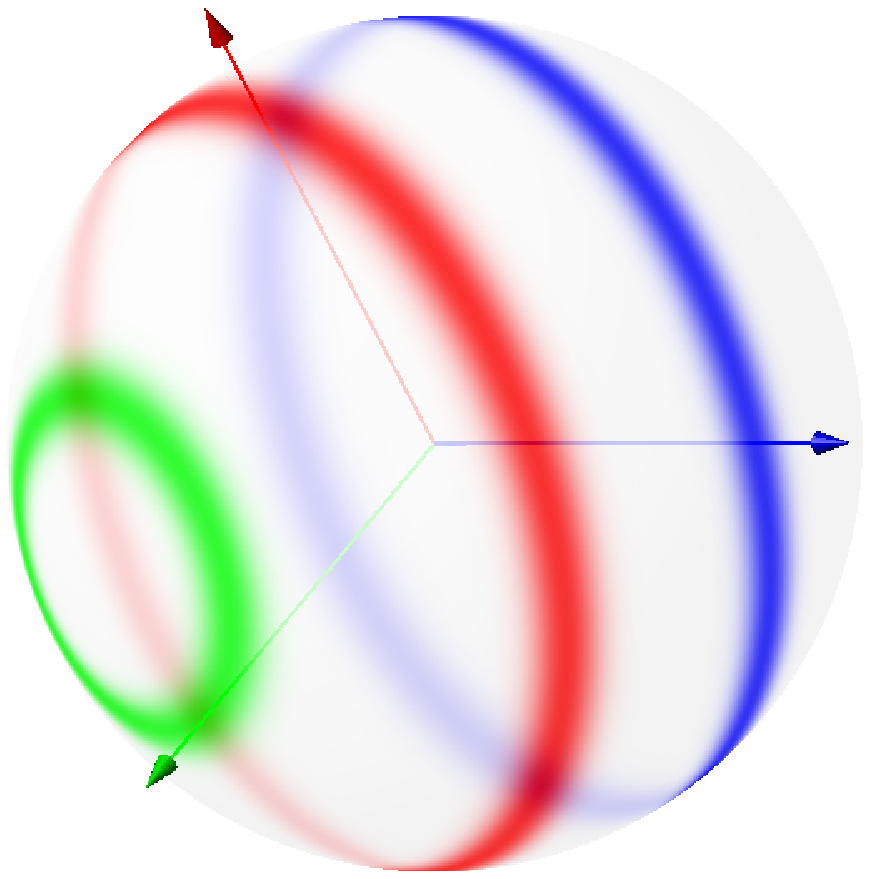}
            \label{fig:est_LB_F5}
        }
    }
    \centerline{
        \subfigure[Propagated dist. overlapped with measured dist. at $t=1$]{
            \includegraphics[width=0.45\columnwidth, trim=150 80 120 60, clip]{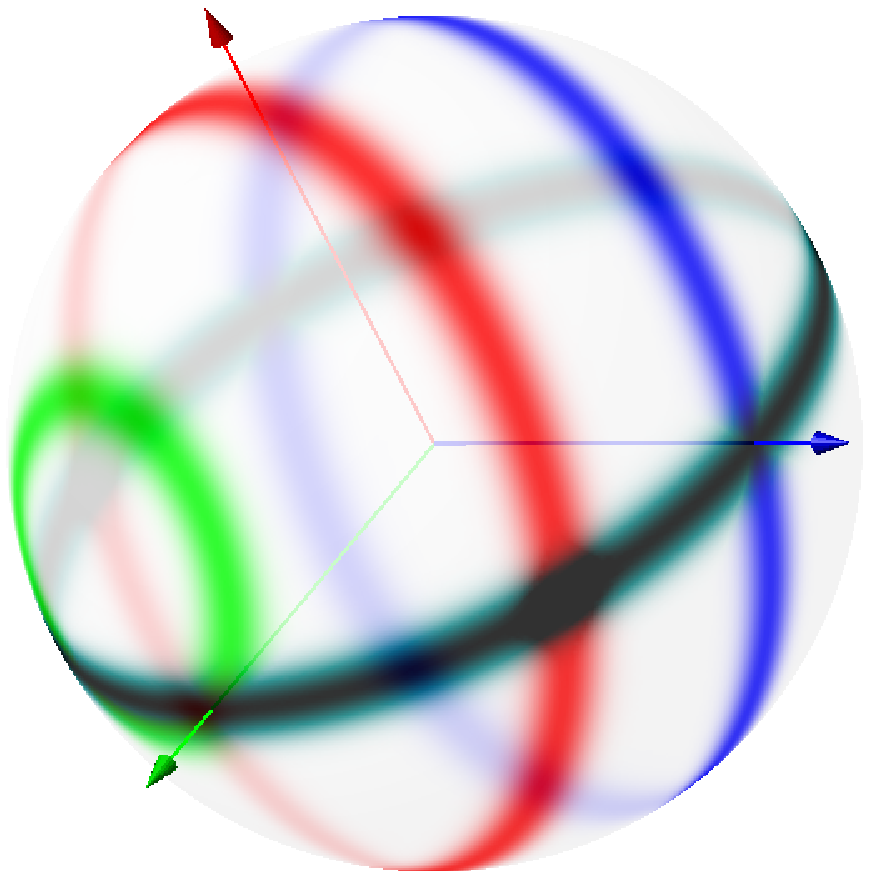}
            \label{fig:est_LB_FN_mea}
        }
        \subfigure[Posterior dist. at $t=1$]{
            \includegraphics[width=0.45\columnwidth, trim=150 80 120 60, clip]{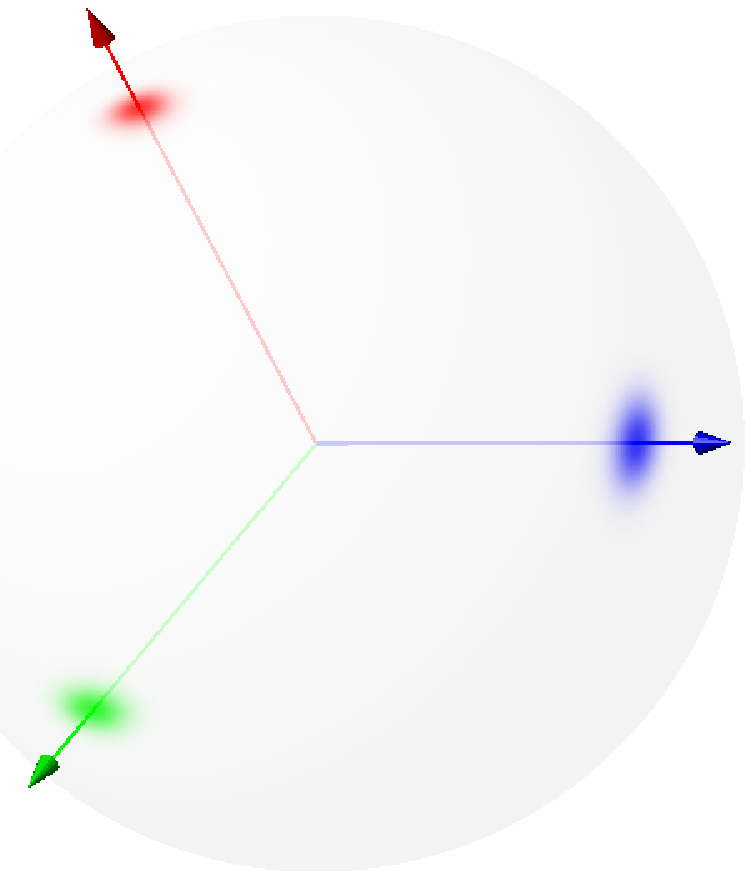}
            \label{fig:est_LB_FN_post}
        }
    }
    \caption{Estimation with the left trivialized \eqref{eqn:dR_W} and the body-fixed direction measurement \eqref{eqn:F_B}, where the complete attitude is estimated after incorporating two body-fixed single direction measurements.}\label{fig:est_LB}
\end{figure}

\begin{figure}
    \centerline{
        \subfigure[Prior dist. at $t=0$]{
            \includegraphics[width=0.45\columnwidth, trim=150 80 120 60, clip]{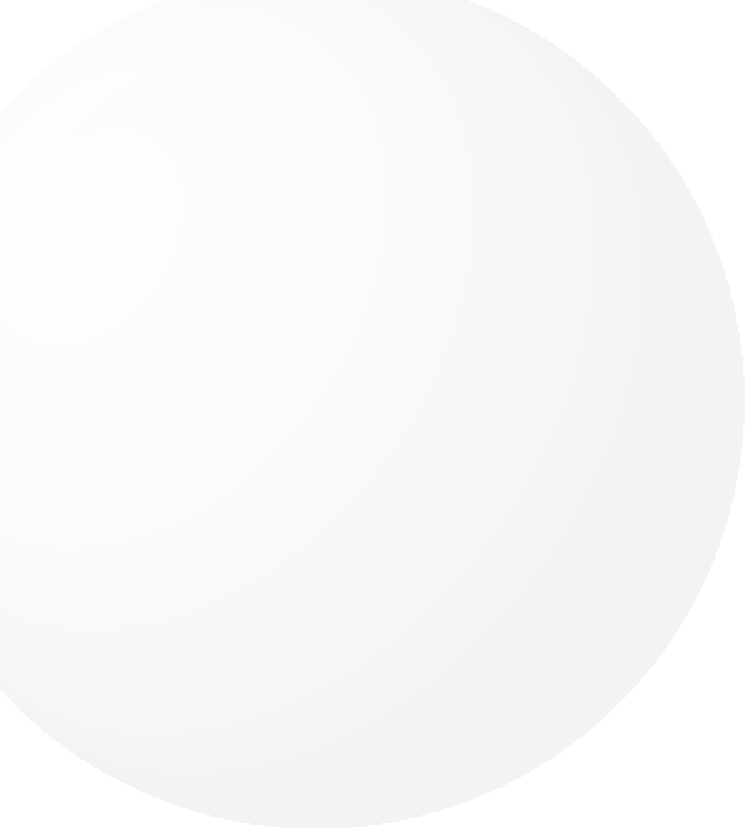}
            \begin{tikzpicture}[overlay]
                \draw[arrows={-Triangle[angle=30:4pt]}] (-2.9,0.25) -- ++(90:0.5);
                \draw[arrows={-Triangle[angle=30:4pt]}] (-2.9,0.25) -- ++(-30:0.5);
                \draw[arrows={-Triangle[angle=30:4pt]}] (-2.9,0.25) -- ++(210:0.5);
                \node at (-3.3,0.2) {\scriptsize $\mathbf{e}_1$};
                \node at (-2.5,0.2) {\scriptsize $\mathbf{e}_2$};
                \node at (-2.9,0.85) {\scriptsize $\mathbf{e}_3$};
            \end{tikzpicture}
            \label{fig:est_RI_F1_prior}
        }
        \subfigure[Posterior dist. at $t=0$]{
            \includegraphics[width=0.45\columnwidth, trim=150 80 120 60, clip]{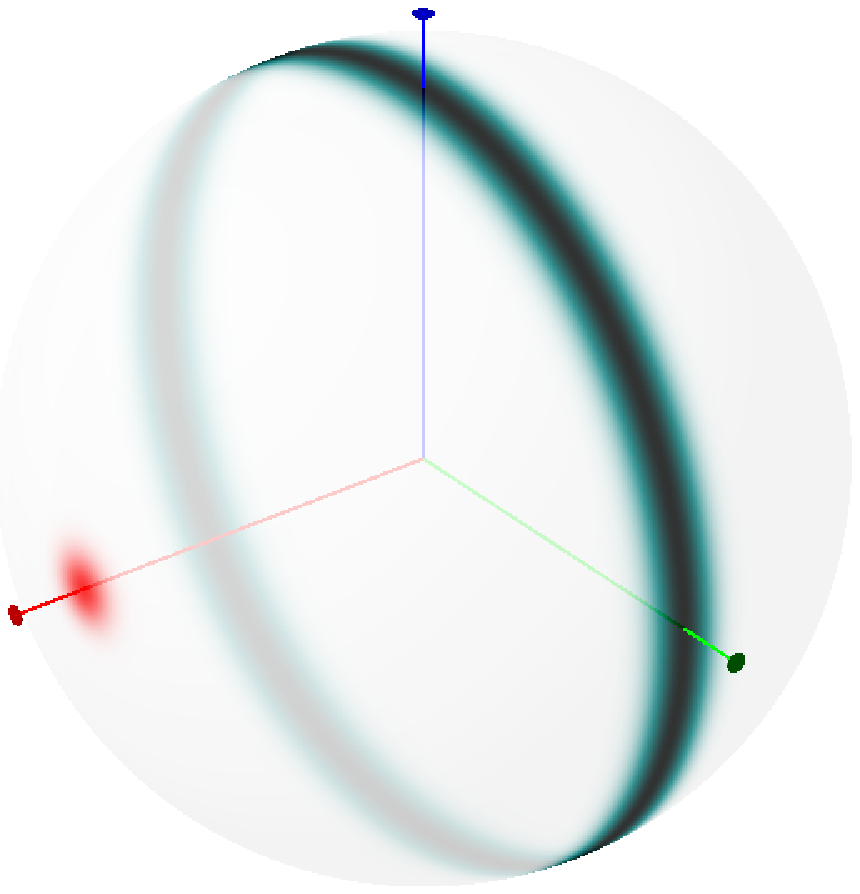}
            \label{fig:est_RI_F1}
        }
    }
    \centerline{
        \subfigure[Propagated dist. at $t=0.5$]{
            \includegraphics[width=0.45\columnwidth, trim=150 80 120 60, clip]{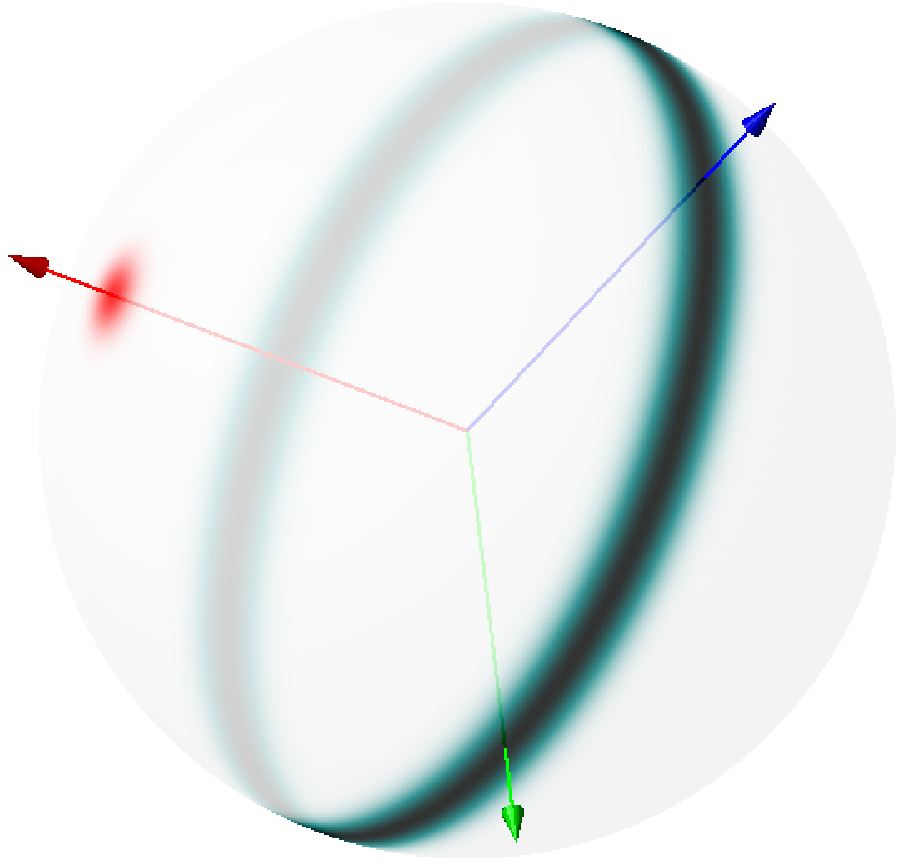}
            \label{fig:est_RI_F3}
        }
        \subfigure[Propagated dist. at $t=1$]{
            \includegraphics[width=0.45\columnwidth, trim=150 80 120 60, clip]{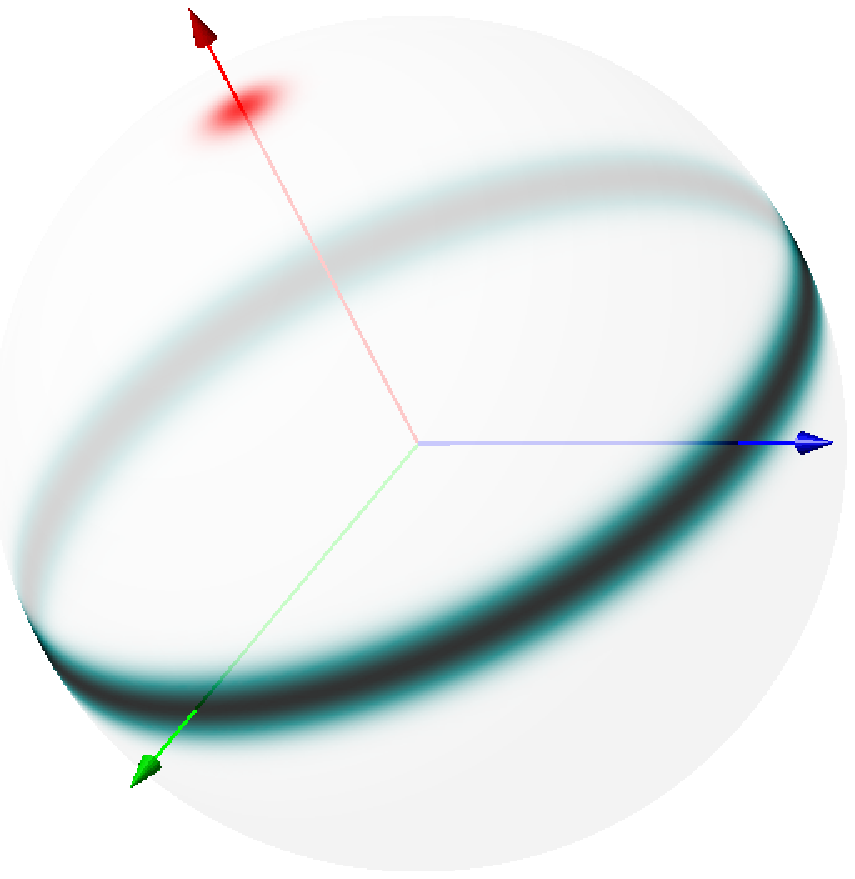}
            \label{fig:est_RI_F5}
        }
    }
    \centerline{
        \subfigure[Propagated dist. overlapped with measured dist. at $t=1$]{
            \includegraphics[width=0.45\columnwidth, trim=150 80 120 60, clip]{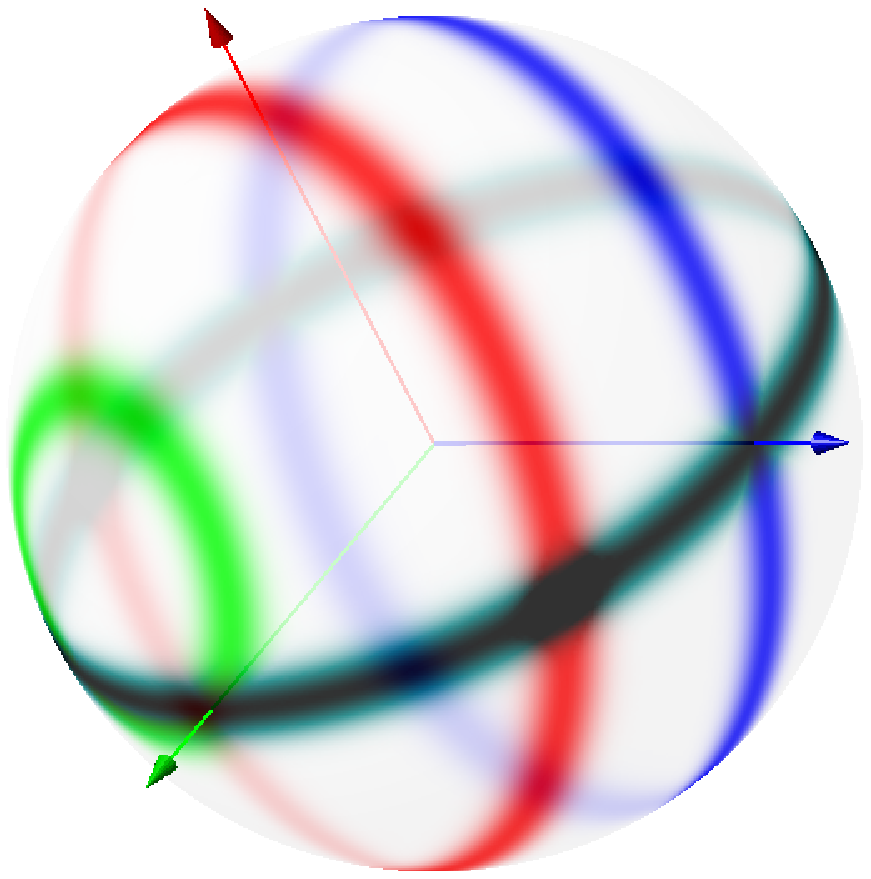}
            \label{fig:est_RI_FN_mea}
        }
        \subfigure[Posterior dist. at $t=1$]{
            \includegraphics[width=0.45\columnwidth, trim=150 80 120 60, clip]{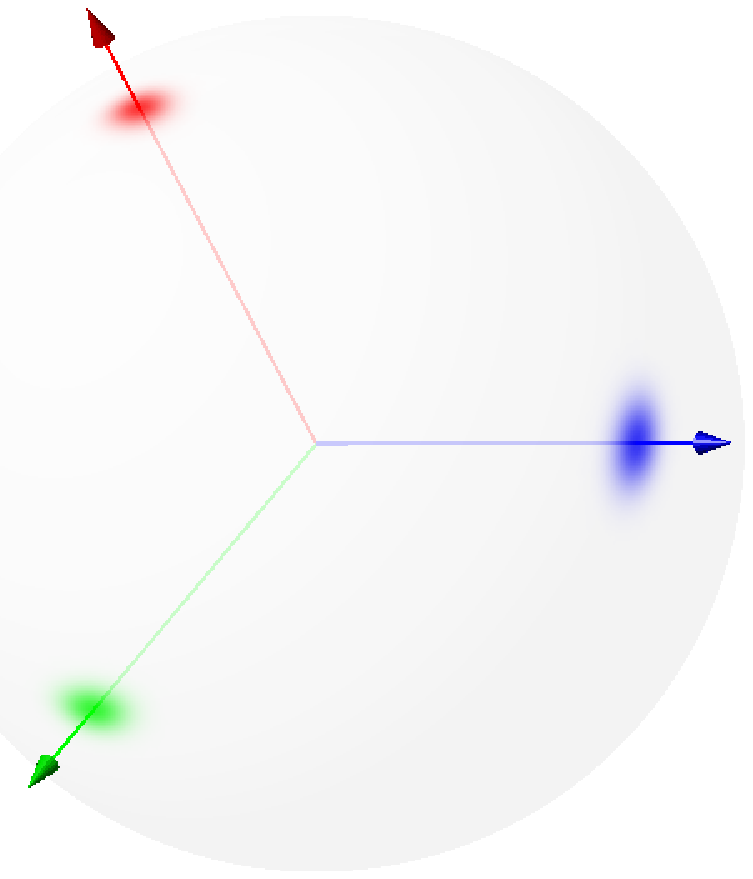}
            \label{fig:est_RI_FN_post}
        }
    }
    \caption{Estimation with the right trivialized \eqref{eqn:dR_w} and the inertial direction measurement \eqref{eqn:F_I}, where the complete attitude is estimated after incorporating two inertial single direction measurements.}\label{fig:est_RI}
\end{figure}

The above intuition is formulated formally in the next theorem.
\begin{theorem} \label{thm:observable}
	Consider the two Bayesian attitude filters composed of
    \begin{itemize}
        \item right-trivialized angular velocity in the inertial frame  \eqref{eqn:F_R_kp} and inertial direction measurement \eqref{eqn:F_I} 
        \item left-trivialized angular velocity in the body-fixed frame \eqref{eqn:F_L_kp}, and body-fixed direction measurement \eqref{eqn:F_B}  
    \end{itemize}
    with the initial distribution $F_0=0_{3\times 3}$.
    Suppose there is some $k_0$ such that $\omega_{k_0} \times a \neq 0$ for the first case, and $\Omega_{k_0} \times b \neq 0$ for the second case. Then the attitude is observable with probability one for both cases.
\end{theorem}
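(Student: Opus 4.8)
The plan is to show that, with probability one, the filtered matrix parameter $F$ eventually leaves the rank-one stratum in which \Cref{thm:non-estimable} trapped the unobservable filters, and to recognize that leaving it is exactly the observability condition. Indeed, by the equivalence stated just after \Cref{def:observability}, $R\sim\mathcal{M}(F)$ with $F=USV^T$ is observable iff $\trs{S}I_{3\times 3}-S\succ 0$, i.e. $s_2+s_3>0$; since $s_1\ge s_2\ge|s_3|\ge 0$, a parameter whose smallest singular value is $s_3=0$ is observable precisely when $s_2>0$, that is, when $F$ has rank at least two. So the whole task reduces to proving that the posterior $F$ attains rank two.

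I would track $F$ through the alternation of propagation and update as in the proof of \Cref{thm:non-estimable}, but now exhibit the mechanism that breaks rank one. Take the first case (right-trivialized \eqref{eqn:F_R_kp} with inertial update \eqref{eqn:F_I}). From $F_0=0$ the first update gives the rank-one $F=\kappa a x^T$ of \eqref{eqn:SVD_FI+}, with inertial principal axis $a$. While the attitude is still unobservable I write $F=s_1 u v^T$ with unit inertial axis $u$ and body axis $v$; by \eqref{eqn:UV_R_kp}, \eqref{eqn:S_R_kp} and \Cref{lemma:SD}, right-trivialized propagation sends $u\mapsto u^-=\exp(h\hat\omega_k)u$ while leaving the body axis and the vanishing $s_2=s_3=0$ untouched. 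The update then forms $F^+=s_1^- u^- v^T+\kappa a x^T$, a sum of two rank-one matrices with $s_1^-,\kappa>0$. The crux is the elementary fact that $pq^T+rs^T$ has rank two iff $\{p,r\}$ and $\{q,s\}$ are each linearly independent: hence $F^+$ has rank two, so $s_2>0$ and the attitude is observable, exactly when $u^-\neq\pm a$ and $x\neq\pm v$.

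The probability-one statement then splits along these two factors. The body factor $x\neq\pm v$ fails only on the two-point set $\{\pm v\}\subset\Sph^2$, which has probability zero because the measurement $x$ is absolutely continuous on $\Sph^2$ by \eqref{eqn:x|R}; thus, almost surely, the only way to stay rank one is $u^-=\pm a$ at \emph{every} step, i.e. each $\exp(h\hat\omega_k)$ maps the line $\Re a$ into itself. This is where the hypothesis enters: $\omega_{k_0}\times a\neq 0$ forces $\exp(h\hat\omega_{k_0})$ to carry the axis off $\Re a$, so at the update following the propagation over step $k_0$ the sum $F^+$ is rank two and observability is attained. Propagation alone can never do this, since by \Cref{lemma:SD} it maps $s_2=s_3=0$ to $s_2=s_3=0$; only a measurement taken when $u^-\neq\pm a$ breaks rank one. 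I expect the one delicate point to be exactly this geometric translation: strictly, $\omega_{k_0}\times a\neq 0$ rules out $\exp(h\hat\omega_{k_0})a=a$ only away from the measure-zero set of rotation angles in $2\pi\mathbb{Z}$, and $=-a$ only away from the $\pi$-rotations about an axis perpendicular to $a$. The cleanest remedy is to argue in continuous time via the axis flow $\dot u=\omega\times u$ with $u=a$, where $\frac{d^2}{dt^2}(u\cdot a)=-\|\omega\times a\|^2<0$ shows the axis genuinely departs $\Re a$; I would phrase the argument in that form, consistent with the deterministic development in Appendix~\ref{app:deterministic}.

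The second case follows by the transpose duality between \eqref{eqn:dR_w}--\eqref{eqn:F_I} and \eqref{eqn:dR_W}--\eqref{eqn:F_B}. For the left-trivialized \eqref{eqn:F_L_kp} with the body-fixed update \eqref{eqn:F_B}, equation \eqref{eqn:UV_L_kp} fixes $U$ and rotates the body axis by $\exp(-h\hat\Omega_k)$, while \eqref{eqn:F_B} adds $\kappa y b^T$ with the fixed body direction $b$. Transposing the argument, the rank-one carrier is $F=s_1 u v^T$ with $v^-=\exp(-h\hat\Omega_k)v$ started at $v=b$, and the update $F^+=s_1^- u (v^-)^T+\kappa y b^T$ becomes rank two as soon as $v^-\neq\pm b$ and $y\neq\pm u$; here $y$ carries the absolutely continuous measurement noise of \eqref{eqn:y|R}, and $\Omega_{k_0}\times b\neq 0$ supplies the transverse rotation of the body axis. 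Thus both filters become observable with probability one, completing the proof.
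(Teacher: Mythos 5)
Your proof is correct and follows essentially the same route as the paper's: both track the rank-one parameter $F=s_1uv^T$ through propagation (which rotates the inertial axis $u$ while \Cref{lemma:SD} keeps $s_2=s_3=0$) and show that the update at step $k_0$ yields a sum of two rank-one matrices that is almost surely rank two, your abstract criterion for the rank of $pq^T+rs^T$ simply replacing the paper's explicit $\Lambda$-matrix computation while producing the identical pair of independence conditions. The subtlety you flag---that $\omega_{k_0}\times a\neq 0$ rules out $\exp(h\hat\omega_{k_0})a=\pm a$ only for step sizes avoiding $2\pi$- and $\pi$-rotations---is glossed over in the paper's proof as well, so your continuous-time remedy is, if anything, a refinement rather than a divergence.
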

\begin{proof}
    Consider the first case.
	The posterior distribution after the first measurement is given by $F_1 = \kappa ax_1^T$.
	Suppose $k_0=1$ and denote $\exp(h\hat{\omega}_1) = \delta R$, then by \eqref{eqn:SVD_FI+}, $\eqref{eqn:UV_R_kp}$, \eqref{eqn:S_R_kp} and \Cref{lemma:SD}, $U_2^- = \delta R\begin{bmatrix}a&a'&a''\end{bmatrix}$, $S_2^- = \mathrm{diag}([(s_1)_2^-,0,0])$, $V_2^-e_1 = x_1$, where $(s_1)_2^->0$ satisfies
	\begin{align*}
		\frac{1}{c(S_{2}^-)}\frac{\partial c(S_2^-)}{\partial (s_1)_2^-} = (1-h\gamma^2) \frac{1}{c(S_1)}\frac{\partial c(S_1)}{\partial(s_1)_1}.
	\end{align*}
	Therefore the posterior distribution after the second measurement becomes
	\begin{align} \label{eqn:F2}
		F_2 = (s_1)_2^-\delta Rax_1^T + \kappa ax_2^T.
	\end{align}
	Let $\delta Ra = \alpha a + \alpha'a' + \alpha''a''$ for some $\alpha,\alpha',\alpha''\in\Re$. 
    Since $\omega_1\times a\neq 0$, $\alpha'$ and $\alpha''$ cannot both be zeros.
	Then
	\begin{align*}
		F_2 &= (s_1)_2^-(\alpha a + \alpha'a' + \alpha''a'')x_1^T + \kappa ax_2^T \\
		&= a((s_1)_2^-\alpha x_1 + \kappa x_2)^T + (s_1)_2^-\alpha'a'x_1^T + (s_1)_2^-\alpha''a''x_1^T.
	\end{align*}
	Let $(s_1)_2^-\alpha_1x_1 + \kappa x_2 \triangleq v$, and $v',v''$ be arbitrarily chosen such that $\begin{bmatrix}\frac{v}{\norm{v}}&v'&v''\end{bmatrix} \in \SO$.
	Also, let $x_1 = \beta\frac{v}{\norm{v}} + \beta'v' + \beta''v''$. 
    Note that $\beta_2$ and $\beta_3$ cannot both be zeros almost surely.
	Using these, $F_2$ is written as
	\begin{align*}
        F_2 &= \begin{bmatrix}a&a'&a''\end{bmatrix}  \Lambda \begin{bmatrix} \frac{v}{\norm{v}} & v' & v'' \end{bmatrix}^T,
	\end{align*}
    where $\Lambda\in\Re^{3\times 3}$ is
    \begin{align*}
        \Lambda = \begin{bmatrix} \norm{v} & 0 & 0 \\ (s_1)_2^-\alpha'\beta & (s_1)_2^-\alpha'\beta' & (s_1)_2^-\alpha'\beta'' \\ (s_1)_2^-\alpha''\beta & (s_1)_2^-\alpha''\beta' & (s_1)_2^-\alpha''\beta'' \end{bmatrix}.
    \end{align*}
    Since $F_2$ is obtained by multiplying rotation matrices to $\Lambda$, it is starightfoward to see that $F_2$ and $\Lambda$ share the same proper singular values. 
    We have $\det(\Lambda) = 0$, so there is at least one zero singular value.
	However, the rank of $\Lambda$ is two almost surely, as at least one element of the right bottom 2-by-2 block is nonzero.
    Thus, $\Lambda$ has only one zero singular value.
	By the definition of proper singular value decomposition, this concludes $\trs{S_2}I_{3\times 3}-S_2$ is positive-definite, and therefore the attitude is observable with probability one.
	
	Next suppose $k_0>1$. By \eqref{eqn:F2}, $F_2 = a((s_2)_1^-x_1+\kappa x_2)^T$ since $\delta R$ does not rotate $a$, which means both the uncertainty propagation and update steps leave $U_ke_1 = a$, $(s_1)_k>0$, and $(s_2)_k=(s_3)_k=0$.
	Thus the argument in the last paragraph still applies at time $t=t_{k_0}$.
	The proof for the other filter is similar.
\end{proof}

Finally, it should be noted that although we have assumed the attitude follows matrix Fisher distribution which seems an unwanted constraint, it is natural in the sense that if the initial distribution is uniform, and the angular velocity noise in \eqref{eqn:dR_w} and \eqref{eqn:dR_W} is zero, i.e. $H(t)=0$, then the attitude conditioned by single direction measurements follows exactly the matrix Fisher distribution.
Moreover, \Cref{sec:UP} and \Cref{lem:MMSE} together show that the observability of propagated attitude is not affected by the noise.
Therefore, we suppose that the result presented in  \Cref{table:observability} is not specific to the filter; instead, it is inherent to the observed stochastic dynamical system given by \eqref{eqn:dR_w}, \eqref{eqn:dR_W} and \eqref{eqn:x|R}, \eqref{eqn:y|R}.
Indeed, it is shown by simulation and experiment in the next two sections that multiplicative extended Kalman filter also exhibits the same observability. 

\begin{table*}
	\caption{Estimation error (\SI{}{deg}) \label{table:error}}
	\centering
	\begin{tabular}{l|l|c|c|c|c|c|c|c|c}
		\hline
		\multicolumn{2}{l|}{estimator} & \multicolumn{4}{c|}{matrix Fisher} & \multicolumn{4}{c}{MEKF} \\ \hline
		\multicolumn{2}{l|}{case (\textbf{observable}, unobservable)} & \textbf{AVI\_RVI} & AVI\_RVB & AVB\_RVI & \textbf{AVB\_RVB} & \textbf{AVI\_RVI} & AVI\_RVB & AVB\_RVI & \textbf{AVB\_RVB} \\ \hline
		\multirow{2}{*}{simulation} & full error $\pm$ s.d. & 6.57$\pm$0.25 & 90$\pm$0 & 90$\pm$0 & 5.71$\pm$0.15 & 7.02$\pm$0.50 & 135$\pm$24 & 89$\pm$37 & 6.06$\pm$0.27 \\
		& partial error $\pm$ s.d. & - & 3.33$\pm$0.06 & 3.34$\pm$0.06 & - & - & 3.33$\pm$0.06 & 3.33$\pm$0.06 & -  \\ \hline
		\multirow{2}{*}{experiment} & full error & 7.15 & 90.29 & 90.41 & 1.74 & 15.92 & 161.21 & 93.32 & 18.94 \\
		& partial error & - & 0.63 & 3.02 & - & - & 2.95 & 6.43 & -  \\ \hline
	\end{tabular}
\end{table*}

\section{Simulations}

In this section, we show the attitude observability through numerical simulations with the estimator using the matrix Fisher distribution described in Sections III and IV, and the conventional multiplicative extended Kalman filter (MEKF).

\subsection{Simulation settings}

\begin{figure}
	\centering
	\includegraphics{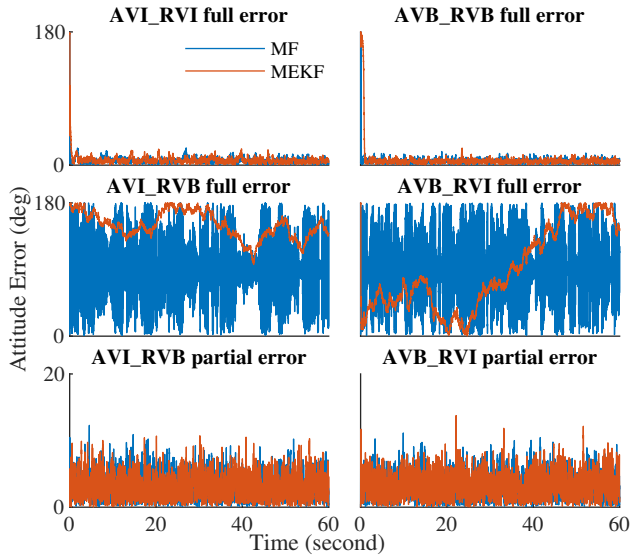}
	\caption{Attitude errors for the matrix Fisher (MF) estimator and MEKF in four combinations of angular velocity and reference vector measurements. \label{fig:attitudeError}}
\end{figure}

We consider a rigid body rotating about its third body-fixed axis at \SI{6}{\radian\per\second}, which simultaneously rotates about the second inertial axis at \SI{1}{\radian\per\second}.
The angular velocity is measured at \SI{150}{\hertz} either in the inertial frame or body-fixed frame, with the isotropic random walk noise of $H = \gamma I_{3\times 3}$ where $\gamma = $ \SI{10}{deg/\sqrt{\second}}.
The reference vector is set as $[1,0,0]$, expressed either in the inertial or body-fixed frame, and measured at \SI{30}{\hertz}.
For the matrix Fisher estimator, the measurement noise is formulated as in \eqref{eqn:x|R} or \eqref{eqn:y|R} with $\kappa=200$, 
and the initial attitude distribution is uniform, i.e., $F_0=0_{3\times 3}$.
The noise parameters and initial conditions for MEKF are chosen similarly as the matrix Fisher estimator.

The full attitude error is defined as the angle between the estimated attitude and true attitude.
For the two unobservable combinations, a partial attitude error which neglects the unobserved degree of freedom is also studied.
More specifically, if the reference vector is known in the inertial frame as $a$, then the partial attitude error is defined as $\arccos(R_t^Ta \cdot R^Ta)$, where $R_t$ is the true attitude;
if the reference vector is known in the body-fixed frame as $b$, the partial attitude error is defined as $\arccos(R_tb \cdot Rb)$.
The four combinations of angular velocity and reference vector measurements are labeled as follows:

\begin{center}
\footnotesize
	\begin{tabular}{l|cc}
		\diagbox[width=10em]{ref. vec.}{ang. vel.} & body-fixed frame & inertial frame \\ \hline
        body-fixed frame & \textbf{AVB\_RVB} & AVI\_RVB \\
        inertial frame & AVB\_RVI & \textbf{AVI\_RVI}
	\end{tabular}
\end{center}
where the boldface font indicates the cases with observability. 

For each case, 100 Monte Carlo simulations with the simulation period of \SI{60}{\second} are carried out.
The attitude error is first averaged across all timestamps in one simulation, and further averaged across simulations.

\subsection{Results}

\begin{figure}
	\centering
	\includegraphics{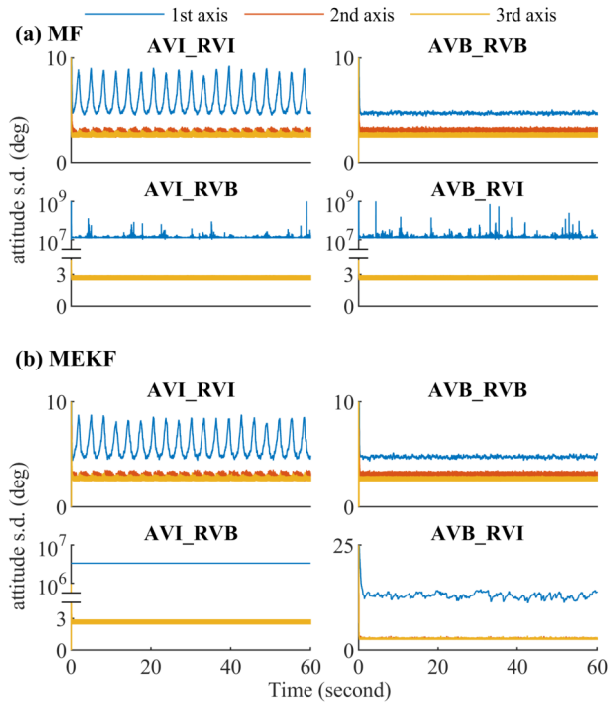}
	\caption{Attitude standard deviations for the matrix Fisher (MF) estimator and MEKF.
		In the ``RVI'' and ``RVB'' cases, the attitude covariance matrix is expressed in the inertial and body-fixed frames respectively.
		For the MF filter, $(\tr{S}I_{3\times 3}-S)^{-1}$ is used as the attitude covariance matrix in the principal axes frame. \label{fig:attitudeStd}}
\end{figure}

The attitude estimation errors are summarized in \Cref{table:error}.
It is clearly shown that for the two observable cases (AVI\_RVI and AVB\_RVB), the full attitude can be estimated with the average error of about \SI{6}{\degree}.
On the other hand, for the two unobservable cases (AVI\_RVB and AVB\_RVI), the full attitude error is around \SI{90}{\degree}, and the partial attitude error is around \SI{3.3}{\degree}.
For a more straightforward comparison, the time evolution of attitude errors for a single simulation is shown in \Cref{fig:attitudeError}, where it is illustrated that for the two observable cases, the full attitude error quickly converges from \SI{180}{\degree} to around zero;
whereas for the two unobservable cases, the full attitude error never converges, but the partial attitude error remains low throughout the simulation since the initial partial error is zero.

The attitude observability is also indicated by the estimated dispersion, quantified as the standard deviation of the error rotation vector as shown in \Cref{fig:attitudeStd}.
For the two observable cases, the standard deviations along all three axes remain below \SI{10}{\degree}, with the rotation about the reference vector (the first axis $e_1$) a little bit larger than the two other axes.
For the two unobservable cases, the standard deviation along the reference vector for the matrix Fisher estimator is mostly above $10^7$ \SI{}{deg}, which  indicates a uniform distribution.
For the MEKF, if the reference vector is known in the body-fixed frame, the standard deviation along the reference vector is similar to the matrix Fisher estimator, which is above $10^6$ \SI{}{deg}.
However, if the reference vector is known in the inertial frame, the standard deviation is around \SI{15}{\degree}, which is only marginally larger than the observable cases.
This discrepancy is caused by that the error rotation vector is expressed in the body-fixed frame, rather than in the inertial frame.
And if the reference vector is known in the inertial frame, MEKF has been shown to apply some slight but erroneous corrections to the rotation about the reference vector due to the linearization of the measurement function~\cite{li2013high}.

\begin{figure}
	\centerline{
		\subfigure[Hardware configuration with reflective markers for a VICON motion capture system]{
			\includegraphics[height=0.19\textwidth]{./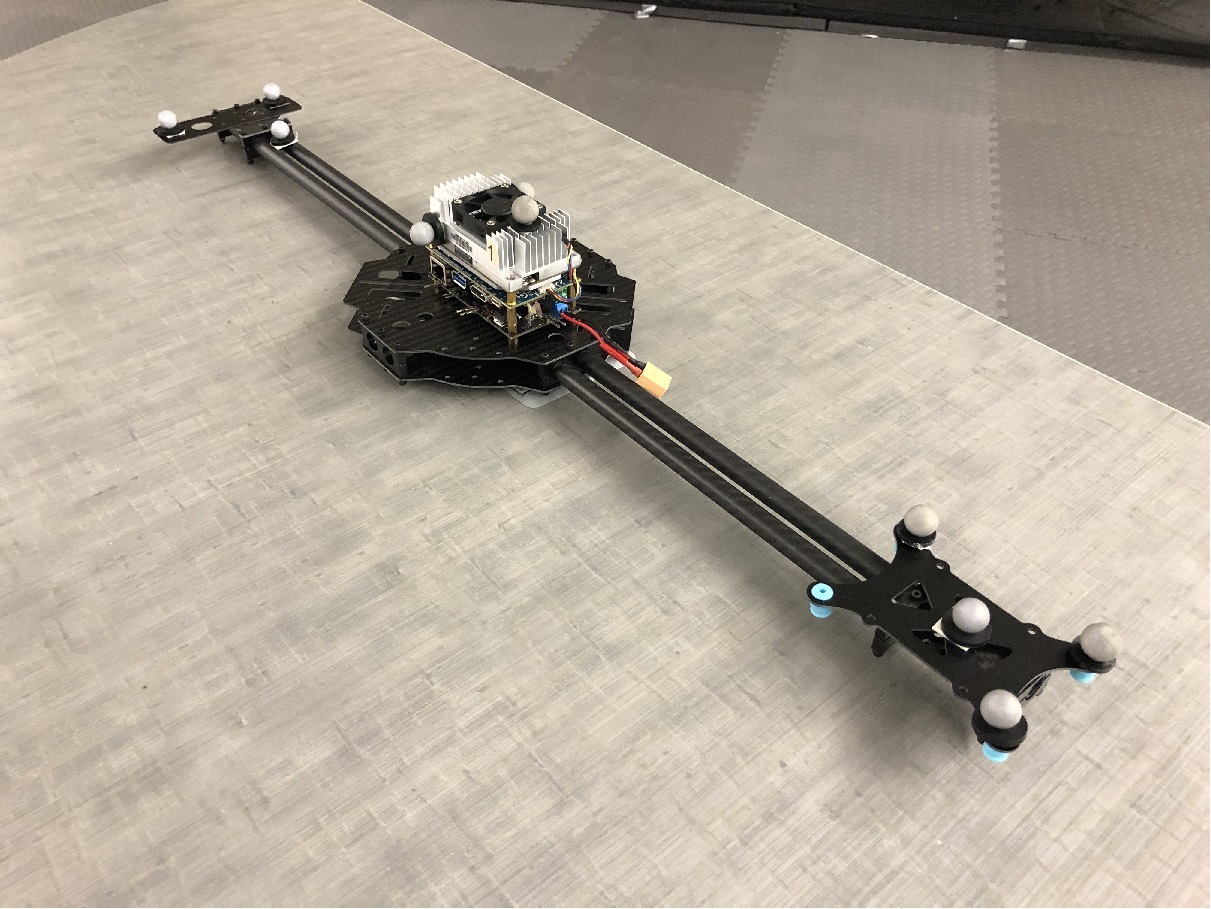}
		}
		\hfill
		\subfigure[Onboard computing module connected to IMU through a custom-made printed circuit board]{
			\includegraphics[height=0.19\textwidth]{./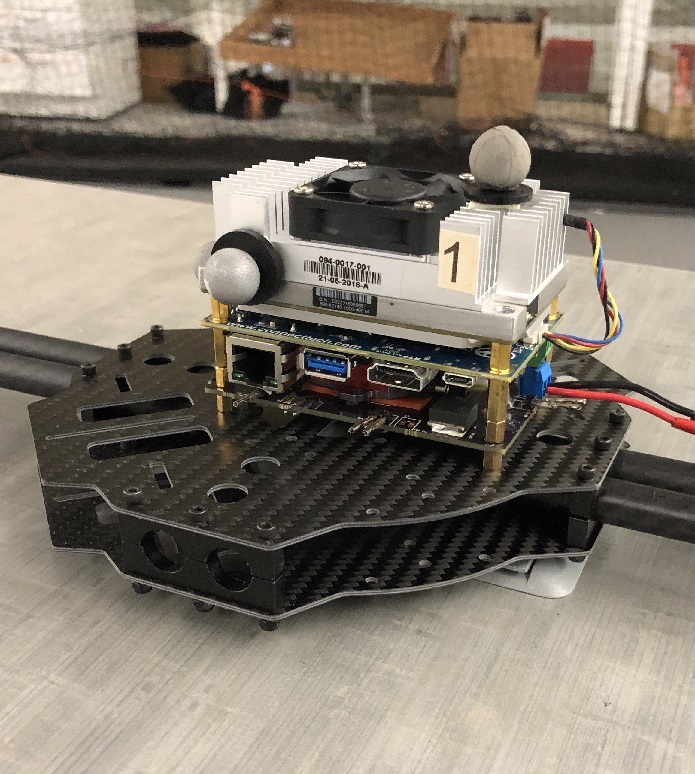}
		}
	}
	\caption{Hardware platform for experiments}\label{fig:exp_hardware}
\end{figure}

\section{Experiments}

The attitude observability was also validated through experiments.
We use a custom-made hardware platform, which has been developed for autonomous unmanned aerial vehicles, to collect measurements while moving it with hands.
An external VICON motion capture system detects reflective markers attached to the platform to determine its attitude, which is used as the ground truth attitude.
A 9-axis inertial measurement unit (VectorNav VN100) is attached to the platform, and the onboard gyroscope provides the angular velociy measurement in the body-fixed frame, which is also converted into the angular velocity measurement in the inertial frame using the ground truth attitude. 
For the inertial direction measurement, the direction of gravity is measured by the accelerometer in IMU.
Moreover, for the body-fixed direction measurement, two additional markers are attached to the vehicle as a known and fixed reference vector in the body-fixed frame, which is measured by the Vicon motion system in the inertial frame.
All Vicon and IMU measurements are synchronized and sampled at \SI{100}{\hertz}, using an onboard Nvidia Jetson TX2 computing module.
The platform was rotated around its roll, pitch and yaw axes during the data collection.
The matrix Fisher estimator and MEKF are run off-board using the collected experimental data, with the single vector measurement update applied at \SI{20}{\hertz}.
The noise parameters and initial conditions are set the same as in the simulation.
Note that these are not carefully tuned for the specific hardware, since the main objective is to verify the observability. 

\begin{figure}
	\centering
	\includegraphics{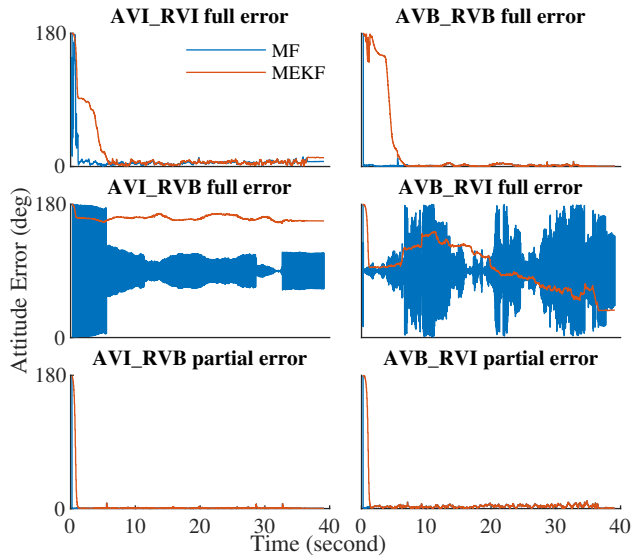}
	\caption{Attitude errors for and matrix Fisher filter and MEKF in experiment. \label{fig:attitudeError-Exp}}
\end{figure}

The attitude errors for the four combinations of measurements are presented in \Cref{table:error} and \Cref{fig:attitudeError-Exp}.
Similar with the simulation results, for the two observable cases (AVI\_RVI and AVB\_RVB) the full attitude error converges to around zero within \SI{10}{\second}; whereas for the two unobservable cases (AVI\_RVB and AVB\_RVI) only the partial error converges. 

\section{Conclusions}

This paper addresses the fundamental question whether the attitude of a rigid body is observable with single direction measurements when combined with stochastic attitude kinematics. 
By observing that the attitude uncertainties are propagated distinctively depending on how the angular velocity measurements are resolved, this paper has discovered two particular cases where the attitude is observable with multiple measurements of a single, fixed reference direction, which has been widely accepted to be impossible. 
This is further studied by formulating stochastic attitude observability through information-theoretic analysis, 
and it is also validated by numerical examples and experimental results. 
For future directions, the effect of a gyro bias on the observability remains to be studied, although preliminary numerical simulations indicate that the presented observability still holds under the bias. 

\appendix

\subsection{Proof of \Cref{thm:ER_R}} \label{app:ER}


Consider a matrix differential equation of $X(t)\in\Re^{n\times n}$,
\begin{align}
    d X(t) = A(t) X(t)  dt,\label{eqn:X_dot}
\end{align}
where $A(t)\in\Re^{n\times n}$ is prescribed, and the initial condition is given by $X(0)=X_0\in\Re^{n\times n}$. 

From the Magnus expansion~\cite{BlaCasPR09}, the solution of \eqref{eqn:X_dot} can be written in terms of matrix exponential as
\begin{align}
    X(t) = \exp C(t) X_0,\label{eqn:X}
\end{align}
where $C(t)$ is the solution of
\begin{align}
    \dot C(t) = \dexp^{-1}_{C(t)} A(t),\quad C(0) = 0.\label{eqn:C_dot}
\end{align}
The operator $\dexp_C^{-1}A$ for $A$ is given by
\begin{align*}
    \dexp^{-1}_{C} A = \sum_{k\geq 0} \frac{B_k}{k!}\ad^k_{C} A,
\end{align*}
where $B_k$ are the Bernoulli numbers, defined by $\sum_{k\geq 0} \frac{B_k}{k!}x^k = \frac{x}{e^x-1}$.
Therefore, \eqref{eqn:C_dot} is expanded into
\begin{align*}
	\dot C= A-\frac{1}{2}[C, A] + \frac{1}{12}[C,[C, A]] + \mathcal{O}(C^4,A),
\end{align*}
Applying the Picard fixed point iteration starting with $C(0)=0$, we have
\begin{align}
    C(t) = C_1(t) + C_2(t) + \mathcal{O}(t^3), \label{eqn:Magnus_B} 
\end{align}
with
\begin{align*}
    C_1(t) & = \int_0^t A(\tau) d\tau,\\
    C_2(t) & = -\frac{1}{2} \int_0^t \bracket{ \int_0^\tau A(\sigma)d\sigma, A(\tau)} d\tau.
\end{align*}
In short, the solution of \eqref{eqn:X_dot} is given by \eqref{eqn:X}, where $C(t)$ is expanded as \eqref{eqn:Magnus_B}.

Next, we find the solution of the stochastic differential equation \eqref{eqn:dR_w}.
We first rewrite it into an equivalent form such that the drift term $\omega dt$ is eliminated, to which the Magnus expansion is applied. 
\begin{prop}
For a given fixed $t$, define $X(\tau)\in\SO$ be
\begin{align}
    X(\tau) =  \exp (-\hat\phi_R (\tau,t))R(\tau), \label{eqn:X_R}
\end{align}
where $\phi_R(\tau, t)\in\Re^3$ is 
\begin{align}
    \hat\phi_R(\tau, t) = \int_{t}^\tau \dexp^{-1}_{\hat\phi_R(\sigma, t)} \hat\omega(\sigma) d\sigma. \label{eqn:phi_R_a}
\end{align}
Then, $X(\tau)=R(t)$ when $\tau =t$ and $X(\tau)$ satisfies the following stochastic differential equation:
\begin{align}
    dX(\tau) =\{ \exp(-\hat\phi_R(\tau, t)) H(\tau) dW \}^\wedge X(\tau ) ,\label{eqn:SDE_XR}
\end{align}
\end{prop}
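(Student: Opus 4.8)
The plan is to regard $\Psi(\tau)\triangleq\exp(-\hat\phi_R(\tau,t))$ as a deterministic, time-varying rotation that strips the drift $\hat\omega\,dt$ out of \eqref{eqn:dR_w}, and then to verify directly that the transformed process $X=\Psi R$ satisfies \eqref{eqn:SDE_XR}. This is the stochastic analogue of the Magnus substitution \eqref{eqn:X} used for the deterministic system \eqref{eqn:X_dot}, now carried out in the Stratonovich setting where the ordinary Leibniz rule is available.

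First I would dispatch the initial condition, which is immediate: evaluating the defining integral \eqref{eqn:phi_R_a} at $\tau=t$ gives $\hat\phi_R(t,t)=0$, hence $\Psi(t)=I_{3\times 3}$ and $X(t)=R(t)$.

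The substantive step is computing $\dot\Psi$. Applying the fundamental theorem of calculus to \eqref{eqn:phi_R_a} yields $\dot{\hat\phi}_R(\tau,t)=\dexp^{-1}_{\hat\phi_R(\tau,t)}\hat\omega(\tau)$, equivalently $\dexp_{\hat\phi_R}(\dot{\hat\phi}_R)=\hat\omega$. I would then invoke the left-trivialized derivative of the matrix exponential, $\exp(A)^{-1}\frac{d}{d\tau}\exp(A)=\dexp_{-A}(\dot A)$, with $A=-\hat\phi_R$, to obtain
\begin{align*}
    \Psi^{-1}\dot\Psi = \dexp_{\hat\phi_R}(-\dot{\hat\phi}_R) = -\hat\omega,
\end{align*}
so that $\dot\Psi=-\Psi\hat\omega$. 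Fixing the sign and the left-versus-right convention of $\dexp$ correctly is the part most prone to error and is where I expect the main obstacle to lie; it is essentially the only nontrivial computation in the argument.

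Finally, since \eqref{eqn:dR_w} is a Stratonovich equation and $\Psi$ is smooth and deterministic, the ordinary product rule applies: $dX=\dot\Psi\,R\,d\tau+\Psi\,dR$. Substituting $dR=(\hat\omega\,d\tau+(H dW)^\wedge)R$ from \eqref{eqn:dR_w} together with $\dot\Psi=-\Psi\hat\omega$, the two drift contributions $-\Psi\hat\omega R\,d\tau$ and $\Psi\hat\omega R\,d\tau$ cancel, leaving $dX=\Psi(HdW)^\wedge R$. Writing $R=\Psi^T X$ and conjugating the hat by $\Psi\in\SO$ via the identity $\widehat{Rx}=RxR^T$, one gets $\Psi(HdW)^\wedge\Psi^T=(\Psi HdW)^\wedge$, which recovers exactly $dX=\{\exp(-\hat\phi_R(\tau,t))H(\tau)dW\}^\wedge X$, namely \eqref{eqn:SDE_XR}. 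The only other point worth flagging is the justification that no It\^o-type correction term arises, which is precisely the reason \eqref{eqn:dR_w} was posed in the Stratonovich sense so that the chain and product rules hold classically.
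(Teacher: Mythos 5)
Your proposal is correct and follows essentially the same route as the paper: show $\hat\phi_R(t,t)=0$, differentiate $\exp(-\hat\phi_R(\tau,t))$ via the derivative of the exponential map so that the $\dexp^{-1}$ in \eqref{eqn:phi_R_a} makes the drift cancel, and apply the Stratonovich product rule together with $\widehat{Qx}=Q\hat xQ^T$ to land on \eqref{eqn:SDE_XR}. Your explicit handling of the $\dexp$ sign convention and the final conjugation step is consistent with the paper's (more tersely stated) computation.
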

\begin{proof}
    Since $\phi_R(t,t)=0$, it is straightforward to show $X(t)=R(t)$. 
    Take the differentiation of $X$, we have
    \begin{align*}
        dX(\tau) & =\exp(-\hat\phi_R(\tau,t))  \dexp_{\hat\phi_R(\tau, t)} \deriv{-\hat\phi_R(\tau,t)}{\tau} R(\tau) \nonumber \\
                 & \quad + \exp(-\hat\phi_R(\tau,t)) dR(\tau),
    \end{align*}
    where $\dexp_C$ is the inverse operator of $\dexp_C^{-1}$ and it denotes the derivative of exponential map.
    Thus, the above reduces to
    \begin{align*}
        dX(\tau) & =-\exp(-\hat\phi_R(\tau,t)) \hat\omega(\tau) R(\tau) + \exp(-\hat\phi_R(\tau,t))  dR(\tau),
    \end{align*}
    which becomes \eqref{eqn:SDE_XR} after substituting \eqref{eqn:dR_w}.
\end{proof}

This implies that the stochastic differential equation \eqref{eqn:dR_w} for the attitude kinematics can be transformed into \eqref{eqn:SDE_XR} that has only the diffusion term. 
As such, we can apply the Magnus expansion to \eqref{eqn:SDE_XR} to obtain the solution $X(t)$, which is converted back to the solution $R(t)$ of \eqref{eqn:dR_w}.

\begin{prop}
    An approximate solution of \eqref{eqn:dR_w} is given by
\begin{align}
    R(\tau ) & = \exp(\phi_R(\tau, t))  \{ I_{3\times 3} + \hat q_1(\tau, t) + \hat q_2(\tau, t) + \frac{1}{2}\hat q_1(\tau, t)^2 \} \nonumber \\
             &\quad \times R(t) + \mathcal{O}((\tau -t)^{1.5}), \label{eqn:R_tau_R}
\end{align}
where $\phi_R(\tau, t)$ is defined at \eqref{eqn:phi_R}, and $q_1(\tau,t),q_2(\tau,t)\in\Re^3$ are
\begin{align}
    q_1(\tau, t) & = \int_{t}^\tau \exp(-\hat\phi_R(\sigma,t)) H(\sigma) dW(\sigma),\label{eqn:q1} \\
    q_2(\tau, t) & = -\frac{1}{2} \int_{t}^\tau \int_{t}^\sigma \exp(-\hat\phi_R(\varsigma,t))H(\varsigma)dW(\varsigma) \nonumber \\
                 & \quad \times \exp(-\hat\phi_R(\sigma,t)) H(\sigma) dW(\sigma).\label{eqn:q2}
\end{align}
\end{prop}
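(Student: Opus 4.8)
The plan is to apply the Magnus expansion \eqref{eqn:X}--\eqref{eqn:Magnus_B} to the transformed equation \eqref{eqn:SDE_XR}, which carries no drift term, and then to transform the resulting solution back to $R(\tau)$ through \eqref{eqn:X_R}. Since \eqref{eqn:SDE_XR} is interpreted in the Stratonovich sense, ordinary calculus applies and the deterministic Magnus framework carries over with the deterministic increment $A(\sigma)\,d\sigma$ replaced by the stochastic increment $A(\sigma)=\{\exp(-\hat\phi_R(\sigma,t))H(\sigma)\,dW(\sigma)\}^\wedge$. Writing $u(\sigma)=\exp(-\hat\phi_R(\sigma,t))H(\sigma)\,dW(\sigma)\in\Re^3$ so that $A(\sigma)=\hat u(\sigma)$, the Magnus solution reads $X(\tau)=\exp(C(\tau))\,R(t)$ with $C(\tau)=C_1(\tau)+C_2(\tau)+\mathcal{O}(\cdots)$, using $X(t)=R(t)$.

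First I would identify the two leading Magnus terms with the hat maps of $q_1$ and $q_2$. By linearity of the hat map, $C_1(\tau)=\int_t^\tau \hat u(\sigma)=\hat q_1(\tau,t)$, which is exactly \eqref{eqn:q1}. For the second term, $C_2(\tau)=-\frac{1}{2}\int_t^\tau \bracket{\int_t^\sigma \hat u(\varsigma),\hat u(\sigma)}$, and applying the commutator identity $[\hat a,\hat b]=\widehat{a\times b}$ pulls the bracket inside a single hat map, giving $C_2(\tau)=\hat q_2(\tau,t)$ with $q_2$ as in \eqref{eqn:q2}.

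The crux of the argument is the stochastic order counting, and this is exactly where the proof departs from the deterministic case. Because $W$ is a Wiener process, $dW\sim\sqrt{d\tau}$, so a $k$-fold iterated stochastic integral scales as $(\tau-t)^{k/2}$; hence $q_1=\mathcal{O}((\tau-t)^{1/2})$ while $q_2=\mathcal{O}(\tau-t)$. Consequently, in contrast to the deterministic expansion, the quadratic term $\frac{1}{2}\hat q_1^2$ is of the same order $(\tau-t)$ as $\hat q_2$ and must be retained. I would then show that $C_3$ and all higher Magnus terms are $\mathcal{O}((\tau-t)^{3/2})$, so that $C(\tau)=\hat q_1+\hat q_2+\mathcal{O}((\tau-t)^{1.5})$, and expand $\exp(C(\tau))=I_{3\times 3}+\hat q_1+\hat q_2+\frac{1}{2}\hat q_1^2+\mathcal{O}((\tau-t)^{1.5})$, where the cross terms $\hat q_1\hat q_2+\hat q_2\hat q_1$ and $\hat q_2^2$ are discarded as $\mathcal{O}((\tau-t)^{1.5})$ or smaller.

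Finally, substituting $R(\tau)=\exp(\hat\phi_R(\tau,t))X(\tau)$ from \eqref{eqn:X_R} yields \eqref{eqn:R_tau_R}. The main obstacle is making the order bookkeeping rigorous: one must justify that the truncated iterated stochastic integrals are genuinely $\mathcal{O}((\tau-t)^{1.5})$ in the appropriate (e.g.\ mean-square) sense, verify that retaining precisely $\{I_{3\times 3},\,\hat q_1,\,\hat q_2,\,\tfrac{1}{2}\hat q_1^2\}$ captures every contribution of order up to $(\tau-t)$, and confirm that the Stratonovich interpretation ensures no additional It\^o correction term enters the expansion.
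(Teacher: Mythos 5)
Your proposal is correct and follows essentially the same route as the paper: apply the Magnus expansion to the drift-free Stratonovich equation \eqref{eqn:SDE_XR}, identify the first two Magnus terms with $\hat q_1$ and $\hat q_2$, expand the matrix exponential while retaining $\frac{1}{2}\hat q_1^2$ because $dW\sim\sqrt{d\tau}$ makes it the same order as $\hat q_2$, and transform back via \eqref{eqn:X_R}. In fact, your order-counting discussion is more explicit than the paper's own two-sentence proof.
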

\begin{proof}
    As \eqref{eqn:SDE_XR} is formulated in the Stratonovich sense, it is compatible with the usual calculus. 
    Applying the Magnus expansion with $A(t) = \{ \exp(-\hat\phi_R(\tau, t)) H(\tau) dW \}^\wedge$, from \eqref{eqn:X} and \eqref{eqn:Magnus_B},
    \begin{align*}
        X(\tau) = \exp (\hat q_1(\tau, t) +\hat q_2(\tau, t) + \mathcal{O}((\tau -t)^{1.5})) X(t). 
    \end{align*}
    Expanding the exponential map as $\exp Z = I + Z + \frac{1}{2}Z^2 +\cdots$, and applying \eqref{eqn:X_R} with $X(t)=R(t)$ yield  \eqref{eqn:R_tau_R}.
\end{proof}

From the approximate solution \eqref{eqn:R_tau_R}, we can construct the first moment as follows.
By the definition of Wiener process, $\E[dW]=0$ and $E[dW(\tau)dW^T(\sigma)]  = \delta_{\tau,\sigma}I_{3\times 3}  |d\tau| $.
Thus, 
\begin{gather*}
    \E[q_1(\tau, t) ]  = 0,\quad 
    \E[q_2(\tau, t)]  = 0.
\end{gather*}
Next, 
\begin{align}
    \E[q_1(\tau, t) & q_1^T(\tau,t) ] = \int_{t}^\tau\int_{t}^\tau \exp(-\hat\phi_R(\sigma,t)) H(\sigma) \nonumber\\
   &\times \E[ dW(\sigma) dW(\varsigma)^T]  H^T (\varsigma) \exp(\hat\phi_R(\varsigma,t)) \nonumber\\
   & \triangleq G_R(\tau, t). \label{eqn:G_R}
\end{align}
Lastly, using the property of the hat map $\hat x^2 = xx^T - x^T xI_{3\times 3}$ for any $x\in\Re^3$, 
\begin{align*}
    \E[\hat q_1^2] & = \E[q_1q_1^T - q_1^Tq_1 I_{3\times 3}] = G_R(\tau,t) -\trs{G_R(\tau,t)}I_{3\times 3}.
\end{align*}
We obtain \eqref{eqn:ER_tau_R} by taking the mean of \eqref{eqn:R_tau_R} with the above and using the fact that the third order moment of $dW$ is zero.

\subsection{Solution for $\Phi_R(\tau, t)$} \label{app:ER_sim}

\begin{prop}
    Suppose that the angular velocity $\omega(\cdot)$ varies linearly over $[t,\tau]$ from $\omega(t)=\omega_t\in\Re^3$ to $\omega(\tau)=\omega_\tau\in\Re^3$, i.e.,
    \begin{align}
        \omega(\sigma) & = \frac{ (\tau-\sigma)\omega_t + (\sigma-t)\omega_\tau}{\tau-t}.\label{eqn:omega}
    \end{align}
    The corresponding solution $\phi_R(\tau,t)$ of \eqref{eqn:phi_R} is
    \begin{align}
        \phi_R(\tau, t) & =  \frac{1}{2}(\omega_t + \omega_\tau) (t-\tau)	- \frac{1}{12} (\tau-t)^2 \omega_t\times \omega_\tau\nonumber \\
                      & \quad + \mathcal{O}((\tau-t)^3).\label{eqn:phi_omega}
    \end{align}
    Also, the solution $G_R(\tau, t)$ of \eqref{eqn:G} is
    \begin{align}
        G_R(\tau, t) = |\tau-t| H_tH^T_t +\mathcal{O}((\tau-t)^2),\label{eqn:G_omega}
    \end{align}
    where $H_t=H(t)\in\Re^{3\times 3}$.
    These can be substituted into \eqref{eqn:Phi_R} to construct $\Phi_R(\tau,t )$.
    In particular, when $\omega_t=\omega_\tau$, 
    \begin{align*}
        \Phi_R(\tau,t) & = \{ I_{3\times 3} + \frac{1}{2}(\tau-t)(H_t H_t^T - \trs{H_t H_t^T}I_{3\times 3})\}\\
                     & \quad \times\exp(\omega_t(\tau-t)).
    \end{align*}
\end{prop}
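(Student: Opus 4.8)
The plan is to recognize the defining relation \eqref{eqn:phi_R} for $\hat\phi_R$ as the integral form of the Magnus recursion already established in the appendix. Differentiating \eqref{eqn:phi_R} in the upper limit $\tau$ gives $\partial_\tau\hat\phi_R(\tau,t)=\dexp^{-1}_{\hat\phi_R(\tau,t)}\hat\omega(\tau)$, which is precisely \eqref{eqn:C_dot} with $A(\sigma)=\hat\omega(\sigma)$ and the clock started at $t$. I may therefore quote the Picard expansion \eqref{eqn:Magnus_B} verbatim:
\begin{align*}
    \hat\phi_R(\tau,t) = \int_t^\tau\hat\omega(\sigma)\,d\sigma -\frac{1}{2}\int_t^\tau\bracket{\int_t^\sigma\hat\omega(\varsigma)\,d\varsigma,\,\hat\omega(\sigma)}d\sigma +\mathcal{O}((\tau-t)^3).
\end{align*}

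First I would substitute the linear profile \eqref{eqn:omega}. The zeroth-order integral is elementary: since $\int_t^\tau(\tau-\sigma)\,d\sigma=\int_t^\tau(\sigma-t)\,d\sigma=\tfrac{1}{2}(\tau-t)^2$, it equals $\tfrac{1}{2}(\tau-t)(\omega_t+\omega_\tau)$, the leading term of \eqref{eqn:phi_omega}. For the commutator term I would invoke $\bracket{\hat a,\hat b}=\widehat{a\times b}$ to reduce it to $\widehat{(\int_t^\sigma\omega\,d\varsigma)\times\omega(\sigma)}$. Writing $s=\sigma-t$, $T=\tau-t$, and $\Delta=\omega_\tau-\omega_t$, the inner integral is $s\omega_t+\tfrac{s^2}{2T}\Delta$ while $\omega(\sigma)=\omega_t+\tfrac{s}{T}\Delta$; in the cross product the $\omega_t\times\omega_t$ and $\Delta\times\Delta$ pieces vanish and the remaining two combine to $\tfrac{s^2}{2T}\,\omega_t\times\omega_\tau$. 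Integrating $\int_0^T\tfrac{s^2}{2T}\,ds=\tfrac{T^2}{6}$ and multiplying by $-\tfrac{1}{2}$ yields $-\tfrac{1}{12}(\tau-t)^2\,\omega_t\times\omega_\tau$, the second-order term of \eqref{eqn:phi_omega}.

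For $G_R$ in \eqref{eqn:G} I would only need a leading-order expansion of the integrand about $\sigma=t$: as $\phi_R(\sigma,t)=\mathcal{O}(\sigma-t)$, the conjugating exponentials are $I_{3\times 3}+\mathcal{O}(\sigma-t)$ and $H(\sigma)H^T(\sigma)=H_tH_t^T+\mathcal{O}(\sigma-t)$, so the integrand is $H_tH_t^T+\mathcal{O}(\sigma-t)$ and integration over $[t,\tau]$ gives $|\tau-t|H_tH_t^T+\mathcal{O}((\tau-t)^2)$, i.e.\ \eqref{eqn:G_omega}. Substituting \eqref{eqn:phi_omega} and \eqref{eqn:G_omega} into \eqref{eqn:Phi_R} then assembles $\Phi_R$. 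In the constant case $\omega_t=\omega_\tau$ the cross-product term drops and the Magnus series terminates, so $\hat\phi_R(\tau,t)=(\tau-t)\hat\omega_t$ exactly and $\tfrac{1}{2}(G_R-\trs{G_R}I_{3\times 3})=\tfrac{1}{2}(\tau-t)(H_tH_t^T-\trs{H_tH_t^T}I_{3\times 3})$; by \eqref{eqn:Phi_R} this gives $\Phi_R(\tau,t)=\exp(\hat\omega_t(\tau-t))\braces{I_{3\times 3}+\tfrac{1}{2}(\tau-t)(H_tH_t^T-\trs{H_tH_t^T}I_{3\times 3})}$, the claimed closed form.

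I expect the only delicate point to be the second-order commutator integral: one must retain the linear correction in both $\int_t^\sigma\omega$ and in $\omega(\sigma)$, since discarding either (for instance approximating $\omega(\sigma)\approx\omega_t$) produces $-\tfrac{1}{6}$ rather than $-\tfrac{1}{12}$ for the cross-product coefficient. Everything else is a routine expansion in $T=\tau-t$. I would also note that the leading term of \eqref{eqn:phi_omega} must carry the factor $(\tau-t)$, as only then does $\exp(\hat\phi_R)$ reduce to $\exp(\hat\omega_t(\tau-t))$ in the constant-velocity limit, consistent with the final display.
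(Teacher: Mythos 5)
Your proof is correct and follows the same route as the paper: substitute the linear profile into the Magnus expansion \eqref{eqn:Magnus_B} and integrate term by term, then expand the integrand of \eqref{eqn:G} about $\sigma=t$; the paper merely states these steps without carrying out the integrals, so your computation (in particular the bookkeeping of both linear corrections in the commutator term that produces the $-\tfrac{1}{12}$ coefficient) supplies exactly what the paper omits. Your remark that the leading term must be $+\tfrac{1}{2}(\omega_t+\omega_\tau)(\tau-t)$ is also right --- the factor $(t-\tau)$ in \eqref{eqn:phi_omega} as printed is a sign typo, as the constant-velocity limit and \eqref{eqn:Phi_R} confirm.
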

\begin{proof}
	Equation \eqref{eqn:phi_omega} can be obtained by substituting \eqref{eqn:omega} into \eqref{eqn:Magnus_B} and completing the integration.
	Since $\E[R(\tau)]$ is a second order approximation, the same order approximation to $G(t,\tau)$ is
	\begin{align*}
	G_R(\tau,t) & = |\tau -t| \exp(\hat\phi_R(t,t)) H_t H_t^T \exp(-\hat\phi_R(t,t))\\
	& \quad + \mathcal{O}((t-\tau)^2).
	\end{align*}
	As $\phi(t,t)=0$, this reduces to \eqref{eqn:G_omega}.
\end{proof}

\subsection{Deterministic Attitude Observability} \label{app:deterministic}

In this appendix, attitude observability with single direction measurements in deterministic sense is studied, and the results are consistent with Theorem \ref{thm:non-estimable} and Theorem \ref{thm:observable}.
The proof is based on Theorem 3.1 and Theorem 3.12 in \cite{hermann1977nonlinear}, and notations therein.

\begin{theorem}
	Let the deterministic inertial and body-fixed direction measurements be
	\begin{align}
		x &= R^Ta, \label{eqn:x(R)} \\
		y &= Rb, \label{eqn:y(R)}
	\end{align}
	respectively, where $a,b\in\Sph^2$ are reference vectors.
	Then the system \eqref{eqn:R_dot_w} and \eqref{eqn:x(R)} is weakly locally observable, the system \eqref{eqn:R_dot_w} and \eqref{eqn:y(R)} is unobservable.
\end{theorem}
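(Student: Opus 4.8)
The plan is to apply the nonlinear observability rank condition of Hermann and Krener, treating $\omega$ as a control input and building the observation space from the measurement together with its iterated Lie derivatives along the vector fields $f_\omega(R) = \hat\omega R$ obtained by freezing the control at a constant $\omega$. Throughout, I would parametrize tangent vectors at $R\in\SO$ through inertial-frame perturbations $\delta R = \hat\xi R$ with $\xi\in\Re^3$, so that the differential of any function on $\SO$ becomes a linear map in $\xi$, and weak local observability reduces to showing that the collected differentials annihilate only $\xi = 0$.

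For the first system \eqref{eqn:R_dot_w}+\eqref{eqn:x(R)} with $h(R) = R^T a$, I would first compute $\delta h = R^T\hat a\,\xi$, whose kernel is exactly $\xi\parallel a$; this reproduces the classical one-dimensional ambiguity of a single static measurement, namely the rotation about $a$ in the inertial frame. The decisive step is the first Lie derivative $L_{f_\omega}h(R) = -R^T\hat\omega a$, whose differential is $\delta(L_{f_\omega}h) = R^T\hat\xi\hat\omega a$. Restricting to the residual direction $\xi = a$ and using the vector triple product gives $R^T\bigl(\omega - (a^T\omega)a\bigr)$, which is nonzero precisely when $\omega\times a\neq 0$. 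Hence for any constant control with $\omega\times a\neq 0$ the differentials of $\{h, L_{f_\omega}h\}$ have trivial common kernel, the observation codistribution attains full rank $3 = \dim\SO$, and Theorem~3.12 of \cite{hermann1977nonlinear} yields weak local observability.

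For the second system \eqref{eqn:R_dot_w}+\eqref{eqn:y(R)} with $h(R) = Rb$, rather than exhibiting a rank deficiency I would give the stronger, fully global obstruction directly: fix $\theta\in\Re$ and, for any solution $R(\cdot)$ of $\dot R = \hat\omega R$, set $R'(t) = R(t)\exp(\theta\hat b)$. Since right multiplication by the constant $\exp(\theta\hat b)$ commutes with the left action of $\hat\omega$, we get $\dot R' = \hat\omega R'$, so $R'$ solves the same equation under the identical input $\omega(\cdot)$; moreover $y'(t) = R'(t)b = R(t)\exp(\theta\hat b)b = R(t)b = y(t)$ because $\exp(\theta\hat b)b = b$. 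Thus $R(0)$ and $R(0)\exp(\theta\hat b)$ are indistinguishable for every admissible input and every $\theta$, exhibiting a one-parameter family of states indistinguishable from any given attitude; by the definitions underlying Theorem~3.1 of \cite{hermann1977nonlinear} the system is unobservable, with the unobservable direction being the rotation about the body-fixed axis $b$.

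The main obstacle I anticipate is the careful bookkeeping on the manifold in the first part: one must respect the unit-norm constraint so that $dh$ contributes rank two rather than three, correctly isolate the single residual direction $\xi\parallel a$, and verify that the first Lie derivative genuinely fills it in. A secondary point is that Theorem~3.12 certifies only \emph{weak local} observability, so the existence of a constant control with $\omega\times a\neq 0$ must be invoked; this is exactly the deterministic counterpart of the hypothesis $\omega_{k_0}\times a\neq 0$ in \Cref{thm:observable}, and it matches the row of \Cref{table:observability} in which the inertial-frame angular velocity rotates the axis of ambiguity.
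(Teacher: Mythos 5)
Your proposal is correct. For the first system your argument is essentially the paper's: both apply the Hermann--Krener observability rank condition, computing first-order Lie derivatives of $h(R)=R^Ta$ along constant-control vector fields $\hat\omega R$ and checking that the resulting codistribution has dimension three. The only cosmetic difference is that the paper fixes $a=e_1$ and uses the differentials of $L_{\hat e_2R}x$ and $L_{\hat e_3R}x$ alone (never needing $dh$ itself), whereas you combine $dh$ (rank two, kernel $\xi\parallel a$) with $d(L_{f_\omega}h)$ for a single $\omega$ with $\omega\times a\neq 0$; your computation $R^T\hat a\hat\omega a = R^T(\omega-(a^T\omega)a)$ correctly fills the residual direction, and your remark that one only needs the \emph{existence} of such a control is the right reading of weak local observability.

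For the second system you genuinely depart from the paper, and to your advantage. The paper computes second-order Lie derivatives $L_{\hat\eta_1R}L_{\hat\eta_2R}y=\hat\eta_1\hat\eta_2Rb$, shows the observation codistribution has rank two because $\widehat{(R_0)_1}$ has rank two, and then must combine this rank deficiency with local controllability and the Hermann--Krener theorems to upgrade "rank condition fails" to "unobservable." Your argument bypasses all of that: the symmetry $R\mapsto R\exp(\theta\hat b)$ commutes with the left-invariant dynamics $\dot R=\hat\omega R$ for \emph{every} input and fixes the output since $\exp(\theta\hat b)b=b$, so $R(0)$ and $R(0)\exp(\theta\hat b)$ are indistinguishable globally. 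This exhibits the unobservable one-parameter family explicitly, is independent of any rank computation or controllability hypothesis, and directly identifies the unobserved degree of freedom as the rotation about the body-fixed axis $b$ --- consistent with the stochastic analysis in \Cref{thm:non-estimable} and with \Cref{table:observability}. The paper's route has the mild virtue of staying uniformly within the Lie-derivative formalism for both halves and of exhibiting the exact rank (two) of what \emph{is} observable; your route is shorter and proves the stronger statement that no input whatsoever can separate the two states.
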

\begin{proof}
	Without loss of generality, we assume $a = b = \begin{bmatrix} 1 & 0 & 0 \end{bmatrix}^T$.
	By definition, both systems are locally controllable.
	For any $R\in\SO$ and $\hat{\eta}\in\so$, the Lie derivative of $x(R)$ along $\hat{\eta}R$ is
	\begin{align}
		(L_{(\hat{\eta}R)}x)(R) = \frac{d}{dt} \Big\lvert_{t=0} R^T\exp(t\hat{\eta})^Ta = R^T\hat{\eta}^Ta.
	\end{align}
	Note that $\{L_{(\hat{e}_1R)}x, L_{(\hat{e}_2R)}x, L_{(\hat{e}_3R)}x\} \subset \mathcal{G}$.
	For any $R_0\in\SO$, define a local coordinate as $R(\theta) = \exp(\theta_1\hat{e}_1+\theta_2\hat{e}_2+\theta_3\hat{e}_3)R_0$, then $dR = \widehat{d\theta}R_0$, and
	\begin{align*}
		d(L_{\hat{e}_2R_0}x)(R_0) &= R_0^T \begin{bmatrix} -d\theta_2 & d\theta_1 & 0 \end{bmatrix}^T, \\
		d(L_{\hat{e}_3R_0}x)(R_0) &= R_0^T \begin{bmatrix} -d\theta_3 & 0 & d\theta_1 \end{bmatrix}^T.
	\end{align*}
	This indicates the dimension of $d\mathcal{G}(R_0)$ is three, and therefore the system \eqref{eqn:R_dot_w} and \eqref{eqn:x(R)} is weakly locally observable.
	
	On the other hand, for any $R\in\SO$ and $\hat{\eta}_1, \hat{\eta}_2\in\so$, the Lie derivative of $y(R)$ is
	\begin{align}
		\big(L_{(\hat{\eta}_1R)}(L_{(\hat{\eta}_2R)}y)\big)(R) = \hat{\eta}_1\hat{\eta}_2Rb.
	\end{align}
	And
	\begin{align*}
		d\big(L_{(\hat{\eta}_1R)}(L_{(\hat{\eta}_2R)}y)\big)(R_0) = \hat{\eta}_1\hat{\eta}_2\widehat{d\theta}(R_0)_1 = -\hat{\eta}_1\hat{\eta}_2 \widehat{(R_0)_1} d\theta,
	\end{align*}
	where $(R_0)_1$ is the first column of $R_0$.
	Then the dimension of $d\mathcal{G}(R_0)$ is two since the rank of $\widehat{(R_0)_1}$ is two.
	Because the system \eqref{eqn:R_dot_w} and \eqref{eqn:y(R)} is locally controllable, it is unobservable.
\end{proof}




\vspace{-1cm}
\begin{IEEEbiographynophoto}{Weixin Wang}
	received his M.S. degree at University of Wisconsin-Madison, WI, USA, in 2018, and the B.E. degree at Tsinghua University, Beijing, China, in 2016, both in mechanical engineering.
	He is currently pursuing his Ph.D. degree at George Washington University.
	His research interests include nonlinear estimation theory, inertial sensors, and human movement tracking.
\end{IEEEbiographynophoto}

\vspace{-1cm}
\begin{IEEEbiographynophoto}{Kanishke Gamagedara}
	is currently pursuing his Ph.D. degree at the George Washington University, where he also received his master's degree in Mechanical and Aerospace Engineering in 2018.
	He received his bachelor's degree at University of Peradeniya, Sri Lanka, in Mechanical Engineering in 2015.
	His research interests are mainly focused on geometric control of unmanned aerial systems.
\end{IEEEbiographynophoto}

\vspace{-1cm}
\begin{IEEEbiographynophoto}{Taeyoung Lee}
	is an associate professor of the Department of Mechanical and Aerospace Engineering at the George Washington University. He received his doctoral degree in Aerospace Engineering and his master's degree in Mathematics at the University of Michigan in 2008. His research interests include geometric mechanics and control with applications to complex aerospace systems. 
\end{IEEEbiographynophoto}

\end{document}